\newtheorem{assumption}{Assumption}
\begin{document}

\dosecttoc
\faketableofcontents

\title{Entropic Fictitious Play\\for Mean Field Optimization Problem}

\author{\name Fan Chen \email alexnwish@sjtu.edu.cn \\
\addr School of Mathematical Sciences, Shanghai Jiao Tong University \\
Shanghai, China
\AND
\name Zhenjie Ren \email ren@ceremade.dauphine.fr \\
\addr CEREMADE, Université Paris-Dauphine, PSL \\
Paris, France
\AND
\name Songbo Wang \email songbo.wang@polytechnique.edu \\
\addr CMAP, CNRS, École polytechnique, Institut Polytechnique de Paris \\
Palaiseau, France
}

\maketitle

\begin{abstract}%
We study two-layer neural networks in the mean field limit,
where the number of neurons tends to infinity.
In this regime,
the optimization over the neuron parameters becomes the optimization
over the probability measures,
and by adding an entropic regularizer,
the minimizer of the problem is identified as a fixed point.
We propose a novel training algorithm named entropic fictitious play,
inspired by the classical fictitious play in game theory
for learning Nash equilibriums, to recover this fixed point,
and the algorithm exhibits a two-loop iteration structure.
Exponential convergence is proved in this paper
and we also verify our theoretical results by simple numerical examples.
\end{abstract}

\begin{keywords}
mean field optimization, neural network, fictitious play,
relative entropy, free energy function
\end{keywords}

\section{Introduction}
\label{section:introduction}

Deep learning has achieved unprecedented success in numerous practical scenarios,
including computer vision, natural language processing
and even autonomous driving,
which leverages deep reinforcement learning techniques
\citep{Krizhevsky2012ImageNetCW,Goldberg2016APO, Arulkumaran2017DeepRL}.
Stochastic gradient algorithms (SGD) and their variants have been widely used
to train neural networks, that is, to minimize networks' loss
and thereby to fit the data available effectively
\citep{LeCun1998GradientbasedLA, Kingma2015AdamAM}.
However, due to the complicated network structures
and the non-convexity of typical optimization objectives,
mathematical guarantees of convergence to the optimizer remain elusive.
Recent studies on the insensibility of the number of neurons on one layer
when it is sufficiently large \citep{Hastie2022SurprisesIH},
and the feasibility of interchanging the neurons on one layer
\citep{Nguyen2020ARF, rotskoff2018neural}
both motivated the investigation of mean field regime.
In practice, over-parameterized neural networks
with a large number of neurons
are commonly employed in order to achieve high performance
\citep{Huang2017DenselyCC}.
This further motivates researchers to view neurons as random variables
following a probability distribution
and the summation over neurons
as an expectation with respect to this distribution
\citep{Sirignano2020MeanFA}.

Another appealing approach to address the global convergence
of over-parameterized networks is
through the \emph{neural tangent kernel} (NTK) regime \citep{Jacot2018NeuralTK}.
In this regime,
it is believed that
when the network width tends to infinity,
the parameter updates, driven by stochastic gradient descent,
do not significantly deviate
from i.i.d Gaussian initialization,
and these updates are called lazy training \citep{Tzen2020AMT, Chizat2019OnLT}.
As a result, training of neural networks can be depicted as regression with a fixed kernel given by linearization at initialization,
leading to the exponential convergence \citep{Jacot2018NeuralTK}.
By appropriate time rescaling,
it is possible for the dynamics of the kernel method
to track the SGD dynamics closely
\citep{mei2019mean, AllenZhu2019LearningAG}.
Other studies, such as \cite{Dou2020TrainingNN},
explore the reproducing kernel Hilbert space
and demonstrate that the gradient flow indeed converges
to the kernel ridgeless regression with an adaptive kernel.
Besides in \cite{Chen2020AGN}, the researchers extend the definition of the kernel
and show that the training with an appropriate regularizer
also exhibits behaviors similar to the kernel method.
However, the kernel behavior primarily manifests
during the early stages of the training process,
whereas the mean field model reveals and explains
the longer-term characteristics \citep{mei2019mean}.
Furthermore, another advantage of the mean field settings
compared to NTK is the presence of feature learning,
in contrast to the perspective of random feature
\citep{Suzuki2019AdaptivityOD, Ghorbani2019LimitationsOL}.

In the mean field limit where neurons become infinitely many,
the dynamics of the neuron parameters under gradient descent
can be understood as a gradient flow of measures in Wasserstein-\(2\) space,
providing a geometric interpretation of the learning algorithm.
This flow is also described by a PDE system
where the unknown is the density function of the measure.
Well-posedness of the PDE system, discretization errors
and finite-time propagation of chaos
are studied in recent works
\citep{Nguyen2020ARF, mei2019mean, pmlr-v134-fang21a,Araujo2019AML,Sirignano2022MeanFA}.
On the other hand, extensive analysis has been conducted
to investigate the convergence of such dynamics to their equilibrium.
The convergence of gradient flows modeling shallow networks
is studied in \cite{chizat2018global, mei2019mean, HRSS19};
more recent works extend the gradient-flow formulation
and study deep network structures \citep{pmlr-v134-fang21a,Nguyen2020ARF}.
Sufficient conditions for the convergence under non-convex loss functions
have been given in \cite{Nguyen2020ARF},
and the discriminatory properties of the non-linear activation function
have been exploited in
\cite{Sirignano2022MeanFA, rotskoff2018neural} to deduce the convergence.

In this paper, one key assumption is the convexity of the objective functional
with respect to its measure-valued argument.
This assumption has been exploited by many recent works.
Notably, \cite{Nitanda2022ConvexAO}
have established the exponential convergence of the entropy-regularized problem
in both discrete and continuous-time settings
by utilizing the log-Sobolev inequality (LSI),
following the observations in \cite{Nitanda2021ParticleDA}.
Additionally, \cite{Nitanda2017StochasticPG} estimate the generalization error
and prove a polynomial convergence rate by leveraging quadratic expansions
of the loss function.
\cite{Wei2019RegularizationMG} also prove polynomial convergence rates
in different scenarios,
where they add noise to the gradient descent and
assume the activation and regularization functions are homogeneous.

With the existing convergence results on gradient flows
for the mean field optimization problem in mind,
the following question arises to us:
\begin{quote}
\centering
\textit{Do there exist dynamics
other than gradient flows\\
that solve the (regularized) mean field optimization efficiently?}
\end{quote}
We believe the quest for its answer will not be wasted efforts,
as it may lead to potentially highly performant algorithms
for training neural networks,
and also because the dynamics similar to that we consider in this paper
have already found applications to various mean field problems.

We recall the classical fictitious play in game theory
originally introduced by Brown \cite{Brown51} to learn Nash equilibriums.
During the fictitious play,
in each round of repeated games,
each player optimally responds to the empirical frequency of actions
taken by their opponents (hence the name).
While the fictitious play does not necessarily converge in general cases
\citep{Shapley64},
it does converge for zero-sum games \citep{Robinson51}
and potential games \citep{MS96}.
More recently, this method has been revisited
in the context of mean field games
\citep{CH17,HS19,PPLGEP20,lavigne2022generalized}.

In this paper, we draw inspiration from the classical fictitious play
and propose a similar algorithm,
called \emph{entropic fictitious play} (EFP),
to solve mean field optimization problems
emerging from the training of two-layer neural networks.
Our algorithm shares a two-loop iteration structure
with the particle dual average (PDA) algorithm,
recently proposed by \cite{Nitanda2021ParticleDA}.
They estimated the computational complexity
and conducted various numerical experiments for PDA
to show its effectiveness in solving regularized mean field problems.
However, PDA is essentially different
from our EFP algorithm and their differences will be discussed in
Sections~\ref{sec:problemsetting} and \ref{sec:numerical}.

\section{Problem Setting}
\label{sec:problemsetting}

Let us first recall how the (convex) mean field optimization problem emerges
from the training of two-layer neural networks.
While the universal representation theorem
tells us that a two-layer network can arbitrarily well approximate
the continuous function on the compact time interval
\citep{cybenko1989approximation,barron1993universal},
it does not tell us how to find the optimal parameters.
One is faced with the non-convex optimization problem
\begin{equation}
\label{intro:min0}
\min_{\beta_{n,i}\in\mathbb R, \alpha_{n,i}\in \mathbb R^{d}, \gamma_{n,i}\in \mathbb R}
\int_{\mathbb R\times\mathbb R^{d}}
\ell\biggl( y , \frac1n\sum_{i=1}^n \beta_{n,i} \varphi (\alpha_{n,i}\cdot z + \gamma_{n,i}) \biggr)
\,\nu(dy\,dz),
\end{equation}
where \(\theta \mapsto \ell(y, \theta)\) is convex for every \(y\),
\(\varphi:\mathbb R\rightarrow\mathbb R\)
is a bounded, continuous and non-constant activation function,
and \(\nu\) is a measure of compact support representing the data.
Denote the empirical law of the parameters \(m^n\) by
\(m^n = \frac1n\sum_{i=1}^{n} \delta_{( \beta_{n,i},
\alpha_{n,i}, \gamma_{n,i} )}\).
Then the neural network output can be written by
\[
\frac1n\sum_{i=1}^n \beta_{n,i} \varphi(\alpha_{n,i}\cdot z+\gamma_{n,i})
= \int_{\mathbb R^{d+2}} \beta \varphi(\alpha\cdot z +\gamma )\,m^n(d\beta\,d\alpha\,d\gamma).
\]
For technical reasons we may introduce a truncation function \(h(\cdot)\)
whose parameter is denoted by \(\beta\) as in \cite{HRSS19}.
To ease the notation we denote \(x=(\beta,\alpha, \gamma)\in \mathbb R^{d+2}\)
and \(\hat \varphi(x,z) = h(\beta)\varphi(\alpha \cdot z+\gamma)\).
Denote also by \(\mathbb E^m = \mathbb E^{X\sim m}\)
the expectation of the random variable \(X\) of law \(m\).
Now we relax the original problem \eqref{intro:min0}
and study the mean field optimization problem
over the probability measures,
\begin{equation}\label{eq:intro_F_nn}
\min_{m\in \mathcal{P}(\mathbb{R}^d)} F(m),
\quad\text{where}~
F(m)\coloneqq \int_{\mathbb R^d} \ell\bigr( y , \mathbb E^m[ \hat \varphi(X,z)] \bigl)
\,\nu(dy\,dz)
\end{equation}
This reformulation is crucial, because the \emph{potential} functional \(F\)
defined above is convex in the space of probability measure.
In this paper, as in \cite{HRSS19,mei2018mean},
we shall add a relative entropy term
\(H(m|g)\coloneqq \int_{x \in \mathbb R^d} \log \frac{dm}{dg} (x)\,m(dx)\)
in order to regularize the problem.
The regularized problem then reads
\begin{equation}\label{intro:min}
\min_{m\in \mathcal{P}(\mathbb{R}^d)} V^\sigma(m),
\quad\text{where}~V^\sigma(m)\coloneqq F(m) + \frac{\sigma^2}{2} H(m|g).
\end{equation}
Here we choose the probability measure \(g\)
to be a Gibbs measure with energy function \(U\),
that is, the density of \(g\) satisfies \(g(x) \propto \exp\bigl(-U(x)\bigr)\).
It is worth noting that if a probability measure has finite entropy
relative to the Gibbs measure \(g\),
then it is absolutely continuous with respect to the Lebesgue measure.
Hence the density of \(m\) exists whenever \(V^\sigma (m)\) is finite.
In the following, we will abuse the notation
and use the same letter to denote the density function of \(m\).

Since \(F\) is convex, together with mild conditions,
the first-order condition says that
\(m^*\) is a minimizer of \(V^\sigma\) if and only if
\begin{equation}\label{eq:intro-foc}
\frac{\delta F}{\delta m} (m^*, x) + \frac{\sigma^2}{2} \log m^*(x) + \frac{\sigma^2}{2} U(x)
= \text{constant},
\end{equation}
where \(\frac{\delta F}{\delta m}\) is the linear derivative,
whose definition is postponed to
Assumption~\ref{assum:wellposedness} below.
Further, note that \(m^*\) satisfying \eqref{eq:intro-foc} must be
an invariant measure to the so-called mean field Langevin (MFL) diffusion:
\[
dX_t = -\biggl(\nabla_x \frac{\delta F}{\delta m} (m_t, X_t)
+ \frac{\sigma^2}{2} \nabla_x U(X_t) \biggr)\,dt + \sigma\,dW_t,
\quad\text{where}~m_t \coloneqq \Law(X_t).
\]
In \cite{HRSS19} it has been shown that
the MFL marginal law \(m_t\) converges towards \(m^*\),
and this provides an algorithm to approximate the minimizer \(m^*\).

The starting point of our new algorithm is to view the first-order condition
\eqref{eq:intro-foc} as a fixed pointed problem.
Given \(m \in \mathcal{P}(\mathbb{R}^d)\),
let \(\Phi(m)\) be the probability measure such that
\begin{equation}\label{eq:intro-fixedpoint}
\frac{\delta F}{\delta m} (m, x) + \frac{\sigma^2}{2} \log \Phi(m)(x) + \frac{\sigma^2}{2} U(x) = \text{constant}.
\end{equation}
By definition, a probability measure
\(m\) satisfies the first-order condition \eqref{eq:intro-foc}
if and only if \(m\) is a fixed point of \(\Phi\).
Throughout the paper we shall assume that there exists at most
one probability measure satisfying the first-order condition
(equivalently, there exists at most one fixed point for \(\Phi\)).
This is true when the objective functional \(F\) is convex.
Indeed, as the relative entropy \(m\mapsto H(m|g)\) is strictly convex,
the free energy
\(V^\sigma = F + \frac{\sigma^2}{2}H(\cdot|g)\)
is also strictly convex and therefore admits at most one minimizer.

It remains to construct an algorithm to find the fixed point.
Observe that \(\Phi(m)\) defined in \eqref{eq:intro-fixedpoint}
satisfies formally
\begin{equation}
\label{eq:intro-variation}
\Phi(m) = \argmin_{\mu\in \mathcal P(\mathbb{R}^d)}
\mathbb E^{X\sim \mu} \biggl[\frac{\delta F}{\delta m} (m, X) \biggr]
+ \frac{\sigma^2}{2} H(\mu|g),
\end{equation}
that is, the mapping \(\Phi\) is given by the solution to a variational problem,
similar to the definition of Nash equilibrium.
This suggests that we can adapt the classical fictitious play algorithm
to approach the minimizer.
In this context, \(\Phi(m_t)\) is the ``best response'' to \(m_t\)
in the sense of \eqref{eq:intro-variation},
and we define the evolution of the ``empirical frequency''
of the player's actions by
\begin{equation}\label{eq:fictitious-intro}
d m_t = \alpha\,\bigl(\Phi(m_t) - m_t \bigr)\,dt,
\end{equation}
where \(\alpha\) is a positive constant
and should be understood as the learning rate.
The Duhamel's formula for this equation reads
\[
m_t = \int_0^t \alpha e^{-\alpha (t - s)}\,\Phi (m_s)\,ds + e^{-\alpha t}\,m_0,
\]
so \(m_t\) is indeed a \emph{weighted} empirical frequency
of the previous actions \(m_0\) and \(\bigl(\Phi(m_s)\bigr)_{s\leq t}\).

We propose a numerical scheme corresponding to the entropic fictitious play
described informally in Algorithm~\ref{alg:efp-informal},
which consists of inner and outer iterations.
The inner iteration,
described later in Algorithm~\ref{alg:efp-formal} for a specific example,
calculates an approximation of \(\Phi(m_t)\) given the measure \(m_t\).
Note that we are sampling a classical Gibbs measure
so various Monte Carlo methods can be used.
The outer iterations let the measure evolve following
the entropic fictitious play \eqref{eq:fictitious-intro}
with a chosen time step \(\Delta t\).

\begin{algorithm}
\label{alg:efp-informal}
\caption{Entropic fictitious play algorithm}
\LinesNumbered
\KwIn{objective functional \(F\),
reference measure \(g \propto \exp (-U)\),
initial distribution \(m_0\),
time step \(\Delta t\),
interation times \(T\).}
\For{\rm \(t = 0\), \(\Delta t\), \(2\Delta t\), \(\ldots\,\),
\(T - \Delta t\)}{
\tcp{Inner iteration}
Sample \(\Phi(m_{t+\Delta t}) \propto
\exp \bigl(-\frac{\delta F}{\delta m}(m_t,x) -\frac{\sigma^2}{2} U(x) \bigr)\)
by Monte Carlo; \\
\tcp{Outer iteration}
Update \(m_{t+\Delta t} \leftarrow (1 - \alpha \Delta t)\,m_t
+ \alpha \Delta t\,\Phi(m_n)\); \\
}
\KwOut{distribution \(m_T\).}
\end{algorithm}

\subsection{Related Works}

\subsubsection{Mean Field Optimization}

In contrast to the entropy-regularized mean field optimization
addressed by our EFP algorithm,
the unregularized optimization has also been studied in recent works
\citep{chizat2018global,rotskoff2018neural,Sirignano2022MeanFA}.
\cite{pmlr-v134-fang21a} developed a mean field framework
that captures the feature evolution during multi-layer networks' training
and analyze the global convergence
for fully-connected neural networks and residual networks,
introduced by \cite{he2016deep}.
Deep network settings have also been studied in
\cite{Sirignano2022MeanFA,nguyen2019mean,
Araujo2019AML,pham2021global,Nguyen2020ARF}.

\subsubsection{Exponential Convergence Rate}

The exponential convergence rate of the mean field Langevin dynamics
has been shown in \cite{Nitanda2022ConvexAO}
by exploiting the log-Sobolev inequality,
which critically relies on the non-vanishing entropic regularization.
On the other hand, \cite{Chizat2022MeanFieldLD} has studied
the annealed mean field Langevin dynamics,
where the time steps decay following an \(O\bigl((\log t)^{-1}\bigr)\) trend,
and has shown the convergence
towards the minimizer of the unregularized objective functional.
In this paper, we will also prove an exponential convergence rate
for our EFP algorithm
and the precise statement can be found in Theorem~\ref{thm:exp}.
The convergence rate obtained solely depends on the learning rate,
which can be chosen in a fairly arbitrary way.
This seems to be an improvement over the LSI-dependent rate in
\cite{Nitanda2022ConvexAO, Chizat2022MeanFieldLD}.
However, the arbitrariness is due to the fact that
our theoretical result only addresses
the outer iteration and assumes
that the target measure of inner one can be perfectly sampled
(see Algorithm~\ref{alg:efp-informal}),
and our convergence rate can not be directly compared
to the ones obtained by \cite{Nitanda2022ConvexAO, Chizat2022MeanFieldLD}.
However, the inner iteration aims to sample a Gibbs measure,
which is a classical task
for which various Monte Carlo algorithms are available.
(see Remark~\ref{rem:gibbs sampling}).
Furthermore, we propose a ``warm start'' technique
to alleviate the computational burden of the inner iterations
(see Algorithm~\ref{alg:efp-formal}).

\subsubsection{Particle Dual Averaging}
Our entropic fictitious play algorithm
shares similarities with the particle dual averaging algorithm
introduced in \cite{Nitanda2021ParticleDA}.
PDA is an extension of
regularized dual average studied in
\cite{Nesterov2005SmoothMO,Xiao2009DualAM},
and can be considered the particle version
of the dual averaging method designed
to solve the regularized mean field optimization problem \eqref{intro:min}.
The key feature shared by PDA and EFP
is the two-loop iteration structure.
In the PDA outer iteration, we calculate a moving average \(\tilde f_n\)
of the linear functional derivative
of the objective \(\frac{\delta F}{\delta m}\),
\begin{equation}
\label{eq:PDA-formula}
\tilde f_{n} = (1-\alpha \Delta t)\,
\frac{\delta F}{\delta m}(\tilde m_{n-1},\cdot)
+ \alpha \Delta t\,\frac{\delta F}{\delta m}(\tilde m_{n-1},\cdot);
\end{equation}
the measure \(\tilde m_n\) is on the other hand updated by the inner iteration,
\begin{equation}
\label{eq:PDA-inner}
\tilde m_{n+1}(x) = \argmin_{m \in \mathcal{P}(\mathbb{R}^d)}
\mathbb{E}^m\bigl[\tilde f_{n}(x)\bigr] +\frac{\sigma^2}{2} H(m|g),
\end{equation}
which can be calculated by a Gibbs sampler.
While the PDA inner iteration \eqref{eq:PDA-inner}
is identical to that of EFP,
their outer iterations are distinctly different.
The PDA outer iteration updates the linear derivatives
\(\frac{\delta F}{\delta m} (\tilde m_n, \cdot)\)
by forming a convex combination,
while the EFP outer iteration updates
the measures by a convex combination,
which serves as the first argument
of the linear derivative \(\frac{\delta F}{\delta m}(\cdot,\cdot)\).
One disadvantage of PDA is that
one needs to store the history of measures
\((\tilde m_i^{\vphantom n})_{i=1}^n\) to evaluate \(\tilde f_{n}\) in \eqref{eq:PDA-formula},
which may lead to high memory usage in numerical simulations.
Our EFP algorithm circumvents this numerical difficulty
as the dynamics \eqref{eq:fictitious-intro}
corresponds to a birth-death particle system whose memory usage is bounded
(see discussions in Section~\ref{sec:simulation-efp}).
As a side note, EFP and PDA coincide
when the mapping \(m\mapsto \frac{\delta F}{\delta m}(m, \cdot)\) is linear.
This occurs when \(F\) is quadratic in \(m\).
For example, if \(F\) is defined by \eqref{eq:intro_F_nn}
with a quadratic loss, \(\ell(y,\theta) = |y-\theta|^2\),
then its functional derivative
\[
\frac{\delta F}{\delta m}(m, x)
= 2 \int_{\mathbb R^d} \bigl( \mathbb E^m[ \hat \varphi(X,z)]
- y \bigr) \hat \varphi(x,z)\,\nu(dy\,dz)
\]
is linear in \(m\).
Another difference is that the PDA outer iteration is updated
with diminishing time steps (or equivalently, learning rates)
\(\Delta t = O \bigl(n^{-1}\bigr)\),
which leads to the absence of exponential convergence,
while EFP fixes the time step \(\Delta t\)
and exhibits exponential convergence
(modulo the errors from the inner iterations).
Finally, the condition (A3) of \cite{Nitanda2021ParticleDA}
seems difficult to verify and our method does not rely on such an assumption.

\subsection{Organization of Paper}

In Section~\ref{sec:mainresult} we state our results
on the existence and convergence of entropic fictitious play.
In Section~\ref{sec:numerical} we provide a toy numerical experiment
to showcase the feasibility of the algorithm
for the training two-layer neural networks.
Finally the proofs are given in Section~\ref{sec:proof}
and they are organized in several subsections
with a table of contents in the beginning to ease the reading.

\section{Main Results}
\label{sec:mainresult}

Fix an integer \(d > 0\) and a real number \(p \geq 1\).
Denote by \(\mathcal P (\mathbb R^d)\) the set of the probability measures
on \(\mathbb R^d\)
and by \(\mathcal P_p(\mathbb R^d)\) the set of those with finite \(p\)-moment.
We suppose the following assumption throughout the paper.

\begin{assumption}
\label{assum:wellposedness}
\begin{enumerate}
\item
The mean field functional
\(F : \mathcal P (\mathbb R^d) \to \mathbb R\) is non-negative
and \(C^1\), that is, there exists a continuous function,
also called \emph{functional linear derivative},
\(\frac{\delta F}{\delta m} : \mathcal P (\mathbb R^d) \times \mathbb R^d
\to \mathbb R\)
such that for every \(m_0\), \(m_1 \in \mathcal P (\mathbb R^d)\),
\[
F(m_1) - F(m_0)
= \int_0^1 \int_{\mathbb R^d} \frac{\delta F}{\delta m} (m_\lambda, x)
(m_1 - m_0)\,m_\lambda(dx)\,d\lambda,
\]
where \(m_\lambda \coloneqq (1 - \lambda) m_0 + \lambda m_1\).
Moreover, there exists constants
\(L_F\), \(M_F > 0\) such that
for every \(m\), \(m' \in \mathcal P (\mathbb R^d)\)
and for every \(x\), \(x' \in \mathbb R^d\),
\begin{align}
\label{eq:lipschitz-delta-F-delta-m}
\biggl| \frac{\delta F}{\delta m} (m, x) - \frac{\delta F}{\delta m} (m', x') \biggr|
&\leq L_F \bigl(\mathcal W_p (m,m') + \lvert x - x'\rvert\bigr), \\
\label{eq:boundedness-delta-F-delta-m}
\biggl| \frac{\delta F}{\delta m} (m, x) \biggr| &\leq M_F.
\end{align}

\item
The function \(U : \mathbb R^d \to \mathbb R\) is measurable and satisfies
\[
\int_{\mathbb R^d} \exp \bigl(-U(x)\bigr)\,dx = 1.
\]
Moreover it satisfies
\[
\essinf_{x \in \mathbb R^d} U(x) > -\infty \quad \text{and}\quad
\liminf_{x \to \infty} \frac{U (x)}{\lvert x\rvert^p} > 0.
\]
\end{enumerate}
\end{assumption}

Given a function \(U\) satisfying Assumption~\ref{assum:wellposedness},
define the Gibbs measure \(g\) on \(\mathbb R^d\) by its density
\(g(x) \coloneqq \exp ( - U(x))\).
In particular, given \(m \in \mathcal P_p (\mathbb R^d)\),
we can consider the relative entropy between \(m\) and \(g\),
\footnote{
The relative entropy is defined to be \(+\infty\)
whenever the integral is not well defined.
Therefore, the relative entropy is defined
for every measure in \(\mathcal P(\mathbb R^d)\)
and is always non-negative.}
\[
H(m|g)
= \int_{x \in \mathbb R^d} \log \frac{dm}{dg} (x)\,m(dx).
\]

In this paper we consider the entropy-regularized optimization
\[
\inf_{m \in \mathcal P (\mathbb R^d)} V^\sigma(m),
\quad\text{where}~V^\sigma(m) \coloneqq F(m) + \frac{\sigma^2}{2} H(m|g).
\]
Our aim is to propose a dynamics of probability measures
converging to the minimizer of the value function \(V^\sigma\).

\begin{proposition}
\label{prop:existence-of-minimizer}
If Assumption~\ref{assum:wellposedness} holds,
then there exists at least one minimizer of \(V^\sigma\),
which is absolutely continuous with respect to the Lebesgue measure
and belongs to \(\mathcal{P}_p (\mathbb{R}^d)\).
\end{proposition}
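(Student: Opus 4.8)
The plan is to run the direct method of the calculus of variations in the space of probability measures: exhibit a minimizing sequence, extract a weak limit using the coercivity supplied by the entropy penalty, and pass to the limit via continuity of \(F\) and lower semicontinuity of the relative entropy.

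First I would check that the infimum is finite. Since \(F\) is real-valued and non-negative and \(H(g|g)=0\), testing against the reference measure gives \(0\le\inf_m V^\sigma(m)\le V^\sigma(g)=F(g)<\infty\). Fix a minimizing sequence \((m_n)\) with \(V^\sigma(m_n)\to\inf V^\sigma\); discarding finitely many terms we may assume \(V^\sigma(m_n)\le C_0\). Because \(F\ge0\), this yields the uniform entropy bound \(\frac{\sigma^2}{2}H(m_n|g)\le V^\sigma(m_n)\le C_0\).

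The compactness step is where the structure of \(g\) enters, and I expect it to be the main technical point. The growth hypothesis \(\liminf_{x\to\infty}U(x)/|x|^p>0\) ensures \(\int_{\mathbb R^d}e^{\varepsilon|x|^p}\,g(dx)<\infty\) for some \(\varepsilon>0\). Applying the entropy (Donsker--Varadhan) inequality \(\int\phi\,dm\le H(m|g)+\log\int e^{\phi}\,dg\) with \(\phi=\varepsilon|x|^p\) converts the entropy bound into a uniform moment bound \(\int_{\mathbb R^d}|x|^p\,m_n(dx)\le C_1\). This makes \((m_n)\) tight, so by Prokhorov's theorem a subsequence converges weakly to some \(m^*\); lower semicontinuity of \(m\mapsto\int|x|^p\,dm\) under weak convergence then gives \(m^*\in\mathcal P_p(\mathbb R^d)\).

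It remains to show \(V^\sigma(m^*)\le\inf V^\sigma\). The relative entropy is lower semicontinuous for weak convergence, so \(\liminf_n H(m_n|g)\ge H(m^*|g)\). For the potential term I would prove that \(F\) is weakly continuous: writing the increment through the linear-derivative representation, \(F(m_n)-F(m^*)=\int_0^1\int_{\mathbb R^d}\frac{\delta F}{\delta m}(m_\lambda^n,x)\,(m_n-m^*)(dx)\,d\lambda\) with \(m_\lambda^n=(1-\lambda)m^*+\lambda m_n\), each integrand \(x\mapsto\frac{\delta F}{\delta m}(m_\lambda^n,x)\) is, by \eqref{eq:boundedness-delta-F-delta-m} and \eqref{eq:lipschitz-delta-F-delta-m}, uniformly bounded by \(M_F\) and uniformly \(L_F\)-Lipschitz; hence \(\bigl|\int\frac{\delta F}{\delta m}(m_\lambda^n,\cdot)\,(m_n-m^*)\bigr|\) is controlled by \(\max(M_F,L_F)\) times the bounded-Lipschitz (Dudley) distance between \(m_n\) and \(m^*\), which vanishes since \(m_n\to m^*\) weakly. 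Dominated convergence over \(\lambda\) then gives \(F(m_n)\to F(m^*)\); note this step uses the boundedness and the Lipschitz estimate together, and works for every \(p\ge1\). Combining, \(\inf V^\sigma=\lim_n V^\sigma(m_n)\ge F(m^*)+\frac{\sigma^2}{2}H(m^*|g)=V^\sigma(m^*)\), so \(m^*\) is a minimizer. Finally \(V^\sigma(m^*)<\infty\) and \(F\ge0\) force \(H(m^*|g)<\infty\), whence \(m^*\ll g\ll\mathrm{Leb}\), giving the stated absolute continuity.
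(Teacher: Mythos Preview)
Your argument is correct and follows the same overall direct-method strategy as the paper: take a minimizing sequence, use the entropy penalty to get a uniform \(p\)-th moment bound and hence tightness, pass to a weak limit, and combine lower semicontinuity of \(H(\cdot|g)\) with continuity of \(F\).

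The execution differs in two places, and yours is in fact a bit cleaner. For the moment bound, the paper splits \(\int m_n(\log m_n + U)\) into \(\{|x|\le R\}\) and \(\{|x|>R\}\), uses \(x\log x\ge -e^{-1}\) on the first piece and introduces an auxiliary Gibbs measure \(\tilde g\propto e^{-c|x|^p/2}\mathbf 1_{|x|>R}\) on the second to extract \(\int_{|x|>R}|x|^p m_n(dx)\). Your Donsker--Varadhan step with \(\phi=\varepsilon|x|^p\) compresses this into one line. For the continuity of \(F\), the paper splits \(\int\frac{\delta F}{\delta m}(m_\lambda^n,\cdot)\,d(m_n-m^*)\) into a bounded-region piece (handled by uniform convergence on compacts) and a tail piece (handled by tightness and the \(M_F\) bound); your observation that \(\frac{\delta F}{\delta m}(m_\lambda^n,\cdot)\) lies in a fixed bounded-Lipschitz ball, together with the equivalence of weak convergence and convergence in the Dudley metric, bypasses this decomposition entirely and even makes the dominated-convergence step over \(\lambda\) unnecessary (the bound is already uniform in \(\lambda\)). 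Both routes rely on the same hypotheses \eqref{eq:lipschitz-delta-F-delta-m}--\eqref{eq:boundedness-delta-F-delta-m} and the growth of \(U\); yours just packages them through more off-the-shelf tools.
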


Given the result above, we can restrict ourselves
to the space of probability measures of finite \(p\)-moments
when we look for minimizers of the regularized problem \(V^\sigma\).
Before introducing the dynamics,
let us recall the first-order condition for being a minimizer.

\begin{proposition}[Proposition 2.5 of \citealt{HRSS19}]\label{prop:foc}
Suppose Assumption~\ref{assum:wellposedness} holds.
If \(m^*\) minimizes \(V^\sigma\) in \(\mathcal P(\mathbb R^d)\),
then it satisfies the \emph{first-order condition}
\begin{equation}
\label{eq:foc}
\frac{\delta F}{\delta m}(m^*,\cdot) + \frac{\sigma^2}{2}\log m^*(\cdot)
+ \frac{\sigma^2}{2}U(\cdot)
~\text{is a constant Leb-a.e.,}
\end{equation}
where \(m^*(\cdot)\) denotes the density function of the measure \(m^*\).

Conversely, if \(F\) is additionally convex,
then every \(m^*\) satisfying \eqref{eq:foc} is a minimizer of \(V^\sigma\)
and such a measure is unique.
\end{proposition}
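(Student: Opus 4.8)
The plan is to prove the two implications separately: a first-variation argument for the necessity of \eqref{eq:foc}, and the convex tangent inequality together with strict convexity of the entropy for the converse and uniqueness. For the necessity I would fix a minimizer $m^*$, which exists and lies in $\mathcal P_p(\mathbb R^d)$ by Proposition~\ref{prop:existence-of-minimizer}, and introduce its best response $\hat m \coloneqq \Phi(m^*)$ defined through \eqref{eq:intro-fixedpoint}; concretely $\hat m(x) \propto \exp\bigl(-\tfrac{2}{\sigma^2}\tfrac{\delta F}{\delta m}(m^*,x) - U(x)\bigr)$, which is a genuine probability density because $\tfrac{\delta F}{\delta m}(m^*,\cdot)$ is bounded by \eqref{eq:boundedness-delta-F-delta-m} and $\int e^{-U}=1$. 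Along the segment $m_\lambda \coloneqq (1-\lambda)m^* + \lambda\hat m$ I would study $G(\lambda)\coloneqq V^\sigma(m_\lambda)$; minimality gives $G'(0^+)\ge 0$. The derivative of the $F$-part at $0$ equals $\int \tfrac{\delta F}{\delta m}(m^*,x)\,(\hat m - m^*)(dx)$ by the linear-derivative definition in Assumption~\ref{assum:wellposedness}, and the derivative of the entropy part equals $\tfrac{\sigma^2}{2}\int(\log m^* + U)\,(\hat m - m^*)(dx)$. Substituting the defining relation $\tfrac{\delta F}{\delta m}(m^*,\cdot) = -\tfrac{\sigma^2}{2}(\log\hat m + U) + \mathrm{const}$ and cancelling the constant against the signed measure $\hat m - m^*$, the two contributions combine into
\[
G'(0^+) = \frac{\sigma^2}{2}\int \log\frac{m^*}{\hat m}\,(\hat m - m^*)(dx) = -\frac{\sigma^2}{2}\bigl(H(\hat m|m^*) + H(m^*|\hat m)\bigr) \le 0.
\]
Combined with $G'(0^+)\ge 0$ this forces both relative entropies to vanish, hence $m^* = \hat m$, which is precisely the first-order condition \eqref{eq:foc}.

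For the converse, assume $F$ is convex and let $m^*$ satisfy \eqref{eq:foc}. The first-order condition rewrites $\tfrac{\delta F}{\delta m}(m^*,\cdot) = c - \tfrac{\sigma^2}{2}(\log m^* + U)$ for some constant $c$, and crucially $\log m^* + U$ is then bounded because $\tfrac{\delta F}{\delta m}(m^*,\cdot)$ is; this resolves every integrability question below. For an arbitrary competitor $m$ with $V^\sigma(m)<\infty$ (such $m$ lies in $\mathcal P_p(\mathbb R^d)$ by the growth of $U$), convexity of $F$ yields the tangent inequality $F(m) - F(m^*) \ge \int \tfrac{\delta F}{\delta m}(m^*,x)\,(m - m^*)(dx)$. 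Substituting the first-order condition, the constant $c$ cancels against $m - m^*$, and regrouping the entropy terms I expect the bound to collapse to
\[
V^\sigma(m) - V^\sigma(m^*) \ge \frac{\sigma^2}{2}\,H(m|m^*) \ge 0,
\]
which shows simultaneously that $m^*$ is a minimizer and, since $H(m|m^*)=0$ if and only if $m = m^*$, that the minimizer is unique.

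The main obstacle is the rigorous justification of the entropy directional derivative in the necessity step: the identity $\tfrac{d}{d\lambda}H(m_\lambda|g)\big|_{0^+} = \int(\log m^* + U)\,(\hat m - m^*)(dx)$ is only formal, since $\log m^*$ may fail to be integrable and can be $-\infty$ on $\{m^*=0\}$. I would handle this by exploiting the convexity of $t\mapsto t\log t$: the difference quotients $\lambda^{-1}\bigl(H(m_\lambda|g) - H(m^*|g)\bigr)$ are monotone in $\lambda$, so the right derivative exists in $[-\infty,+\infty)$ and equals the integral above by monotone convergence, after splitting the integrand into its positive and negative parts. A value of $-\infty$ would contradict $G'(0^+)\ge 0$; this inconsistency is exactly what forces $m^*>0$ Lebesgue-a.e. and lets the computation conclude $m^* = \hat m$. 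The remaining points, namely the continuity of $\lambda\mapsto\int\tfrac{\delta F}{\delta m}(m_\lambda,\cdot)\,(\hat m-m^*)(dx)$ from the Lipschitz bound \eqref{eq:lipschitz-delta-F-delta-m} and the entropy bookkeeping in the converse, are routine.
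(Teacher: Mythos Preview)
The paper does not give its own proof of this proposition: it is stated with the attribution ``Proposition 2.5 of \cite{HRSS19}'' and no argument appears anywhere in Section~\ref{sec:proof}. So there is nothing in the paper to compare your proposal against directly.

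That said, your proof is correct and self-contained, and the necessity argument is in fact nicer than the standard Euler--Lagrange route one expects in \cite{HRSS19}. Rather than perturbing $m^*$ in an arbitrary direction and reading off the Euler--Lagrange equation, you perturb only toward the single measure $\hat m=\Phi(m^*)$ and observe that the first variation collapses to $-\tfrac{\sigma^2}{2}\bigl(H(\hat m|m^*)+H(m^*|\hat m)\bigr)$; minimality then forces $m^*=\hat m$, which is exactly \eqref{eq:foc}. This is precisely the computation that the paper later carries out dynamically in Proposition~\ref{prop:derivative-value-function} (the identity $\tfrac{d}{dt}V^\sigma(m_t)=-\tfrac{\alpha\sigma^2}{2}(H(m_t|\hat m_t)+H(\hat m_t|m_t))$), so your argument dovetails with the paper's later machinery. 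Your treatment of the entropy directional derivative via convexity of $t\mapsto t\log t$ and monotone convergence is the right way to make the formal step rigorous, and correctly yields $m^*>0$ Lebesgue-a.e.\ as a by-product; note that the difference quotients at $\lambda=1$ give the integrable upper bound $\hat m\log(\hat m/g)-m^*\log(m^*/g)$ needed for monotone convergence. The converse and uniqueness via $V^\sigma(m)-V^\sigma(m^*)\ge\tfrac{\sigma^2}{2}H(m|m^*)$ is the standard argument and is fine as written.
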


\begin{definition}
For each \(\mu \in \mathcal P ( \mathbb R^d)\),
define \(G(\mu; \cdot) : \mathcal P (\mathbb R^d) \to \mathbb R\) by
\begin{equation}
G ( \mu ; m) = \expect^{X \sim \mu}
\biggl[ \frac{\delta F}{\delta m} (m, X) \biggr].
\end{equation}
Furthermore, given \(m\in \mathcal P (\mathbb R^d)\), we define
a measure \(\hat m \in \mathcal P(\mathbb R^d)\) by
\begin{equation}\label{defn:hatm}
\hat m = \argmin_{\mu\in \mathcal P (\mathbb R^d)}
G ( \mu ; m)+ \frac{\sigma^2}{2} H(\mu|g),
\end{equation}
whenever the minimizer exists and is unique.
\end{definition}

\begin{proposition}
\label{prop:existence-uniqueness-of-m-hat-in-Pp}
Suppose Assumption~\ref{assum:wellposedness} holds.
The minimizer defined in \eqref{defn:hatm} exists, is unique,
and belongs to \(\mathcal P_p (\mathbb R^d)\).
This defines a mapping
\(\mathcal P_p (\mathbb R^d) \ni m
\mapsto \hat m \in \mathcal P_p (\mathbb R^d)\),
which we denote by \(\Phi\) in the following.
\end{proposition}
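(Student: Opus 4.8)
The plan is to exhibit the minimizer explicitly through a Gibbs variational argument, which delivers existence, uniqueness, and the $\mathcal P_p$-membership in a single stroke. Since $\mu \mapsto G(\mu;m)$ is \emph{linear} in $\mu$ and $\mu \mapsto H(\mu|g)$ is \emph{strictly} convex, the objective in \eqref{defn:hatm} is strictly convex, so at most one minimizer can exist; the real content is to produce one. Guided by the first-order condition \eqref{eq:foc}, I would posit the candidate
\[
\hat m(dx) := \frac{1}{Z_m}\exp\Bigl(-\tfrac{2}{\sigma^2}\tfrac{\delta F}{\delta m}(m,x)\Bigr)\,g(dx), \qquad Z_m := \int_{\mathbb R^d}\exp\Bigl(-\tfrac{2}{\sigma^2}\tfrac{\delta F}{\delta m}(m,x)\Bigr)\,g(dx),
\]
and then verify directly that it is the minimizer.

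First I would check that $\hat m$ is a well-defined probability measure. The bound \eqref{eq:boundedness-delta-F-delta-m} gives $\bigl|\tfrac{\delta F}{\delta m}(m,\cdot)\bigr|\le M_F$, so the integrand defining $Z_m$ lies between $e^{-2M_F/\sigma^2}$ and $e^{2M_F/\sigma^2}$; as $g$ is a probability measure this yields $0<Z_m<\infty$, and moreover $\log\frac{d\hat m}{dg} = -\tfrac{2}{\sigma^2}\tfrac{\delta F}{\delta m}(m,\cdot)-\log Z_m$ is a \emph{bounded} function. The heart of the argument is then the identity
\[
G(\mu;m)+\frac{\sigma^2}{2}H(\mu|g) = \frac{\sigma^2}{2}H(\mu|\hat m) - \frac{\sigma^2}{2}\log Z_m \qquad\text{for every } \mu\in\mathcal P(\mathbb R^d),
\]
which I would obtain from the entropy chain rule $H(\mu|g)=H(\mu|\hat m)+\int\log\frac{d\hat m}{dg}\,d\mu$ together with the explicit form of $\log\frac{d\hat m}{dg}$ (the correction term equals $-\tfrac{2}{\sigma^2}G(\mu;m)-\log Z_m$).

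Because $H(\mu|\hat m)\ge 0$ with equality if and only if $\mu=\hat m$, this identity shows at once that $\hat m$ is the unique minimizer, with minimal value $-\tfrac{\sigma^2}{2}\log Z_m$. The degenerate case $H(\mu|g)=\infty$ I would handle separately: since $\log\frac{d\hat m}{dg}$ is bounded and $\hat m$ and $g$ are mutually absolutely continuous, $H(\mu|g)$ and $H(\mu|\hat m)$ are simultaneously infinite, so both sides equal $+\infty$ (the linear term $G(\mu;m)$ being finite by \eqref{eq:boundedness-delta-F-delta-m}), and such $\mu$ are never minimizers. The one point demanding care — and the only real obstacle — is the measure-theoretic justification of the chain rule, i.e. the multiplicativity of the Radon--Nikodym derivatives and the legitimacy of splitting the logarithm under the integral; this rests on $\hat m \sim g$ (their density ratio is everywhere positive and bounded), which renders every integral in the decomposition absolutely convergent, so the boundedness in \eqref{eq:boundedness-delta-F-delta-m} is precisely what removes all integrability issues.

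It remains to place $\hat m$ in $\mathcal P_p(\mathbb R^d)$. From $\bigl|\tfrac{\delta F}{\delta m}(m,\cdot)\bigr|\le M_F$ and $Z_m>0$ I obtain the pointwise domination $\hat m(x)\le Z_m^{-1}e^{2M_F/\sigma^2}\,g(x)=:C\,g(x)$, whence $\int|x|^p\,\hat m(dx)\le C\int|x|^p\,g(dx)$, and it suffices to show $g\in\mathcal P_p(\mathbb R^d)$. This follows from Assumption~\ref{assum:wellposedness}: the growth condition $\liminf_{x\to\infty}U(x)/|x|^p>0$ furnishes constants $c>0$ and $R>0$ with $U(x)\ge c|x|^p$ for $|x|\ge R$, so that $\int|x|^p e^{-U(x)}\,dx$ splits into a finite integral over $\{|x|<R\}$ and a tail $\int_{|x|\ge R}|x|^p e^{-c|x|^p}\,dx<\infty$, the exponential decay dominating the polynomial. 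Hence $\hat m\in\mathcal P_p(\mathbb R^d)$, and the assignment $m\mapsto\hat m$ defines the mapping $\Phi$.
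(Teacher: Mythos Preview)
Your argument is correct, but it proceeds by a different route than the paper. The paper proves Proposition~\ref{prop:existence-uniqueness-of-m-hat-in-Pp} by the direct method of the calculus of variations, exactly as for Proposition~\ref{prop:existence-of-minimizer}: take a minimizing sequence, use the growth of $U$ to bound the $p$-moments uniformly, extract a weakly convergent subsequence, and conclude via lower semicontinuity of the relative entropy together with continuity of the linear part. Uniqueness is then read off from strict convexity of $H(\cdot|g)$. The explicit Gibbs form \eqref{eq:definition-m-hat} and the density bounds you use are only established afterwards, in Proposition~\ref{prop:existence-uniqueness-regularity-m-hat}.

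Your approach instead exploits the special structure of \eqref{defn:hatm}---that $G(\cdot;m)$ is linear---to invoke the Gibbs variational principle directly: you write down the candidate, and the identity $G(\mu;m)+\tfrac{\sigma^2}{2}H(\mu|g)=\tfrac{\sigma^2}{2}H(\mu|\hat m)-\tfrac{\sigma^2}{2}\log Z_m$ settles existence, uniqueness, and the value of the minimum in one line. This is more elementary and more informative here, since it yields the explicit minimizer immediately rather than as a separate step. The paper's compactness argument, by contrast, is the natural one for Proposition~\ref{prop:existence-of-minimizer} (where $F$ is nonlinear and no closed form is available) and is simply reused verbatim; for the present proposition it is heavier machinery than necessary.
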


Since
\(\frac{\delta G}{\delta \mu}(\mu, x; m) = \frac{\delta F}{\delta m} (m, x)\),
according to the first-order condition in Proposition~\ref{prop:foc},
\(\hat m\) must satisfy
\begin{equation}
\label{eq:foc-m-hat}
\frac{\delta F}{\delta m}(m,\cdot) + \frac{\sigma^2}{2}\log\hat m + \frac{\sigma^2}{2}U~\text{ is a constant Leb-a.e.}
\end{equation}
Therefore, a probability measure \(m\) is a fixed point of the mapping \(\Phi\)
if and only if it satisfies the first-order condition \eqref{eq:foc}.
In particular,
by Propositions~\ref{prop:existence-of-minimizer} and \ref{prop:foc},
there exists at least one minimizer of \(V^\sigma\),
and it is a fixed point of the mapping \(\Phi\).
On the other hand, if \(\Phi\) admits only one fixed point,
then it must be the unique minimizer of \(V^\sigma\).

Given the definition of \(\hat m\),
the \emph{entropic fictitious play} dynamics is the flow of measures
\((m_t)_{t \geq 0}\) defined by
\begin{equation}
\label{eq:efp}
\frac{dm_t}{dt} = \alpha\,(\hat m_t - m_t).
\end{equation}
This equation is understood in the sense of distributions a priori.
We shall show that the entropic fictitious play converges
towards the minimizer of \(V^\sigma\) under mild conditions.

\begin{remark}
Choosing the relative entropy to be the regularizer may seem arbitrary.
It is motivated by the following two observations:
\begin{itemize}
\item If \(F\) is convex, the strict convexity of entropy ensures
that the mapping \(\Phi\) admits at most one fixed point.
\item In numerical applications,
one needs to sample the distribution \(\hat m_t\) efficiently.
Applying the entropic regularization,
we can sample \(\hat m_t\) by Monte Carlo methods since
it is in the form of a Gibbs measure according to \eqref{defn:hatm}.
See Section~\ref{sec:numerical} for more details.
\end{itemize}
\end{remark}

\begin{definition}[Dynamical system per Definition 4.1.1
of \citealt{henry2006geometric}]
Let \(S[t]\) be a mapping from \(\mathcal W_p\) to itself
for every \(t \geq 0\).
We say the collection \(( S[t] )_{t \geq 0}\)
is a \emph{dynamical system} on \(\mathcal W_p\) if
\begin{enumerate}
\item \(S[0]\) is the identity on \(\mathcal W_p\);
\item \(S[t] (S[t'] m) = S[t+t'] m\)
for every \(m \in \mathcal{P}_p(\mathbb R^d)\) and \(t\), \(t' \geq 0\);
\item for every \(m \in \mathcal{P}_p(\mathbb R^d)\),
\(t\mapsto S[t] m\) is continuous;
\item for every \(t \geq 0\), \(m\mapsto S[t] m\) is continuous
with respect to the topology of \(\mathcal W_p\).
\end{enumerate}
\end{definition}

\begin{proposition}[Existence and wellposedness of the dynamics]
\label{prop:wellposedness-time-dynamics}
Suppose Assumption~\ref{assum:wellposedness} holds.
Let \(\alpha\) be a positive real
and let \(m_0\) be in \(\mathcal P_p (\mathbb R^d)\) for some \(p \geq 1\).
Then there exists a solution
\((m_t)_{t\geq 0} \in C\bigl( [0, +\infty); \mathcal W_p \bigr)\)
to \eqref{eq:efp}.

When \(p = 1\), the solution is unique and
depends continuously on the initial condition.
In other words, there exists a dynamical system \((S[t])_{t\geq 0}\)
on \(\mathcal W_1\)
such that \(m_t\) defined by \(m_t = S[t]m_0\)
solves \eqref{eq:efp}.

If additionally the initial value \(m_0\) is absolutely continuous
with respect to the Lebesgue measure,
then the solution \(m_t\) admits density for every \(t > 0\),
and the densities \(m_t (\cdot)\)
solves \eqref{eq:efp} classically.
That is to say, for every \(x \in \mathbb R^d\)
the mapping \(t \mapsto m_t (x)\) is \(C^1\) on \([0, +\infty)\)
and the derivative satisfies
\begin{equation}
\label{eq:efp-pointwise}
\frac{\partial m_t (x)}{\partial t} = \alpha \bigl(\hat m_t (x) - m_t(x) \bigr).
\end{equation}
for every \(t > 0\).
\end{proposition}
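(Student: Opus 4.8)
The plan is to recast \eqref{eq:efp} in its mild (Duhamel) form
\[
m_t = e^{-\alpha t} m_0 + \int_0^t \alpha e^{-\alpha(t-s)} \Phi(m_s)\,ds,
\]
and to solve this fixed-point equation for the trajectory \((m_t)_{t\ge0}\). Everything rests on two structural facts about the best-response map \(\Phi\) from Proposition~\ref{prop:existence-uniqueness-of-m-hat-in-Pp}. First, since \(|\frac{\delta F}{\delta m}|\le M_F\), the density of \(\Phi(m)\) is bounded between \(c_1 g\) and \(c_2 g\) for constants depending only on \(M_F\) and \(\sigma\); consequently the image of \(\Phi\) lies in a set \(K\) that is tight and has uniformly integrable \(p\)-th moments (both dominated by those of \(g\), which are finite by the growth condition on \(U\)), hence is relatively compact in \(\mathcal W_p\). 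Second, \(\Phi\) is continuous: if \(m_n\to m\) in \(\mathcal W_p\) then \eqref{eq:lipschitz-delta-F-delta-m} gives \(\frac{\delta F}{\delta m}(m_n,\cdot)\to\frac{\delta F}{\delta m}(m,\cdot)\) uniformly, so the Gibbs densities converge pointwise and, by the uniform domination by \(c_2 g\), in \(\mathcal W_p\).

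For existence at a general \(p\ge1\) I would apply Schauder's fixed-point theorem to the map \(\mathcal T\) sending a trajectory \((m_s)\) to the right-hand side of the Duhamel formula. Observing that \(e^{-\alpha t}+\int_0^t\alpha e^{-\alpha(t-s)}\,ds=1\), one checks that \(\mathcal T(m)_t\) is a genuine mixture of \(m_0\) and points of \(K\), so it stays in the closed convex hull of \(\{m_0\}\cup K\), which is itself compact in \(\mathcal W_p\) (tightness and moment bounds survive convex combinations). Restricting to trajectories valued in this compact convex set that are Lipschitz in time with a fixed constant gives a convex set, compact in \(C([0,T];\mathcal W_p)\) by Arzel\`a--Ascoli, which \(\mathcal T\) maps into itself because \(\frac{d}{dt}\mathcal T(m)_t=\alpha(\Phi(m_t)-\mathcal T(m)_t)\) is bounded in \(\mathcal W_p\); continuity of \(\mathcal T\) follows from continuity of \(\Phi\) and dominated convergence. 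A fixed point is a solution on \([0,T]\), and concatenation extends it to \([0,+\infty)\), with the moment domination keeping \(m_t\) in \(\mathcal P_p\).

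The case \(p=1\) carries the quantitative work, and I expect the Lipschitz estimate \(\mathcal W_1(\Phi(m),\Phi(m'))\le L_\Phi\,\mathcal W_1(m,m')\) to be the main obstacle. Writing \(\delta=\frac{2L_F}{\sigma^2}\mathcal W_1(m,m')\), the bound \eqref{eq:lipschitz-delta-F-delta-m} (with \(p=1\)) controls the difference of the two log-densities, normalising constants included, so the density ratio \(\Phi(m)/\Phi(m')\) lies in \([e^{-2\delta},e^{2\delta}]\) and hence \(|\Phi(m)-\Phi(m')|\le(e^{2\delta}-1)\Phi(m')\) pointwise. Inserting this into the Kantorovich--Rubinstein dual and using that each test function is controlled by \(|x|\), together with the uniform bound on the first moment of \(\Phi(m')\) coming from \(\Phi(m')\le c_2 g\), converts the pointwise density control into \(\mathcal W_1(\Phi(m),\Phi(m'))\le C\,\delta\) on the bounded-diameter set where the trajectories live. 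The delicate points are precisely this passage from a sup-norm bound on log-densities to a \(\mathcal W_1\) bound and the uniform moment control. Granted the estimate, convexity of \(\mathcal W_1\) along the mixture in the Duhamel formula gives \(\mathcal W_1(m_t,m_t')\le e^{-\alpha t}\mathcal W_1(m_0,m_0')+\int_0^t\alpha e^{-\alpha(t-s)}L_\Phi\,\mathcal W_1(m_s,m_s')\,ds\), and Gr\"onwall yields uniqueness and continuous dependence at once; the identities \(S[0]=\mathrm{id}\) and \(S[t]S[t']=S[t+t']\) then follow from uniqueness and concatenation.

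Finally, for the regularity statement I would read the density off the Duhamel formula: when \(m_0\) is absolutely continuous, \(m_t(\cdot)=e^{-\alpha t}m_0(\cdot)+\int_0^t\alpha e^{-\alpha(t-s)}\hat m_s(\cdot)\,ds\) is a genuine density, since each \(\hat m_s\le c_2 g\) has one. For fixed \(x\) the map \(s\mapsto\hat m_s(x)\) is continuous, because \(m_s\) is \(\mathcal W_1\)-continuous and \(\frac{\delta F}{\delta m}(\cdot,x)\) is Lipschitz in its measure argument, so the Gibbs density depends continuously on \(s\). Rewriting the integral as \(\alpha e^{-\alpha t}\int_0^t e^{\alpha s}\hat m_s(x)\,ds\) and differentiating by the fundamental theorem of calculus produces exactly \(\frac{\partial m_t(x)}{\partial t}=\alpha(\hat m_t(x)-m_t(x))\), whose right-hand side is continuous in \(t\), giving the claimed \(C^1\) classical solution.
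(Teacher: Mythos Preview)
Your proposal is correct and follows essentially the same architecture as the paper: recast \eqref{eq:efp} via Duhamel, apply Schauder for existence in \(\mathcal W_p\), use the Lipschitz property of \(\Phi\) together with a Picard--Gr\"onwall argument for uniqueness and continuous dependence when \(p=1\), and read off the pointwise equation from the density version of Duhamel.

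Two remarks. First, your equicontinuity step contains a small slip: for general \(p>1\) the trajectory \(\mathcal T(m)_t\) is only \(1/p\)-H\"older in \(t\), not Lipschitz. Writing \(\mathcal T(m)_{t+\delta}=e^{-\alpha\delta}\mathcal T(m)_t+(1-e^{-\alpha\delta})\,[\text{mixture of }\Phi(m_s)]\) and using convexity of \(\mathcal W_p^p\) gives \(\mathcal W_p(\mathcal T(m)_{t+\delta},\mathcal T(m)_t)\le C\delta^{1/p}\), which is exactly the paper's estimate \eqref{eq:wasserstein-time-regularity}. This is cosmetic---Arzel\`a--Ascoli only needs a uniform modulus---but your sentence ``\(\frac{d}{dt}\mathcal T(m)_t\) is bounded in \(\mathcal W_p\)'' does not literally make sense for a signed measure and should be replaced by the H\"older bound. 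Second, your Kantorovich--Rubinstein argument for the Lipschitz continuity of \(\Phi\) in \(\mathcal W_1\) is a genuine alternative to the paper's route: the paper instead proves a general lemma (Lemma~\ref{lemma:bound-wasserstein-by-L-infinity}) bounding \(\mathcal W_p(f\mu,g\mu)\) by \(\|f-g\|_{L^\infty}^{1/p}\) via an explicit coupling, and combines it with the pointwise density estimate \eqref{eq:continuity-m-hat-density} to obtain \(1/p\)-H\"older continuity of \(\Phi\) for every \(p\). Your duality argument is shorter and perfectly adequate for \(p=1\), but the paper's coupling lemma buys the H\"older continuity at all \(p\), which it later reuses (e.g.\ in Lemma~\ref{lemma:partial-derivative-m-hat-zero}).
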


Now we study the convergence of the entropic fictitious play dynamics
and to this end we introduce the following assumption.

\begin{assumption}
\label{assum:convergence}
\begin{enumerate}
\item The mapping
\(\Phi : \mathcal P_p (\mathbb R^d) \ni m \mapsto \hat m \in \mathcal P_p (\mathbb R^d)\)
admits a unique fixed point \(m^*\).

\item The initial value \(m_0\) belongs to
\(\mathcal P_{p'} (\mathbb R^d)\) for some \(p' > p\)
and \(H(m_0 | g) < +\infty\).
\end{enumerate}
\end{assumption}

\begin{remark}
Under Assumption~\ref{assum:wellposedness},
the first condition above is implied the convexity of \(F\).
Indeed, if \(F\) is convex,
then the regularized objective \(V^\sigma\)
reads \(V^\sigma = F + H(\cdot|g)\)
and is therefore strictly convex.
So it admits a unique minimizer \(m^*\) in \(\mathcal P_p(\mathbb R^d)\)
and by our previous arguments \(m^*\)is also the unique fixed point
of the mapping \(\Phi\).
\end{remark}

\begin{theorem}[Convergence in the general case]
\label{thm:general-convergence}
Let Assumptions~\ref{assum:wellposedness} and \ref{assum:convergence} hold.
If \((m_t)_{t \geq 0}\) is a flow of measures in \(\mathcal W_p\)
solving \eqref{eq:efp},
then \(m_t\) converges to \(m^*\) in \(\mathcal W_p\) when \(t \to +\infty\),
and for every \(x \in \mathbb R^d\),
\(m_t (x) \to m^*(x)\) when \(t \to +\infty\).

Moreover, the mapping \(t \mapsto V^\sigma(m_t)\) is differentiable with
derivative
\[
\frac{dV^\sigma (m_t)}{dt}
= - \frac{\alpha \sigma^2}{2} \bigl( H(m_t | \hat m_t) + H(\hat m_t | m_t) \bigr),
\]
and it satisfies
\[
\lim_{t \to +\infty} V^\sigma (m_t) = V^\sigma (m^*).
\]
\end{theorem}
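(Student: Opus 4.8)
The plan is to use the free energy $V^\sigma$ itself as a Lyapunov function and to combine an energy-dissipation identity with a LaSalle-type invariance argument. First I would compute the time derivative of $V^\sigma(m_t)$ along \eqref{eq:efp}. Writing the linear derivative of the free energy as $\frac{\delta V^\sigma}{\delta m}(m,x) = \frac{\delta F}{\delta m}(m,x) + \frac{\sigma^2}{2}\bigl(\log\frac{m(x)}{g(x)} + 1\bigr)$ and applying the chain rule with $\frac{dm_t}{dt} = \alpha(\hat m_t - m_t)$, the additive constant $\frac{\sigma^2}{2}$ integrates to zero against the signed measure $\hat m_t - m_t$. The crucial step is to insert the first-order characterisation \eqref{eq:foc-m-hat} of $\hat m_t$, i.e. $\frac{\delta F}{\delta m}(m_t,\cdot) = \text{const} - \frac{\sigma^2}{2}\log\frac{\hat m_t}{g}$, which collapses the integrand to $\frac{\sigma^2}{2}\log\frac{m_t}{\hat m_t}$ and yields exactly $\frac{dV^\sigma(m_t)}{dt} = -\frac{\alpha\sigma^2}{2}\bigl(H(m_t|\hat m_t) + H(\hat m_t|m_t)\bigr)$. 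The differentiability must be justified from the classical (pointwise $C^1$) solution supplied by Proposition~\ref{prop:wellposedness-time-dynamics} by differentiating under the integral sign.

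Since both relative entropies are non-negative, the identity shows $t\mapsto V^\sigma(m_t)$ is non-increasing; as $F\geq 0$ and $H(\cdot|g)\geq 0$ it is bounded below, so it converges to some $V_\infty$ and $\int_0^\infty \bigl(H(m_t|\hat m_t) + H(\hat m_t|m_t)\bigr)\,dt < \infty$. Next I would establish precompactness of the trajectory in $\mathcal W_p$. The boundedness \eqref{eq:boundedness-delta-F-delta-m} of $\frac{\delta F}{\delta m}$ gives the comparison $\hat m_t \leq e^{4M_F/\sigma^2}\,g$, hence a uniform bound $\sup_t \int |x|^{p'}\,d\hat m_t \leq C$ because $g$ has finite moments of every order. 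Feeding this into the moment ODE $\frac{d}{dt}\int|x|^{p'}\,dm_t = \alpha\bigl(\int|x|^{p'}\,d\hat m_t - \int|x|^{p'}\,dm_t\bigr)$ and invoking $m_0 \in \mathcal P_{p'}$ from Assumption~\ref{assum:convergence}, Grönwall yields $\sup_t\int|x|^{p'}\,dm_t<\infty$ for some $p'>p$; since $p'>p$ this gives uniform integrability of $|x|^p$ and thus $\mathcal W_p$-precompactness of $(m_t)$.

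With precompactness in hand, the integrated dissipation provides times $t_n\to\infty$ along which $H(m_{t_n}|\hat m_{t_n}) + H(\hat m_{t_n}|m_{t_n})\to 0$ and, after passing to a subsequence, $m_{t_n}\to m_\infty$ in $\mathcal W_p$. Continuity of $\Phi$ (from the Gibbs form and the Lipschitz bound \eqref{eq:lipschitz-delta-F-delta-m}) gives $\hat m_{t_n}\to\Phi(m_\infty)$, and lower semicontinuity of relative entropy forces $H(m_\infty|\Phi(m_\infty)) = 0$, i.e. $m_\infty = \Phi(m_\infty)$; by uniqueness in Assumption~\ref{assum:convergence}, $m_\infty = m^*$ (recall that by Proposition~\ref{prop:foc} the existing minimizer is a fixed point, so $m^*$ is the unique minimizer). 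To pin down $V_\infty$ I would use the algebraic identity $H(m|\hat m) = H(m|g) + \frac{2}{\sigma^2}\int\frac{\delta F}{\delta m}(m,\cdot)\,dm + \log Z(m)$, obtained by inserting the explicit density of $\hat m$ (here $Z(m)$ is the normalising constant). Along $t_n$ every term on the right converges (the first to $0$, the rest by boundedness and continuity of $\frac{\delta F}{\delta m}$ and dominated convergence for $Z$), and comparison with the same identity at $m^*$, where $\hat m = m^*$, shows $H(m_{t_n}|g)\to H(m^*|g)$. Hence $V^\sigma(m_{t_n})\to V^\sigma(m^*)$ and $V_\infty = V^\sigma(m^*)$. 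Full convergence then follows: any $\mathcal W_p$-limit $\tilde m$ of the precompact trajectory satisfies $V^\sigma(\tilde m)\leq V_\infty = V^\sigma(m^*) = \min V^\sigma$ by lower semicontinuity, so $\tilde m = m^*$; pointwise convergence $m_t(x)\to m^*(x)$ is read off the Duhamel representation of \eqref{eq:efp} once $\hat m_t(x)\to m^*(x)$ pointwise.

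I expect the main obstacle to be concentrated in the entropy functional, in two places. First, justifying the differentiation that produces the dissipation identity requires controlling $\log m_t$ where the density is small and in the tails, so that one may legitimately differentiate $H(m_t|g)$ under the integral along the classical solution. Second, and more delicate, is upgrading subsequential to full convergence: lower semicontinuity of entropy supplies only one inequality, and the equality $V_\infty = V^\sigma(m^*)$ genuinely needs the dissipation-driven identity above to recover the matching upper bound on $m\mapsto H(m|g)$ along the selected subsequence. Everything else is comparatively routine given the boundedness and Lipschitz hypotheses of Assumption~\ref{assum:wellposedness}.
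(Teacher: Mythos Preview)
Your proposal is correct and shares the same skeleton as the paper's proof---compute the dissipation identity via the first-order condition \eqref{eq:foc-m-hat}, bound $p'$-moments for precompactness, and then pass to the limit---but the mechanism you use to upgrade subsequential to full convergence is genuinely different.

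The paper runs a LaSalle invariance argument: for an arbitrary $\omega$-limit point $m$ it picks $t_n\to\infty$ with $t_{n+1}-t_n\geq 1$, sums the dissipation over the shifted intervals $[t_k,t_k+1]$, and then uses the continuity of the semigroup $S[u]$ and of $\Phi$, together with Fatou and the joint lower semicontinuity of relative entropy, to force $H\bigl(S[u]m\,\big|\,\Phi(S[u]m)\bigr)=0$, hence $m=m^*$. Convergence of $V^\sigma(m_t)$ is established \emph{afterwards}, by first proving density convergence via Duhamel and then applying dominated convergence with the explicit envelopes of Lemma~\ref{lemma:integrability-derivative-of-entropy}.

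You instead first pick a \emph{single} subsequence along which the dissipation vanishes (available since $\int_0^\infty(H+H)\,dt<\infty$), identify its limit as $m^*$, and then exploit the algebraic identity
\[
H(m\,|\,\hat m)=H(m\,|\,g)+\tfrac{2}{\sigma^2}\int\tfrac{\delta F}{\delta m}(m,\cdot)\,dm+\log Z(m)
\]
to deduce $H(m_{t_n}|g)\to H(m^*|g)$ and hence $V_\infty=V^\sigma(m^*)$. Full convergence then follows from lower semicontinuity of $V^\sigma$ and uniqueness of the minimizer. This route is slightly more economical: it bypasses the semigroup continuity (which Proposition~\ref{prop:wellposedness-time-dynamics} only asserts for $p=1$), and it delivers $\lim V^\sigma(m_t)=V^\sigma(m^*)$ directly rather than via a separate dominated-convergence step. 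The paper's approach, on the other hand, makes the $\omega$-limit structure more explicit and would adapt more readily to situations where the minimizer is not known a priori to be unique. Both routes face the same genuine technical work in justifying the differentiation of $H(m_t|g)$, which the paper handles through the integrable envelopes of Lemma~\ref{lemma:integrability-derivative-of-entropy}.
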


Given the convexity and higher differentiability of \(F\),
we also show that the convergence of \(V^\sigma(m_t)\) is exponential.

\begin{assumption}
\label{assum:convex}
The mean-field function \(F\) is convex and \(C^2\) with bounded derivatives.
That is to say, there exists a continuous and bounded function
\(\frac{\delta^2 F}{\delta m^2} :
\mathcal P (\mathbb R^d) \times \mathbb R^d \times \mathbb R^d \to \mathbb R\)
such that it is the linear functional derivative
of \(\frac{\delta F}{\delta m}\).
\end{assumption}

\begin{theorem}
\label{thm:exp}
Let Assumptions~\ref{assum:wellposedness},
\ref{assum:convergence} and \ref{assum:convex} hold.
Then we have for every \(t \geq 0\),
\[
0\leq V^\sigma(m_t) - \inf_{m\in \mathcal P(\mathbb R^d)} V^\sigma(m)
\leq \frac{\sigma^2}{2} H(m_0|\hat m_0) e^{-\alpha t}.
\]
\end{theorem}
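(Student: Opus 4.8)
The plan is to reduce the claim to a scalar differential inequality and close it with Grönwall's lemma. Set $D(t) \coloneqq V^\sigma(m_t) - V^\sigma(m^*)$, where $m^*$ is the minimizer from Propositions~\ref{prop:existence-of-minimizer} and \ref{prop:foc}; since $V^\sigma(m^*) = \inf_m V^\sigma(m)$, the quantity $D(t)$ is nonnegative, which already gives the left inequality. Theorem~\ref{thm:general-convergence} provides the differentiability of $t \mapsto V^\sigma(m_t)$ together with the dissipation identity, from which, discarding the nonnegative term $H(\hat m_t|m_t)$, I obtain
\[
\frac{dD(t)}{dt} = -\frac{\alpha\sigma^2}{2}\bigl(H(m_t|\hat m_t) + H(\hat m_t|m_t)\bigr) \le -\frac{\alpha\sigma^2}{2}H(m_t|\hat m_t).
\]
Everything then hinges on a single comparison between the free-energy gap and the relative entropy $H(m|\hat m)$, namely
\[
V^\sigma(m) - V^\sigma(m^*) \le \frac{\sigma^2}{2}H(m|\hat m) \qquad\text{for every } m \in \mathcal P_p(\mathbb R^d).
\]
Granting this inequality, the first display becomes $\frac{dD}{dt} \le -\alpha D$, so $D(t) \le D(0)e^{-\alpha t}$; applying the same inequality at $t = 0$ yields $D(0) \le \frac{\sigma^2}{2}H(m_0|\hat m_0)$, which is exactly the claimed estimate.

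The core of the proof is thus the displayed entropy inequality, which I would establish as follows. The first-order condition \eqref{eq:foc-m-hat} for $\hat m = \Phi(m)$ can be written $\frac{\delta F}{\delta m}(m,\cdot) + \frac{\sigma^2}{2}\log\frac{\hat m}{g} = c$ for a constant $c$; integrating this against $\hat m$ identifies $c$ with the optimal value of the variational problem \eqref{defn:hatm}, that is, $c = G(\hat m;m) + \frac{\sigma^2}{2}H(\hat m|g) = \min_{\mu}\bigl(G(\mu;m) + \frac{\sigma^2}{2}H(\mu|g)\bigr)$. Substituting the same relation into $H(m|\hat m) = \int_{\mathbb R^d}\log\frac{m}{\hat m}\,m(dx)$ gives the rearrangement
\[
\frac{\sigma^2}{2}H(m|\hat m) = \frac{\sigma^2}{2}H(m|g) + \int_{\mathbb R^d}\frac{\delta F}{\delta m}(m,x)\,m(dx) - c,
\]
whence $V^\sigma(m) - \frac{\sigma^2}{2}H(m|\hat m) = F(m) - \int_{\mathbb R^d}\frac{\delta F}{\delta m}(m,x)\,m(dx) + c$. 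To bound the right-hand side by $V^\sigma(m^*)$, I would use the variational characterization of $c$ with the test measure $m^*$, giving $c \le G(m^*;m) + \frac{\sigma^2}{2}H(m^*|g)$, and then invoke the convexity of $F$ from Assumption~\ref{assum:convex} in its tangent-line form $F(m^*) \ge F(m) + \int_{\mathbb R^d}\frac{\delta F}{\delta m}(m,x)\,(m^* - m)(dx)$; the two $\frac{\delta F}{\delta m}$ contributions cancel and leave exactly $F(m^*) + \frac{\sigma^2}{2}H(m^*|g) = V^\sigma(m^*)$.

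I expect the main difficulty to be rigour rather than strategy: one must check that the formal manipulations of the first-order condition are licit, i.e.\ that $\frac{\delta F}{\delta m}(m,\cdot)$ and $\log\frac{\hat m}{g}$ are genuinely integrable against $m$ and $\hat m$, so that the identification of $c$ and the rearrangement of $H(m|\hat m)$ hold as written. The boundedness \eqref{eq:boundedness-delta-F-delta-m} of $\frac{\delta F}{\delta m}$ and the finiteness of the relevant entropies along the flow (secured by Assumption~\ref{assum:convergence} and Proposition~\ref{prop:wellposedness-time-dynamics}) should suffice. The convexity in Assumption~\ref{assum:convex} is the essential structural input, whereas the $C^2$ and bounded-second-derivative part mainly underpins the differentiation of $t \mapsto V^\sigma(m_t)$ borrowed from Theorem~\ref{thm:general-convergence}.
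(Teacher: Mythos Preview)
Your argument is correct, and it follows a genuinely different route from the paper's. The paper never proves the static ``entropy sandwich'' inequality $V^\sigma(m) - V^\sigma(m^*) \le \tfrac{\sigma^2}{2}H(m|\hat m)$; instead it differentiates $t\mapsto H(m_t|\hat m_t)$ itself. Concretely, via Lemma~\ref{lemma:partial-derivative-m-hat-zero} and Proposition~\ref{prop:entropy-exponential} it produces, for each $t>0$, an upper-differential $p_t$ of $H(m_t|\hat m_t)$ with $p_t \le -\alpha\bigl(H(m_t|\hat m_t)+H(\hat m_t|m_t)\bigr)$; this is where the $C^2$ structure and the bound on $\tfrac{\delta^2 F}{\delta m^2}$ from Assumption~\ref{assum:convex} are actually used. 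From this the paper deduces both that $e^{\alpha t}H(m_t|\hat m_t)$ is non-increasing and that $V^\sigma(m_t)-\tfrac{\sigma^2}{2}H(m_t|\hat m_t)$ is non-decreasing, and then sends $t\to\infty$ using Theorem~\ref{thm:general-convergence}. Your approach bypasses all of Proposition~\ref{prop:entropy-exponential}: the sandwich inequality is a direct consequence of the first-order condition for $\hat m$, the variational characterization \eqref{defn:hatm}, and the tangent-line form of convexity of $F$, and then Gr\"onwall on $D(t)$ finishes. This is shorter and in fact uses only the convexity part of Assumption~\ref{assum:convex}; one small correction is that the dissipation identity you borrow (Proposition~\ref{prop:derivative-value-function}/Theorem~\ref{thm:general-convergence}) already holds under Assumptions~\ref{assum:wellposedness} and~\ref{assum:convergence} alone, so the $C^2$ hypothesis is not needed anywhere in your argument. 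What the paper's route buys in exchange for the extra regularity is the separate statement that $H(m_t|\hat m_t)$ itself decays like $H(m_0|\hat m_0)e^{-\alpha t}$, which your method does not isolate.
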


\section{Numerical Example}
\label{sec:numerical}

In this section we walk through the implementation
of the entropic fictitious play in details
by treating a toy example.
Recall that in Algorithm~\ref{alg:efp-informal} the measures are updated
following the outer iteration
\[
\frac{dm_t}{dt} = \alpha\,(\hat m_t - m_t),
\]
and \(\hat m_t = \Phi(m_t)\) is evaluated by the inner iteration.

\subsection{Evaluation of Gibbs measure}

Since \(\hat m_t\) is a Gibbs measure corresponding to the potential
\( \frac{\delta F}{\delta m}(m_t,\cdot) + \frac{\sigma^2}{2} U\),
it is the unique invariant measure of a Langevin dynamics
under the following technical assumptions on \(F\) and \(U\).

\begin{assumption}
\label{assum:langevin}
\begin{enumerate}
\item For all \(m \in \mathcal P (\mathbb R^d)\), the function
\(\frac{\delta F}{\delta m} (m, \cdot) : \mathbb R^d \to \mathbb R\)
has a locally Lipschitz derivative, i.e.
the intrinsic derivative of \(F\),
\(DF(m,\cdot) \coloneqq \nabla \frac{\delta F}{\delta m}(m, \cdot)\)
exists everywhere and is locally Lipschitz.

\item The function \(U\) is \(C^2\), and there exists \(\kappa > 0\)
such that \(\bigl(\nabla U(x) - \nabla U(y)\bigr) \cdot (x - y)
\geq \kappa (x - y)^2\) when \(\lvert x-y\rvert\) is sufficiently large.
\end{enumerate}
\end{assumption}

\begin{proposition}
\label{prop:langevin}
Suppose Assumptions~\ref{assum:wellposedness} and \ref{assum:langevin} hold.
Let \(m\) be a probability measure on \(\mathbb R^d\).
Then a probability measure \(\hat m \in \mathcal P (\mathbb R^d)\)
satisfies the condition \eqref{eq:foc-m-hat}
if and only if it is the unique stationary measure of the Langevin dynamics
\begin{equation}
\label{eq:langevin-m-hat}
d \Theta_s = - \biggl( DF(m,\Theta_s) + \frac{\sigma^2}{2} \nabla U(\Theta_s) \biggr)\,ds
+ \sigma\,dW_s,
\end{equation}
where \(W\) is a standard Brownian motion.
Moreover, if \(\Law(\Theta_0)\in \cup_{p>2}\mathcal{P}_p(\mathbb R^d)\),
then the marginal distributions \(\Law(\Theta_s)\) converge
in Wasserstein-\(2\) distance towards the invariant measure.
\end{proposition}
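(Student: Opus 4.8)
The plan is to recognize the Langevin dynamics \eqref{eq:langevin-m-hat} as a reversible gradient diffusion and to match its invariant density against the first-order condition \eqref{eq:foc-m-hat}. Writing \(\Psi(x) \coloneqq \frac{\delta F}{\delta m}(m,x) + \frac{\sigma^2}{2}U(x)\), so that the drift is \(-\nabla\Psi = -\bigl(DF(m,\cdot) + \frac{\sigma^2}{2}\nabla U\bigr)\), the generator of \eqref{eq:langevin-m-hat} is \(Lf = -\nabla\Psi\cdot\nabla f + \frac{\sigma^2}{2}\Delta f\), whose stationary Fokker--Planck equation \(\nabla\cdot\bigl(\nabla\Psi\,\pi + \frac{\sigma^2}{2}\nabla\pi\bigr)=0\) is solved by the Gibbs density \(\pi(x)\propto \exp\bigl(-\tfrac{2}{\sigma^2}\Psi(x)\bigr)\). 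Taking logarithms in \(\pi\propto e^{-2\Psi/\sigma^2}\) and multiplying by \(\frac{\sigma^2}{2}\) gives exactly \(\frac{\delta F}{\delta m}(m,\cdot)+\frac{\sigma^2}{2}\log\pi+\frac{\sigma^2}{2}U = \mathrm{const}\), which is \eqref{eq:foc-m-hat}; conversely any stationary probability density, or any \(\hat m\) satisfying \eqref{eq:foc-m-hat}, must be proportional to \(e^{-2\Psi/\sigma^2}\) and hence equals \(\pi\), giving both the equivalence and the uniqueness of the stationary measure. Normalizability is immediate from Assumption~\ref{assum:wellposedness}: since \(\lvert\frac{\delta F}{\delta m}(m,\cdot)\rvert\leq M_F\), the density \(e^{-2\Psi/\sigma^2}\) is sandwiched between constant multiples of \(g=e^{-U}\), so it has finite positive mass and inherits the finite \(p\)-moments of \(g\).

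Next I would establish well-posedness of \eqref{eq:langevin-m-hat}. The drift \(-\nabla\Psi\) is locally Lipschitz: \(DF(m,\cdot)\) is locally Lipschitz by Assumption~\ref{assum:langevin}(1) and \(\nabla U\) is locally Lipschitz since \(U\in C^2\). This yields a unique strong solution up to a possible explosion time. To rule out explosion, I would use the confinement built into Assumption~\ref{assum:langevin}(2) together with the boundedness \(\lvert DF(m,\cdot)\rvert\leq L_F\) coming from the Lipschitz bound \eqref{eq:lipschitz-delta-F-delta-m}; a Lyapunov function \(V(x)=\lvert x\rvert^2\) then gives \(LV\leq C - c\lvert x\rvert^2\) for large \(\lvert x\rvert\), which precludes explosion and produces a global Markov--Feller solution with uniform-in-time moment bounds.

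The key long-time estimate is a dissipativity-at-infinity bound for the drift. For \(x,y\in\mathbb R^d\),
\[
\bigl(-\nabla\Psi(x)+\nabla\Psi(y)\bigr)\cdot(x-y)
\leq 2L_F\lvert x-y\rvert - \frac{\sigma^2\kappa}{2}\lvert x-y\rvert^2
\quad\text{for }\lvert x-y\rvert\text{ large,}
\]
where the first term comes from \(\lvert DF(m,x)-DF(m,y)\rvert\leq 2L_F\) and the second from the monotonicity of \(\nabla U\) in Assumption~\ref{assum:langevin}(2). Thus the drift is strictly contracting outside a compact set, though not globally monotone. I would then deduce \(\mathcal W_2\)-convergence of \(\Law(\Theta_s)\) towards \(\pi\) by a reflection-coupling argument (in the spirit of Eberle), which converts this asymptotic contraction, together with the local regularity of the drift, into a genuine contraction for a suitably designed concave distance; this contraction is upgraded to \(\mathcal W_2\) using the uniform-in-time moment bound of order \(p>2\) guaranteed by the moment assumption on \(\Law(\Theta_0)\) and the Lyapunov estimate above. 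Alternatively one may combine the Lyapunov drift condition with a local minorization and invoke Harris' theorem.

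The main obstacle is precisely this last step: because \(DF(m,\cdot)\) is only bounded and not small, the potential \(\Psi\) need not be convex, so a naive synchronous coupling contracts only for large separations and fails in the non-convex core near the origin. Overcoming this requires the reflection-coupling (or Harris) machinery, and some care is needed to propagate a \(p\)-th moment with \(p>2\) uniformly in time so that the \(\mathcal W_1\)-type contraction produced by the coupling can be promoted to convergence in \(\mathcal W_2\).
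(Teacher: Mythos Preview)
The paper does not actually prove this proposition: immediately after the statement it writes ``We refer readers to Theorem 2.11 of \cite{HRSS19} for the proof of the proposition.'' So there is no in-paper argument to compare against.

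Your sketch is correct and follows the standard route one would expect the cited reference to take: identify the Gibbs density \(\pi\propto e^{-2\Psi/\sigma^2}\) via the stationary Fokker--Planck equation, match it to \eqref{eq:foc-m-hat}, obtain global well-posedness from local Lipschitz drift plus a quadratic Lyapunov function, and then establish \(\mathcal W_2\)-convergence by reflection coupling (or a Harris-type argument) since the drift is only contractive at large distances. Your diagnosis of the main obstacle---that \(\Psi\) need not be convex so synchronous coupling fails in a bounded core---and your remedy are both on point. One small gap worth tightening: the claim that \emph{any} stationary probability density must equal \(\pi\) does not follow from the Fokker--Planck equation alone; you should invoke irreducibility of the diffusion (which holds here since the diffusion coefficient is uniformly elliptic and the drift is locally Lipschitz) to conclude uniqueness of the invariant measure, rather than asserting it directly from the stationary PDE.
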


We refer readers to Theorem 2.11 of \cite{HRSS19}
for the proof of the proposition.

\begin{remark}
\label{rem:gibbs sampling}
\begin{enumerate}
\item Various Markov chain Monte Carlo (MCMC) methods
are available for sampling Gibbs measures
\citep{andrieu2003introduction, karras2022overview}.
Here in our inner iteration,
we simulate the Langevin diffusion \eqref{eq:langevin-m-hat}
by the simplest unadjusted Langevin algorithm (ULA) proposed in
\cite{parisi1981correlation}.
However, there are many other efficient MCMC methods for our aim.
For example, we could employ the Metropolis-adjusted Langevin algorithms
or the Hamiltonian Monte Carlo (HMC) methods based on an underdamped dynamics
with fictitious momentum variables \citep{neal2011mcmc}.

\item Exponential convergence in the sense of relative entropy
for ULA proposed above is shown in \cite{Vempala2019RapidCO},
based on a log-Sobolev inequality condition for potential.
There are also convergence results in the sense of
the Wasserstein and total variation distance for Langevin Monte Carlo.
For example,
\cite{Durmus2019HighdimensionalBI} prove Wasserstein convergence for ULA,
\cite{bou2021mixing,cheng2018underdamped} prove respectively
convergence in total variation and in Wasserstein distance
for Hamiltonian Monte Carlo.
\end{enumerate}
\end{remark}

\subsection{Simulation of Entropic Fictitious Play}
\label{sec:simulation-efp}

Now we explain our numerical scheme of the entropic fictitious play
dynamics \eqref{eq:efp}.
First we approximate the probability distributions \(m_t\)
by empirical measures of particles in the form
\[ m_t = \frac{1}{N}\sum_{i=1}^N \delta_{\Theta^i_t},\]
where \(\Theta^i_t \in \mathbb R^d\) encapsulates all the parameters
of a single neuron in the network.
In order to evaluate the Gibbs measure \(\hat m_t\),
we simulate a system of \(M\) Langevin particles
using the Euler scheme for a long enough time \(S\), i.e.
\begin{equation}
\label{eq:euler-langevin}
\Theta^i_{t,s+\Delta s} = \Theta^i_{t,s} - \biggl( DF(m_t,\Theta^i_{t,s})
+ \frac{\sigma^2}{2} \nabla U(\Theta^i_{t,s}) \biggr)\,\Delta s
+ \sigma \sqrt{\Delta s}\,\mathcal{N}^i_{t,s},
\end{equation}
for \(1\leq i\leq M\) and \(s < S\),
where \(\mathcal{N}^i_{t,s}\) are independent standard Gaussian variables.
We then set \(\hat m_t\) equal to the empirical measure of the particles
at the final time \(S\),
\((\Theta^i_{t,S})_{1\leq i\leq M}\), i.e.
\[
\hat m_t \coloneqq \frac1M\sum_{i=1}^M \delta_{\Theta^i_{t,S}}.
\]

To speed up the EFP inner iteration we adopt the following warm start technique.
For each \(t\),
the initial value of the inner iteration
\((\Theta^i_{t+\Delta t,0})_{1\leq i\leq M}\)
is chosen to be the final value of the previous inner iteration,
i.e. \((\Theta^i_{t,S})_{1\leq i\leq M}\).
This approach exploits the continuity of the mapping \(\Phi\)
proved in Corollary~\ref{corollary:wasserstein-lipschitz-phi}:
if \(\Phi\) is continuous,
the measures \(\Phi(m_{t + \Delta t})\), \(\Phi (m_t)\)
should be close to each other as long as
\(m_{t + \Delta t}\), \(m_t\) are close,
and this is expected to hold when the time step \(\Delta t\) is small.
Hence this choice of initial value for the inner iterations
should lead to less error in sampling the Gibbs measure \(\hat m_t\).

Then we explain how to simulate the outer iteration.
The na\"ive approach is to add particles to the empirical measures by
\[
m_{t + \Delta t}
= (1 - \alpha\Delta t)\,m_t + \alpha \Delta t\,\hat m_t
= \frac{1 - \alpha\Delta t}{N}\sum_{i=1}^N \delta_{\Theta^i_{t}}
+ \frac{\alpha \Delta t}{N}\sum_{i=1}^N \delta_{\Theta^i_{t,S}}.
\]
However, this leads to a linear explosion of the number of particles
when \(t \to +\infty\) as at each step it is incremented by \(M\).
To avoid this numerical difficulty,
we view the EFP dynamics \eqref{eq:efp} as a birth-death process
and kill \(\lfloor \alpha \Delta t N \rfloor\) particles
before adding the same number of particles that represents \(\hat m_t\),
calculated by the Gibbs sampler.
In this way, the number of particles to keep remains bounded uniformly in time
and the memory use never explodes.

\subsection{Training a Two-Layer Neural Network by Entropic Fictitious Play}

We consider the mean field formulation
of two-layer neural networks in Section~\ref{section:introduction}
with the following specifications.
We choose the loss function \(\ell\) to be quadratic:
\(\ell(y, \theta) = \frac 12 |y-\theta|^2\),
and the activation function to be the modified ReLU,
\(\varphi (t) =\max\bigl(\min (t,5),0\bigr)\).
We also fix a truncation function \(h\) defined by
\(h(x) = \max\bigl(\min (x,5),-5\bigr)\).
In this case, the objective functional \(F\) reads
\[
F(m) = \frac 1{2K} \sum_{k=1}^K \bigl( y_k
- \expect^m \bigl[h(\beta)\varphi(\alpha \cdot z_k + \gamma) \bigr] \bigr)^2.
\]
where \((\alpha, \beta, \gamma)\) is a random variable distributed as \(m\)
and \((z_k, y_k)_{k=1}^{K}\) is the data set
with \(z_k\) being the features and \(y_k\) being the labels.
Finally we choose the reference measure \(g\)
by fixing \(U(x) = \frac{1}{2} x^2 + \text{constant}\),
where the constant ensures that \(\int g = \int \exp\bigl(-U(x)\bigr)\,dx = 1\).
Under this choice, one can verify Assumptions~\ref{assum:wellposedness},
\ref{assum:convergence}, \ref{assum:convex},
and the Langevin dynamics \eqref{eq:euler-langevin}
for the inner iteration at time \(t\) reads
\[
\begin{aligned}
d \beta_s&=
\frac 1Kh'(\beta_s) \varphi(\alpha_s \cdot z_k+\gamma_s)
\sum_{k=1}^{K}
\bigl(y_k - \expect^{m_t}\bigl[h(\beta)\varphi(\alpha\cdot z_k+\gamma)\bigr]\bigr)
\,ds
- \frac{\sigma^2}{2}\,\beta_s\,ds+\sigma\,dW_s^\beta \\
d \alpha_s&=
\frac 1Kh(\beta_s) z_k \varphi'(\alpha_s \cdot z_k+\gamma_s)
\sum_{k=1}^{K}
\bigl(y_k - \expect^{m_t}\bigl[h(\beta)\varphi(\alpha\cdot z_k+\gamma)\bigr]\bigr)\,ds
- \frac{\sigma^2}{2}\,\alpha_s\,ds+\sigma\,dW_s^\alpha \\
d \gamma_s&=
\frac 1Kh(\beta_s) \varphi'(\alpha_s \cdot z_k+\gamma_s)
\sum_{k=1}^{K}
\bigl(y_k - \expect^{m_t}\bigl[h(\beta)\varphi(\alpha\cdot z_k+\gamma)\bigr]\bigr)\,ds
- \frac{\sigma^2}{2}\,\gamma_s\,ds+\sigma\,dW_s^\gamma
\end{aligned}
\]
where \(W^{\{\alpha,\beta,\gamma\}}\) are independent standard Brownian motions
in respective dimensions.
The discretized version of this dynamics
is then calculated on the interval \([0,S]\).

As a toy example,
we approximate the \(1\)-periodic sine function \(z \mapsto \sin(2\pi z)\)
defined on \([0,1]\) by a two-layer neural network.
We pick \(K=101\) samples evenly distributed on the interval \([0,1]\),
i.e. \(z_k = \frac{k-1}{101}\),
and set \(y_k = \sin 2 \pi z_k\) for \(k=1\), \(\ldots\,\), \(101\).
The parameters for the outer iteration are
\begin{itemize}
\item time step \(\Delta t = 0.2\),
\item horizon \(T=120.0\),
\item learning rate \(\alpha = 1\),
\item the number of neurons \(N=1000\),
\item the initial distribution of neurons \(m_0 = \mathcal N (0, 15^2)\).
\end{itemize}
For each \(t\),
we calculate the inner iteration \eqref{eq:euler-langevin}
with the parameters:
\begin{itemize}
\item regularization \(\sigma^2 / 2 = 0.0005\),
\item time step \(\Delta s = 0.1\),
\item time horizon for the first step \(S_\text{first}=100.0\),
and the remaining \(S_\text{other} = 5.0\),
\item the number of particles for simulating the Langevin dynamics
\(M = N = 1000\),
\end{itemize}
See Algorithm~\ref{alg:efp-formal} for a detailed description.

We present our numerical results.
We plot the learned approximative functions for different training epochs
(\(t / \Delta t = 10\), \(20\), \(50\), \(100\), \(200\), \(600\))
and compare them to the objective in Figure {\ref{subfig:function-value}}.
We find that in the last training epoch the sine function is well approximated.
We also investigate the validation error,
calculated from \(1000\) evenly distributed points in the interval \([0,1]\),
and plot its evolution in Figure~\ref{subfig:error}.
The final validation error is of the order of \(10^{-4}\)
and the whole training process consumes 63.02 seconds
on the laptop (CPU model: i7-9750H).
However, the validation error does not converge to \(0\),
possibly due to the entropic regularizer added to the original problem.

\begin{algorithm}
\label{alg:efp-formal}
\caption{EFP with Langevin inner iterations}
\LinesNumbered
\KwIn{objective function \(F(\cdot)\),
reference measure \(g\) with potential \(U\),
regularization parameter \(\sigma\),
initial distribution of parameter \(m_0\),
outer iterations time step \(\Delta t\) and horizon \(T\),
inner iterations time step \(\Delta s\) and horizon \(S\),
learning rate \(\alpha\),
and number of particles in simulation \(N\).}
generate i.i.d. \(\Theta_0^i \sim m_0\), \ \(i = 1\), \(\ldots\,\), \(N\); \\
\((\Theta_{0,0}^i)_{i=1}^N \leftarrow (\Theta_0^i)_{i=1}^N\); \\
\For{\rm \(t=0\), \(\Delta t\), \(2\Delta t\), \(\ldots\,\), \(T-\Delta t\)}{
\eIf{\(t = 0\)}
{\(S \leftarrow S_\text{first}\);}
{\(S \leftarrow S_\text{other}\);}
\tcp{Inner iterations}
\For{\rm \(s=0\), \(\Delta s\), \(2\Delta s\), \(\ldots\,\), \(S-\Delta s\)}{
generate standard normal variable \(\mathcal{N}^i_{t,s}\); \\
\tcp{Update the inner particles by Langevin dynamics}
\For{\rm\(i = 1\), \(2\), \(\ldots\,\), \(N\)}{
\(\Theta^i_{t,s+\Delta s} \leftarrow
\Theta^i_{t,s} - \bigl( DF(m_t,\Theta^i_{t})
+ \frac{\sigma^2}{2} \nabla U(\Theta^i_{t,s}) \bigr)\,\Delta s
+ \sigma \sqrt{\Delta s}\,\mathcal{N}^i_{t,s}\);
}
}
\tcp{Outer iteration}
\(K \leftarrow \lfloor \alpha \Delta t N \rfloor\); \\
choose uniformly \(K\) numbers from \(\{1, \ldots, N\}\)
and denote them by \((i_k)_{k=1}^K\); \\
\For{\rm \(i = 1\), \(2\), \(\ldots\,\), \(N\)}{
\eIf{\(i \in \{ i_k \}_{k=1}^K\)}{
\(\Theta^{i}_{t + \Delta t} \leftarrow \Theta^{i}_{t, S}\); \\
}
{
\(\Theta^i_{t + \Delta t} \leftarrow \Theta^i_t\); \\
}
}
\tcp{Warm start for inner iterations}
\For{\rm\(i=1\), \(2\), \(\ldots\,\), \(N\)}{
\(\Theta^i_{t+\Delta t,0} \leftarrow \Theta^i_{t,S}\); \\
}
}
\KwOut{distribution \(m_T = \frac1N\sum_{i=1}^N \delta_{\Theta^i_{T}}\).}
\end{algorithm}

\begin{figure}[htbp]
\subfigure[Approximated function value.]{
\begin{minipage}[t]{0.45\textwidth}
\centering
\includegraphics[width=\linewidth]{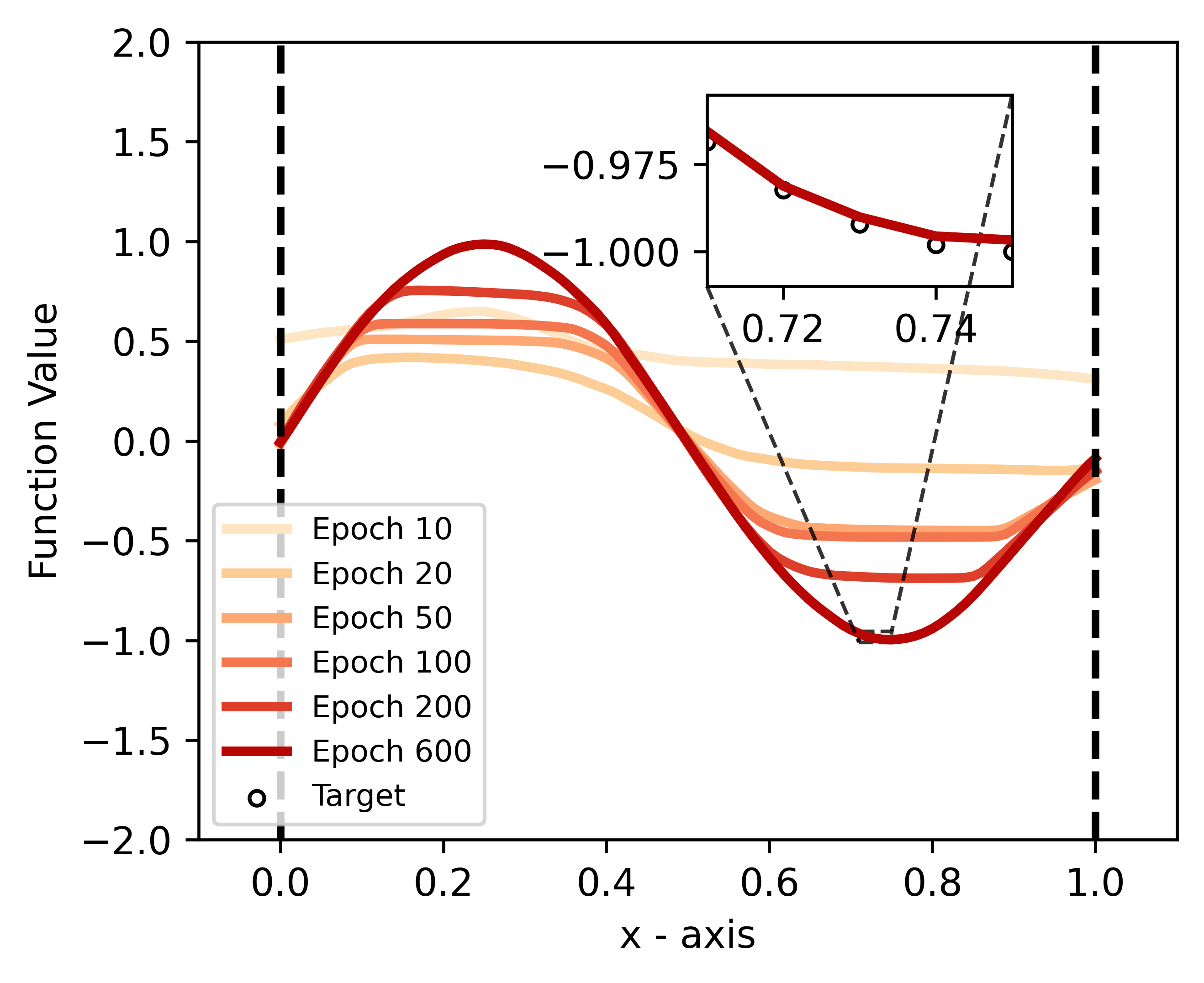}
\vfil
\label{subfig:function-value}
\end{minipage}
}
\hfil
\subfigure[Validation error in training.]{
\begin{minipage}[t]{0.45\textwidth}
\centering
\includegraphics[width=\linewidth]{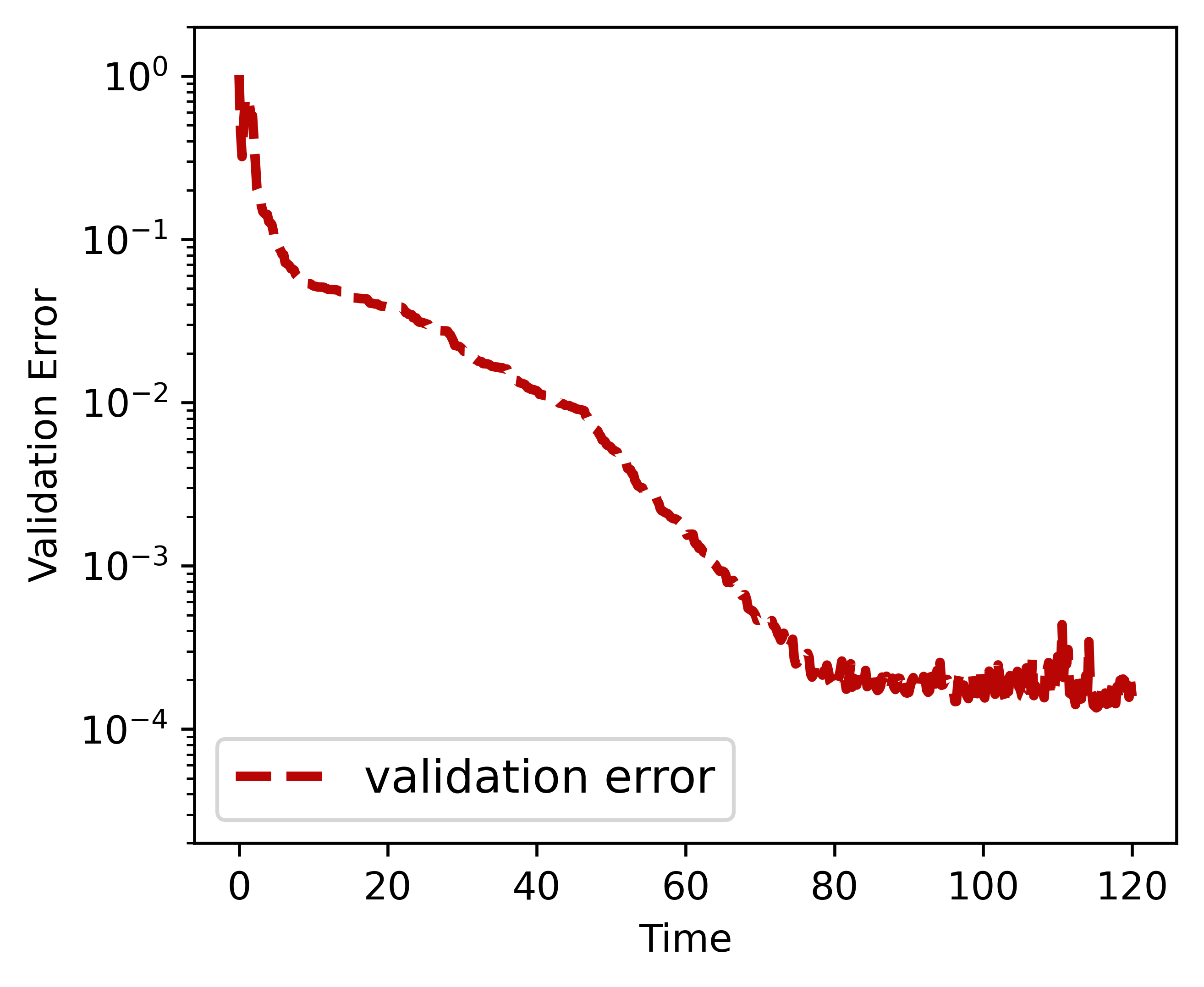}
\vfil
\label{subfig:error}
\end{minipage}
}
\hfil
\centering
\caption{(a) The approximated function value at different time: the colors from shallow to deep represents the number of outer iterations processed, epoch 10, 20, 50, 100, 200, 600 respectively; (b) The validation error at different training epochs.}
\end{figure}

\newpage

\section{Proofs}
\label{sec:proof}
\secttoc

\subsection{Proof of Propositions
\ref{prop:existence-of-minimizer} and
\ref{prop:existence-uniqueness-of-m-hat-in-Pp}}

\begin{proof}{\bf of Propositions
\ref{prop:existence-of-minimizer} and
\ref{prop:existence-uniqueness-of-m-hat-in-Pp}\ }
We only show Proposition~\ref{prop:existence-of-minimizer}
as the method is completely the same for the other proposition.

By Assumption~\ref{assum:wellposedness} we have
\(\liminf_{x\to\infty} U(x)/\lvert x\rvert^{p} > 0\).
Then we can find \(R\), \(c > 0\) such that
\(U(x) \geq c \lvert x\rvert^p\) for \(\lvert x\rvert > R\).
Choose a minimizing sequence \((m_n)_{n \in \mathbb N}\) in the sense that
\(V^\sigma (m_n) \searrow
\inf_{m \in \mathcal P (\mathbb R^d)} V^\sigma (m)\)
when \(n \to +\infty\).
Then we have
\begin{align*}
\sup_{n\in\mathbb N} V^\sigma (m_n) &\geq H(m_n | e^{-U} )
= \int m_n(x) \bigl( \log m_n (x) + U(x) \bigr)\,dx \\
&= \biggl(\int_{\lvert x\rvert \leq R} + \int_{\lvert x\rvert > R}\biggr) m_n (x) \bigl( \log m_n (x) + U(x) \bigr)\,dx \\
& \geq - \frac{c_d R^d}{e} + \essinf_{\lvert x\rvert \leq R} U(x)
+ \int_{\lvert x\rvert > R} m_n (x) \bigl( \log m_n (x) + U(x) \bigr)\,dx \\
& \geq - \frac{c_d R^d}{e} + \essinf_{\lvert x\rvert \leq R} U(x)
+ \int_{\lvert x\rvert > R} m_n (x) \bigl( \log m_n (x)
+ c \lvert x\rvert^p \bigr)\,dx,
\end{align*}
where the second inequality is due to \(x\log x \geq - e^{-1}\)
and \(c_d\) denotes the volume of the \(d\)-dimensional unit ball.

Define \(\tilde Z = \int_{ \lvert x\rvert > R}
\exp \bigl( - c |x|^p / 2\bigr)\,dx\)
and denote by \(\tilde g\) the probability measure
\[\tilde g (dx) = \frac{\mathbf 1_{|x| > R}}{\tilde Z}
\exp \biggl( - \frac c2 \lvert x\rvert^p \biggr)\,dx\]
supported on \(\{\lvert x\rvert > R\}\).
Using the fact that the relative entropy is always nonnegative, we have
\begin{align*}
&\hspace{-1em}\int_{\lvert x\rvert > R} m_n (x) \bigl( \log m_n (x) + c \lvert x\rvert^p \bigr)\,dx \\
&= \int_{\lvert x\rvert > R} m_n (x) \biggl( \log m_n (x) + \frac c2 \lvert x\rvert^p + \frac c2 \lvert x\rvert^p \biggr)\,dx \\
&= H (m_n | \tilde g ) - \log \tilde Z\int_{\lvert x\rvert > R} m_n (x)\,dx
+ \frac c2 \int_{\lvert x\rvert > R} m_n (x) \lvert x\rvert^p\,dx \\
&\geq - \lvert \log \tilde Z \rvert + \frac c2 \int_{\lvert x\rvert > R} m_n (x) \lvert x\rvert^p\,dx.
\end{align*}
Combining the two inequalities above, we obtain
\[
\frac c2 \int_{\lvert x\rvert > R} m_n (x) \lvert x\rvert^p\,dx
\leq \lvert \log \tilde Z \rvert + \frac{c_d R^d}{e}
- \essinf_{\lvert x\rvert < R} U(x) + \sup_{n \in \mathbb N} V^\sigma (m_n),
\]
which implies
\[
\sup_{n\in\mathbb N} \lVert m_n \rVert^p_p
= \sup_{n\in\mathbb N} \int m_n (x) \lvert x\rvert^p\,dx
< +\infty,
\]
that is, the \(p\)-moment of the minimizing sequence is uniformly bounded.
So the sequence \((m_n)_{n \in \mathbb N}\) is tight
and \(m_n \to m^*\) weakly for some \(m^* \in \mathcal P (\mathbb R^d)\)
along a subsequence.
Applying the following lemma, whose proof is postponed,
we obtain \(m^* \in \mathcal P_p (\mathbb R^d)\).

\begin{lemma}[``Fatou's lemma'' for weak convergence of measure]
\label{lemma:Fatou-for-weak-convergence}
Let \(X\) be a metric space,
\(f : X \to \mathbb R_+\) be nonnegative continuous function
and \((m_n)_{n \in \mathbb N}\) be a sequence of probability measures on \(X\).
If \(m_n\)
converges to another probability measure \(m\) weakly, then
\[
\int_{X} f\,dm \leq \liminf_{n\to+\infty} \int_X f\,dm_n.
\]
\end{lemma}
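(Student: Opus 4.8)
The plan is to deduce this lower-semicontinuity statement from the defining property of weak convergence, namely that \(\int_X g\,dm_n \to \int_X g\,dm\) for every \emph{bounded} continuous function \(g\). The subtlety is that \(f\) is only assumed nonnegative and continuous, not bounded, so it need not be an admissible test function directly. The key device is therefore truncation: for each integer \(k \geq 1\) I would introduce \(f_k \coloneqq \min(f, k)\), which is continuous, nonnegative, and bounded by \(k\), hence a legitimate test function for the weak convergence \(m_n \to m\).

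First I would apply weak convergence to each \(f_k\) to obtain \(\int_X f_k\,dm = \lim_{n\to+\infty} \int_X f_k\,dm_n\). Then, using the pointwise inequality \(f_k \leq f\), I would bound \(\int_X f_k\,dm_n \leq \int_X f\,dm_n\) for every \(n\); taking the limit inferior in \(n\) turns the previous equality into
\[
\int_X f_k\,dm \leq \liminf_{n\to+\infty} \int_X f\,dm_n,
\]
an inequality whose right-hand side is independent of \(k\). Finally, since \(f_k = \min(f,k) \nearrow f\) as \(k \to +\infty\), the monotone convergence theorem gives \(\int_X f_k\,dm \to \int_X f\,dm\), and letting \(k \to +\infty\) in the display above yields the claimed bound.

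The argument carries no genuine obstacle: it is the standard proof that integration against a nonnegative continuous integrand is weakly lower semicontinuous. The only point worth flagging is the dual role played by the truncation level \(k\)—it must be kept finite for \(f_k\) to qualify as a bounded test function in the weak-convergence step, yet sent to infinity to recover \(f\) in the monotone-convergence step. These two uses are reconciled precisely because \(\min(f,k)\) is simultaneously bounded for each fixed \(k\) and increasing to \(f\) as \(k \to +\infty\).
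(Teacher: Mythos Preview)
Your proof is correct and follows essentially the same approach as the paper: both arguments truncate via \(f \wedge M\) (equivalently your \(f_k = \min(f,k)\)), invoke weak convergence on the bounded continuous truncations, and then pass to the supremum in the truncation level. The only cosmetic difference is that the paper phrases the final limit exchange as a one-line \(\sup\inf \leq \inf\sup\) inequality, whereas you spell it out via monotone convergence.
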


Since the relative entropy is weakly lower-semicontinuous,
the entropy of \(m^*\) satisfies
\[H ( m^* | g) \leq \liminf_{n \to +\infty} H(m_n | g).\]
We show the regular part satisfies \(\lim_{n\to+\infty} F(m_n) = F(m^*)\).
Indeed, by the definition of functional derivative, we have
\[
\bigl| F(m_n) - F(m^*) \bigr| \leq \int_0^1 \biggl| \int_{\mathbb R^d}
\frac{\delta F}{\delta m} ( m_{\lambda, n}, x )\,
(m_n - m) (dx) \biggr|\,d\lambda
\]
where \(m_{\lambda,n} \coloneqq (1 - \lambda) m_n + \lambda m\).
For every \(\lambda \in [0,1]\), we have
\begin{multline*}
\biggl| \int_{\mathbb R^d} \frac{\delta F}{\delta m}( m_{\lambda, n}, x )
\,(m_n - m^*) (dx) \biggr| \\
\leq \biggl| \int_{\mathbb R^d} \frac{\delta F}{\delta m} (m^*, x)
\,(m_n - m^* ) (dx) \biggr|
+ \int_{\mathbb R^d} \biggl| \frac{\delta F}{\delta m} (m_n, x) - \frac{\delta F}{\delta m} (m^*, x) \biggr|\,(m_n + m^* ) (dx).
\end{multline*}
Since \(\frac{\delta F}{\delta m} (m^*, \cdot)\) is a bounded continuous function, the weak convergence \( m_n \to m^*\) implies
\[\lim_{n\to+\infty} \biggl| \int_{\mathbb R^d} \frac{\delta F}{\delta m} (m^*, x)\,(m_n - m^* ) (dx) \biggr| = 0.\]
It remains to show the second term also converges to \(0\).
Since the convergence
\(\frac{\delta F}{\delta m} (m_n, x) \to \frac{\delta F}{\delta m} (m^*, x)\)
is uniform for \(\lvert x\rvert < R\) for every \(R > 0\),
we have
\[\lim_{n\to+\infty} \int_{\lvert x\rvert \leq R}
\biggl|\frac{\delta F}{\delta m} (m_n, x) - \frac{\delta F}{\delta m} (m^*, x)\biggr|
\,(m_n + m^*) (dx) = 0.\]
Consequently,
\begin{align*}
\limsup_{n\to+\infty} &\int_{\mathbb R^d} \biggl| \frac{\delta F}{\delta m} (m_n, x) - \frac{\delta F}{\delta m} (m^*, x) \biggr|\,(m_n + m^* ) (dx) \\
&= \limsup_{n\to+\infty} \biggl(\int_{\lvert x\rvert \leq R} + \int_{\lvert x\rvert > R}\biggr)
\biggl| \frac{\delta F}{\delta m} (m_n, x) - \frac{\delta F}{\delta m} (m^*, x) \biggr|\,(m_n + m^* ) (dx) \\
&\leq \limsup_{n\to+\infty} \int_{\lvert x\rvert > R} \biggl| \frac{\delta F}{\delta m} (m_n, x) - \frac{\delta F}{\delta m} (m^*, x) \biggr|\,(m_n + m^* ) (dx) \\
&\leq M_F \limsup_{n\to+\infty} \int_{\lvert x\rvert > R} (m_n + m^* ) (dx) \\
&= M_F \limsup_{n\to+\infty} \bigl(m^* ( \{ \lvert x\rvert > R \} )
+ m_n ( \{ \lvert x\rvert > R \} ) \bigr) = 0
\end{align*}
by tightness of the sequence \((m_n)_{n \in \mathbb N}\).
Finally, using the boundedness
\[\biggl| \int_{\mathbb R^d} \frac{\delta F}{\delta m} ( m_{\lambda, n}, x )\,(m_n - m) (dx) \biggr| \leq 2M_F,\]
we can apply the dominated convergence theorem
and show that when \(n \to +\infty\),
\[
\int_0^1 \biggl| \int_{\mathbb R^d} \frac{\delta F}{\delta m} ( m_{\lambda, n}, x )\,(m_n - m) (dx) \biggr|\,d\lambda \to 0.
\]

Summing up, we have obtained a measure
\(m^* \in \mathcal P_p (\mathbb R^d)\) such that
\begin{multline*}
V^\sigma (m^*) = F(m^*) + \frac{\sigma^2}{2} H(m^*) \\
\leq \liminf_{n\to+\infty} F(m_n) + \frac{\sigma^2}{2} H(m_n)
= \liminf_{n\to+\infty} V^\sigma (m_n) = \inf_{m \in \mathcal P(\mathbb R^d)} V^\sigma (m).
\end{multline*}
This completes the proof.
\end{proof}

\begin{proof}{\bf of Lemma~\ref{lemma:Fatou-for-weak-convergence}\ }
By the construction of Lebesgue integral, for every positive measure
\(\mu \in \mathcal P(X)\), we have
\[\int_X f\,d\mu = \sup_{M \geq 0} \int_X f \wedge M\,d\mu.\]
Therefore,
\begin{align*}
\int_X f\,dm &= \sup_{M \geq 0} \int_X f \wedge M\,dm
= \sup_{M \geq 0} \liminf_{n \to +\infty} \int_X f \wedge M\,dm_n
= \sup_{M \geq 0} \sup_{n} \inf_{k > n} \int_X f \wedge M\,dm_k \\
&\leq \sup_{n} \inf_{k > n} \sup_{M \geq 0} \int_X f \wedge M\,dm_k
= \liminf_{n \to +\infty} \sup_{M \geq 0 }\int_X f \wedge M\,dm_n
= \liminf_{n \to +\infty} \int_X f\,dm_n,
\end{align*}
where the inequality is due to \(\sup \inf \leq \inf \sup\).
\end{proof}

\subsection{Proof of Proposition~\ref{prop:wellposedness-time-dynamics}}

We prove several technical results before proceeding to the proof of
Proposition~\ref{prop:wellposedness-time-dynamics}.

\begin{proposition}
\label{prop:existence-uniqueness-regularity-m-hat}
Suppose Assumption~\ref{assum:wellposedness} holds.
For every \(m \in \mathcal P_p ( \mathbb R^d)\),
the measure \(\hat m\) determined by
\begin{equation}
\label{eq:definition-m-hat}
\hat m = \frac{1}{Z_m} \exp \biggl(- \frac{\delta F}{\delta m} (m, x) - U(x)\biggr),
\end{equation}
where \(Z_m\) is the normalization constant,
is well defined and belongs to \(\mathcal P_p (\mathbb R^d)\).
Moreover, there exists constants \(c\), \(C\) with \(0 < c < 1 < C < +\infty\)
such that for every \(m \in \mathcal P_p (\mathbb R^d)\)
and every \(x \in \mathbb R^d\),
\begin{equation}
\label{eq:boundedness-m-hat-density}
c e^{-U(x)} \leq \hat m(x) \leq C e^{-U(x)},
\end{equation}
Finally, there exists a constant \(L > 0\)
such that for every \(m\), \(m' \in \mathcal P_p (\mathbb R^d)\)
and every \(x \in \mathbb R^d\),
\begin{equation}
\label{eq:continuity-m-hat-density}
\lvert \hat m(x) - \hat m'(x) \rvert \leq L \mathcal W_p (m,m') e^{-U(x)}.
\end{equation}
\end{proposition}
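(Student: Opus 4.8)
The plan is to push everything through two elementary features of the linear derivative: its uniform boundedness \eqref{eq:boundedness-delta-F-delta-m}, which pins the partition function to a compact interval bounded away from zero, and its uniform Lipschitz continuity \eqref{eq:lipschitz-delta-F-delta-m} in the measure argument, which will deliver the single factor of \(\mathcal W_p(m,m')\). First I would address well-definedness. Writing the normalization constant as \(Z_m = \int_{\mathbb R^d} \exp\bigl(-\frac{\delta F}{\delta m}(m,x) - U(x)\bigr)\,dx\) and using \(\bigl|\frac{\delta F}{\delta m}(m,x)\bigr| \leq M_F\) together with \(\int e^{-U} = 1\), I get the two-sided bound \(e^{-M_F} \leq Z_m \leq e^{M_F}\); hence \(Z_m\) is finite and strictly positive and \(\hat m\) in \eqref{eq:definition-m-hat} is a genuine probability density. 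Membership in \(\mathcal P_p\) then follows from the pointwise domination \(\hat m(x) \leq e^{2M_F} e^{-U(x)}\): the growth condition \(\liminf_{x\to\infty} U(x)/\lvert x\rvert^p > 0\) forces \(e^{-U}\) to have a finite \(p\)-moment, so \(\hat m\) does too.

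The density bounds \eqref{eq:boundedness-m-hat-density} come from the same estimates. Writing \(\hat m(x) = Z_m^{-1} \exp\bigl(-\frac{\delta F}{\delta m}(m,x)\bigr)\,e^{-U(x)}\), the prefactor \(Z_m^{-1}\exp\bigl(-\frac{\delta F}{\delta m}(m,x)\bigr)\) lies between \(e^{-M_F}/e^{M_F} = e^{-2M_F}\) and \(e^{M_F}/e^{-M_F} = e^{2M_F}\), so one may take \(c = e^{-2M_F}\) and \(C = e^{2M_F}\), which indeed satisfy \(0 < c < 1 < C\) since \(M_F > 0\).

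The substantive part is the Lipschitz estimate \eqref{eq:continuity-m-hat-density}. Abbreviating \(f = \frac{\delta F}{\delta m}(m,\cdot)\) and \(f' = \frac{\delta F}{\delta m}(m',\cdot)\), I would factor out \(e^{-U(x)}\) and decompose
\[
\frac{e^{-f(x)}}{Z_m} - \frac{e^{-f'(x)}}{Z_{m'}}
= \frac{e^{-f(x)} - e^{-f'(x)}}{Z_m}
+ e^{-f'(x)} \Bigl( \frac{1}{Z_m} - \frac{1}{Z_{m'}} \Bigr),
\]
so that the perturbation of the unnormalized density and the perturbation of the normalization are treated separately. For the first term, the elementary inequality \(\lvert e^{-a} - e^{-b}\rvert \leq e^{M_F}\lvert a - b\rvert\) for \(a,b \in [-M_F, M_F]\), combined with \eqref{eq:lipschitz-delta-F-delta-m} at \(x' = x\) giving \(\lvert f(x) - f'(x)\rvert \leq L_F\,\mathcal W_p(m,m')\), yields a bound of order \(e^{2M_F} L_F\,\mathcal W_p(m,m')\) after dividing by \(Z_m \geq e^{-M_F}\). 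For the second term, I would write \(\bigl\lvert \frac{1}{Z_m} - \frac{1}{Z_{m'}}\bigr\rvert = \lvert Z_{m'} - Z_m\rvert/(Z_m Z_{m'})\) and estimate \(\lvert Z_m - Z_{m'}\rvert \leq \int \lvert e^{-f} - e^{-f'}\rvert\,e^{-U}\,dx \leq e^{M_F} L_F\,\mathcal W_p(m,m')\) using \(\int e^{-U} = 1\); together with the lower bounds on both partition functions this gives a term of order \(e^{4M_F} L_F\,\mathcal W_p(m,m')\). Summing and restoring the factor \(e^{-U(x)}\) produces \eqref{eq:continuity-m-hat-density} with, for example, \(L = (e^{2M_F} + e^{4M_F}) L_F\).

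The argument is essentially a chain of routine exponential estimates, so there is no deep obstacle; the only place requiring genuine care is the Lipschitz bound, where one must correctly track how a uniform perturbation of the unnormalized Gibbs weight propagates through the normalization constant, and verify that the normalization contribution also scales linearly in \(\mathcal W_p(m,m')\) rather than introducing an uncontrolled factor. The structural reason everything closes is precisely that \(\frac{\delta F}{\delta m}\) is bounded uniformly in both arguments and Lipschitz in its measure argument uniformly in \(x\).
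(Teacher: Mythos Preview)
Your proposal is correct and follows essentially the same route as the paper: both arguments bound the unnormalized density and the partition function using \(\bigl|\frac{\delta F}{\delta m}\bigr|\leq M_F\), deduce the sandwich \eqref{eq:boundedness-m-hat-density} and the finite \(p\)-moment from the upper bound, and then obtain \eqref{eq:continuity-m-hat-density} by combining the elementary inequality \(\lvert e^{a}-e^{b}\rvert\leq e^{a\vee b}\lvert a-b\rvert\) with the Lipschitz bound \eqref{eq:lipschitz-delta-F-delta-m} and the two-sided control of \(Z_m\). The only cosmetic difference is that the paper carries the factor \(2/\sigma^2\) in the exponent (consistent with the first-order condition \eqref{eq:foc-m-hat}), so its constants read \(C=c^{-1}=\exp(4M_F\sigma^{-2})\) rather than your \(e^{\pm 2M_F}\); you have simply followed the statement as written.
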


\begin{proof}
Using \eqref{eq:boundedness-delta-F-delta-m}, we have
\begin{equation}
\label{eq:bound-m-hat-unnormalized-density}
\exp \biggl( -\frac{2}{\sigma^2} M_F - U (x) \biggr)
\leq \exp \biggl(- \frac{2}{\sigma^2} \frac{\delta F}{\delta m} (m, x) - U (x) \biggr)
\leq \exp \biggl( \frac{2}{\sigma^2} M_F - U (x) \biggr),
\end{equation}
and
\begin{multline}
\label{eq:bound-m-hat-normalization-constant}
\exp\biggl(-\frac{2M_F}{\sigma^2}\biggr) Z_0 \\
= \int_{\mathbb R^d} \exp \biggl(- \frac{2}{\sigma^2} M_F - U (x) \biggr) dx \leq Z_m \leq \int_{\mathbb R^d} \exp \biggl(\frac{2}{\sigma^2} M_F - U(x)\biggr)\,dx \\
= \exp\biggl(\frac{2M_F}{\sigma^2}\biggr) Z_0,
\end{multline}
Thus \(\hat m\) is well defined and
\eqref{eq:boundedness-m-hat-density} holds with constant
\(C = c^{-1} = \exp \bigl( 4M_F\sigma^{-2} \bigr)\).
Consequently,
\[
\int_{\mathbb R^d} \lvert x\rvert^p\,\hat m (dx)
\leq \int_{\mathbb R^d} \lvert x\rvert^p \hat m (x)\,dx
\leq \int_{\mathbb R^d} \lvert x\rvert^p C e^{-U(x)}\,dx < +\infty,
\]
that is, \(\hat m \in \mathcal P_p (\mathbb R^d)\).

Meanwhile, using the elementary inequality
\(\lvert e^x - e^y\rvert \leq e^{x\vee y} \lvert x-y\rvert\),
we have
\begin{multline*}
\biggl| \exp \biggl(- \frac{2}{\sigma^2} \frac{\delta F}{\delta m} (m, x) - U (x) \biggr)
- \exp \biggl(- \frac{2}{\sigma^2} \frac{\delta F}{\delta m} (m', x) - U (x) \biggr) \biggr| \\
\leq \frac{2}{\sigma^2} \exp \biggl(\frac{2M_F}{\sigma^2}\biggr)
\mathcal W_p (m,m') \exp \bigl( - U(x)\bigr).
\end{multline*}
Integrating the previous inequality with respect to \(x\), we obtain
\[
\lvert Z - Z'\rvert \leq \frac{2}{\sigma^2}
\exp \biggl(\frac{2M_F}{\sigma^2}\biggr) \mathcal W_p (m,m') Z_0.
\]
Using the bounds \eqref{eq:bound-m-hat-unnormalized-density} and \eqref{eq:bound-m-hat-normalization-constant},
we obtain the Lipschitz continuity \eqref{eq:continuity-m-hat-density}.
\end{proof}

The Lipschitz continuity \eqref{eq:continuity-m-hat-density}
implies the Hölder continuity of \(m \mapsto \hat m\).

\begin{corollary}
\label{corollary:wasserstein-lipschitz-phi}
Suppose Assumption~\ref{assum:wellposedness} holds.
Then the mapping \(\Phi : \mathcal P_p (\mathbb R^d) \to \mathcal P_p (\mathbb R^d)\) is \(1/p\)-H\"older continuous.
\end{corollary}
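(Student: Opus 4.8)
The plan is to upgrade the pointwise density estimate \eqref{eq:continuity-m-hat-density} into a genuine Wasserstein bound by an explicit coupling argument. Write \(\mu = \hat m = \Phi(m)\) and \(\nu = \hat m' = \Phi(m')\), and abbreviate \(\delta \coloneqq L\,\mathcal W_p(m,m')\), so that \eqref{eq:continuity-m-hat-density} reads \(\lvert\mu(x)-\nu(x)\rvert \le \delta\, e^{-U(x)} = \delta\, g(x)\) pointwise. Since both densities are controlled by the common envelope \(C e^{-U}\), and since \(\liminf_{x\to\infty} U(x)/\lvert x\rvert^p > 0\) gives \(U(x)\ge c\lvert x\rvert^p\) for large \(x\) exactly as in the proof of Proposition~\ref{prop:existence-of-minimizer}, the measures \(\mu\) and \(\nu\) have uniformly controlled tails. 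The first step is therefore to record the finite constant \(m_p \coloneqq \int_{\mathbb R^d} \lvert x\rvert^p g(x)\,dx < +\infty\).

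The key construction is a coupling that keeps the shared mass in place and only moves the discrepancy. Let \(\rho \coloneqq \mu \wedge \nu\) denote the pointwise minimum of the two densities, and set \(\eta_\mu \coloneqq (\mu - \rho)_+ = (\mu - \nu)_+\) and \(\eta_\nu \coloneqq (\nu - \rho)_+ = (\nu - \mu)_+\). These two positive measures have equal total mass \(a \coloneqq \lVert \mu - \nu\rVert_{\mathrm{TV}}\); assuming \(a > 0\) (otherwise \(\mu = \nu\) and there is nothing to prove), the plan \(\pi\) that couples \(\rho\) with itself along the diagonal and couples the excess parts through the product \(a^{-1}\,\eta_\mu \otimes \eta_\nu\) is a valid coupling of \(\mu\) and \(\nu\), since its marginals are \(\rho + \eta_\mu = \mu\) and \(\rho + \eta_\nu = \nu\). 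The diagonal part contributes zero cost, so using \(\lvert x-y\rvert^p \le 2^{p-1}(\lvert x\rvert^p + \lvert y\rvert^p)\) and the independence of the two factors one obtains
\begin{equation*}
\mathcal W_p^p(\mu,\nu) \le \int \lvert x-y\rvert^p\,d\pi
\le 2^{p-1}\biggl( \int \lvert x\rvert^p\,d\eta_\mu(x) + \int \lvert y\rvert^p\,d\eta_\nu(y) \biggr).
\end{equation*}

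The final step bounds each excess moment using the pointwise estimate: because \(\eta_\mu \le \lvert \mu - \nu\rvert \le \delta\, g\), we get \(\int \lvert x\rvert^p\,d\eta_\mu(x) \le \delta \int \lvert x\rvert^p g(x)\,dx = \delta\, m_p\), and likewise for \(\eta_\nu\). Combining yields \(\mathcal W_p^p(\mu,\nu) \le 2^p L\, m_p\, \mathcal W_p(m,m')\), and taking \(p\)-th roots gives \(\mathcal W_p(\Phi(m),\Phi(m')) \le 2 (L m_p)^{1/p}\,\mathcal W_p(m,m')^{1/p}\), which is the asserted \(1/p\)-Hölder continuity. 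I expect the only genuine subtlety to be checking that the product coupling of the excess measures is admissible with the stated marginals — this relies on the identity \((\mu-\rho)_+ = (\mu-\nu)_+\) and the equality of masses \(\int \eta_\mu = \int \eta_\nu\) — together with the tail integrability \(m_p < +\infty\); the remaining estimates are routine.
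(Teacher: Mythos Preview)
Your proof is correct and follows essentially the same route as the paper: the paper isolates the coupling construction (diagonal on \(\mu\wedge\nu\), product on the positive and negative excesses) as a standalone Lemma~\ref{lemma:bound-wasserstein-by-L-infinity} bounding \(\mathcal W_p(f\mu,g\mu)\) by \(\lVert f-g\rVert_{L^\infty}^{1/p}\) and then applies it with \(\mu = g = e^{-U}dx\), whereas you carry out the identical coupling argument inline. The estimates and constants match (your \(2^{p-1}\) is the paper's \(C_p\), your \(m_p\) is its \(C_{\mu,p}\)), so there is no substantive difference.
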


Before proving the corollary we show a lemma bounding the Wasserstein (coupling) distance between two probability measures by the \(L^\infty\) distance between their density functions.

\begin{lemma}
\label{lemma:bound-wasserstein-by-L-infinity}
Let \((X, d)\) be a metric space
and \(\mu\) be a Borel probability measure on \(X\).
Consider the space of positive integrable functions with respect to \(\mu\),
\[
L^\infty_{+,1} (\mu) \coloneqq \biggl\{f : X \to \mathbb R
~\text{Borel measurable}
: f \geq 0, \int f\,d\mu = 1\biggr\}.
\]
Equip \(L^\infty_{+,1} (\mu)\) with the usual \(L^\infty\) distance.
Suppose for some \(p \geq 1\) and some \(x_0 \in X\), we have
\(C_{\mu, p} \coloneqq \int_X d (x, x_0)^p\,\mu(dx) < +\infty\).
Then there exists a constant \(L_{\mu, p} > 0\)
such that for every \(f\), \(g \in L^\infty_{+,1} (\mu)\),
\[
\mathcal W_p (f\,\mu, g\,\mu )
\leq L_{\mu,p} \lVert f - g\rVert^{1/p}_{L^\infty},
\]
where \(f\,\mu\) is the probability measure
determined by \((f\,\mu)(A) \coloneqq \int_A f\,d\mu\) and similarily for \(g\).
\end{lemma}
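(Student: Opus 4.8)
The plan is to prove the bound by exhibiting an \emph{explicit coupling} of $f\mu$ and $g\mu$ whose transport cost is controlled by $\lVert f - g\rVert_{L^\infty}$, and then to estimate $\mathcal W_p$ directly from the definition as an infimum over couplings. The guiding intuition is that $f\mu$ and $g\mu$ share a large common part, which can be left in place at zero cost, so that only the small discrepancy between the two densities has to be transported.

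First I would set $h \coloneqq \min(f,g)$ and write the decompositions $f = h + (f-g)_+$ and $g = h + (g-f)_+$, so that $f\mu = h\mu + (f-h)\mu$ and $g\mu = h\mu + (g-h)\mu$. Since $\int f\,d\mu = \int g\,d\mu = 1$, the two excess masses coincide: setting $m \coloneqq \int (f-h)\,d\mu = \int (g-h)\,d\mu = \frac12\int \lvert f-g\rvert\,d\mu$, both excess measures $(f-h)\mu$ and $(g-h)\mu$ have total mass $m$. If $m = 0$ then $f = g$ holds $\mu$-a.e., the two measures coincide, and the inequality is trivial; so I may assume $m > 0$.

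Next I would build the coupling $\pi \coloneqq \pi_{\mathrm{diag}} + \pi_{\mathrm{exc}}$, where $\pi_{\mathrm{diag}}$ is the pushforward of $h\mu$ under the diagonal map $x \mapsto (x,x)$, and $\pi_{\mathrm{exc}} \coloneqq \frac1m\,\bigl((f-h)\mu\bigr) \otimes \bigl((g-h)\mu\bigr)$ is the normalized product coupling of the two excess measures. A direct check of marginals shows that $\pi$ has first marginal $h\mu + (f-h)\mu = f\mu$ and second marginal $h\mu + (g-h)\mu = g\mu$, so $\pi$ is an admissible coupling. The diagonal part contributes nothing to the transport cost since $d(x,x) = 0$, so the whole cost comes from $\pi_{\mathrm{exc}}$.

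It then remains to estimate the cost of $\pi_{\mathrm{exc}}$. Applying the elementary inequality $d(x,y)^p \leq 2^{p-1}\bigl(d(x,x_0)^p + d(y,x_0)^p\bigr)$ (valid for $p \geq 1$ by the triangle inequality and convexity of $t \mapsto t^p$), the cost integral splits into two terms that each factor as a product of a $d(\cdot,x_0)^p$-integral against one excess density and the total mass $m$ of the other; the two factors of $m$ cancel. Using the pointwise bound $0 \leq f - h = (f-g)_+ \leq \lVert f-g\rVert_{L^\infty}$ (and similarly for $g$) together with $\int d(x,x_0)^p\,\mu(dx) = C_{\mu,p}$, I obtain
\[
\int_{X\times X} d(x,y)^p\,\pi_{\mathrm{exc}}(dx\,dy)
\leq 2^p\,C_{\mu,p}\,\lVert f - g\rVert_{L^\infty}.
\]
Taking $p$-th roots gives the claim with $L_{\mu,p} = 2\,C_{\mu,p}^{1/p}$. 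There is no genuine obstacle in this argument; the only point deserving care is that the excess mass may be transported over arbitrarily large distances, so the bound cannot come from distances alone—it is the \emph{pointwise smallness} of the excess density, bounded by $\lVert f-g\rVert_{L^\infty}$, combined with the finite $p$-th moment $C_{\mu,p}$, that keeps the total cost small and produces the $1/p$-exponent.
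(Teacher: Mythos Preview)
Your proof is correct and follows essentially the same approach as the paper: both construct the coupling that leaves the common part $h=f\wedge g$ on the diagonal and couples the excess masses $(f-g)_+\mu$ and $(g-f)_+\mu$ via the (normalized) product measure, then estimate the off-diagonal cost using the triangle inequality $d(x,y)^p\le C_p\bigl(d(x,x_0)^p+d(y,x_0)^p\bigr)$ together with the pointwise bound $(f-g)_\pm\le\lVert f-g\rVert_{L^\infty}$ and the moment assumption. Your explicit constant $L_{\mu,p}=2\,C_{\mu,p}^{1/p}$ corresponds to the paper's $(C_pC_{\mu,p})^{1/p}$ with $C_p=2^{p-1}$.
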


\begin{proof}
Construct the following coupling \(\pi\) between \(f\,\mu\), \(g\,\mu\):
\begin{align*}
\pi &\coloneqq \pi_1 + \pi_2, \\
\pi_1 (dx\,dy) &\coloneqq (f\wedge g)(x)\,\mu_\Delta(dx\,dy), \\
\pi_2 (dx\,dy) &\coloneqq \biggl(\int (f - g)_+ (x) \mu(dx)\biggr)^{-1} (f - g)_+ (x) (g-f)_+ (y)\,\mu (dx)\,\mu (dy).
\end{align*}
Here \(\mu_\Delta\) is the measure supported on the diagonal \(\Delta \coloneqq \{(x, x) : x\in X\} \subset X\times X\) such that \(\mu_\Delta (A\times A) = \mu (A)\).
One readily verifies
that the projection mappings to the first and second variables,
denoted by \(X\), \(Y\) respectively, satisfy
\begin{align*}
X_\# \pi_1 = Y_\# \pi_1 &= (f \wedge g)\,\mu, \\
X_\# \pi_2 &= (f - g)_+\,\mu, \\
Y_\# \pi_2 &= (g - f)_+\,\mu
\end{align*}
Hence \(X_\# \pi = f\,\mu\), \(Y_\# \pi = g\,\mu\)
and \(\pi\) is indeed a coupling between \(f\,\mu\), \(g\,\mu\).

By the definition of Wasserstein distance, we obtain
\begin{multline*}
\mathcal W_p (f\,\mu, g\,\mu)^p
\leq
\int_{X\times X} d(x, y)^p \pi_1\,(dx\,dy)
+ \int_{X\times X} d(x, y)^p \pi_2\,(dx\,dy) \\
= \biggl(\int (f - g)_+\,\mu\biggr)^{-1}
\int_{X\times X} (f - g)_+(x) (g - f)_+(y) d(x, y)^p\,\mu (dx)\,\mu (dy).
\end{multline*}
Using triangle inequality
\(d(x,y)^p \leq C_p \bigl(d(x,x_0)^p + d(y,x_0)^p\bigr)\)
and exchanging \(x\), \(y\), the last term is again bounded by
\begin{align*}
&\hspace{-1em}\biggl(\int (f - g)_+\,\mu\biggr)^{-1}
\int_{X\times X} C_p \bigl(d(x,x_0)^p + d(y,x_0)^p\bigr)(f - g)_+ (x) (g-f)_+(y)\,\mu (dx)\,\mu (dy) \\
&= \frac {C_p}{\int (f - g)_+\,\mu} \int_{X\times X} d(x,x_0)^p
\biggl[(f-g)_+(x)(g-f)_+(y) \\
&\hphantom{
= \frac {C_p}{\int (f - g)_+\,\mu} \int_{X\times X} d(x,x_0)^p \biggl[}
\quad+ (g-f)_+(x)(f-g)_+(y)
\biggr]\,\mu (dx)\,\mu (dy) \\
&= C_p \int_X d(x,x_0)^p |f-g|(x)\,\mu(dx) \leq C_p C_{\mu,p} \Vert f - g \Vert_{L^\infty}.
\end{align*}
The Hölder constant is then given by \(L_{\mu,p} = (C_pC_{\mu,p})^{1/p}\).
\end{proof}

\begin{remark}
The Hölder exponent \(1/p\) in the inequality is sharp.
Consider the example: \(\mu = \mathrm{Leb}_{[0,1]}\),
\(f = (1+\varepsilon) \mathbf{1}_{\left[0,\frac 12\right)}
+ (1-\varepsilon)\mathbf{1}_{\left[\frac 12,1\right]}\),
\(g = (1-\varepsilon) \mathbf{1}_{[0,\frac 12)}
+ (1+\varepsilon) \mathbf{1}_{\left[\frac 12, 1\right]}\).
Then the \(\mathcal W_p\) distance between \(f\mu\), \(g\mu\)
is of order \(\varepsilon^{1/p}\) when \(\varepsilon\to 0\).
\end{remark}

\begin{proof}{\bf of Corollary~\ref{corollary:wasserstein-lipschitz-phi}\ }
Applying Lemma~\ref{lemma:bound-wasserstein-by-L-infinity} with \(\mu (dx) = e^{-U(x)}\,dx\),
we obtain
\[
\mathcal W_p (\hat m_1, \hat m_2) \leq L \biggl\Vert \frac{\hat m_1(x)}{e^{-U(x)}} - \frac{\hat m_2(x)}{e^{-U(x)}} \biggr\Vert_{L^\infty}^{1/p},
\]
while by \eqref{eq:continuity-m-hat-density} we have
\[
\biggl\Vert \frac{\hat m_1(x)}{e^{-U(x)}}
- \frac{\hat m_2(x)}{e^{-U(x)}} \biggr\Vert_{L^\infty}
\leq L \mathcal W_p (m_1, m_2).
\]
The Hölder continuity follows.
\end{proof}

\begin{proof}{\bf of Proposition~\ref{prop:wellposedness-time-dynamics}\ }
\emph{Existence.} We will use Schauder's fixed point theorem.
To this end, fix \(T > 0\), let \(m_0 \in \mathcal P_p\) be the initial value
and denote \(X = C([0,T]; \mathcal W_p)\).
Let \(\mathbf T : X \to X\) be the mapping determined by
\begin{equation}
\label{eq:duhamel}
\mathbf {T} [m]_t \coloneqq \int_0^t \alpha e^{-\alpha (t - s)}\,\hat m_s\,ds
+ e^{-\alpha t}\,m_0
= \int_0^t \alpha e^{-\alpha (t - s)}\,\Phi (m_s)\,ds + e^{-\alpha t}\,m_0,
\quad t \in [0,T].
\end{equation}
We verify indeed \(\mathbf{T}[m] \in X\),
i.e. \(\mathbf{T}[m]_t \in \mathcal P_p\) for every \(t \in [0,T]\),
and \(t \mapsto \mathbf T[m]_t\) is continuous with respect to \(\mathcal W_p\).
This first claim follows from the fact
that \(\mathbf T[m]_t\) is a convex combination of elements in \(\mathcal P_p\),
as we have shown \(\hat m_s = \Phi(m_s) \in \mathcal P_p (\mathbb R^d)\).
The second claim follows from
\begin{multline}
\label{eq:time-regularity-Tm}
\mathcal W_p(\mathbf T[m]_{t+\delta}, \mathbf T[m]_t)^p
\leq \alpha \int_0^\delta e^{-\alpha (\delta - s )}
\mathcal W_p(\hat m_s, m_t)^p\,ds \\
\leq C (1 - e^{-\alpha \delta})
\bigl({\textstyle\sup}_{\hat m \in \Image \Phi} M_p(\hat m)
+ M_p (\mathbf T[m]_t)\bigr).
\end{multline}
Next we show the compactness of the mapping \(\mathbf T\).
Setting \(t = 0\) in the previous equation
and letting \(\delta\) vary in \([0,T]\),
we obtain
\[
\sup_{m\in X} \sup_{t \in [0,T]} M_p \bigl(\mathbf{T}[m]_t\bigr) \leq C.
\]
Plugging this back to \eqref{eq:time-regularity-Tm},
we have
\begin{equation}
\label{eq:wasserstein-time-regularity}
\sup_{m\in X, 0\leq t<t+\delta\leq T}
\mathcal W_p\bigl(\mathbf{T}[m]_{t+\delta}, \mathbf{T}[m]_t\bigr)
\leq C \delta^{1/p}.
\end{equation}
From \eqref{eq:boundedness-delta-F-delta-m}
one knows that \(\Image\Phi\) forms a precompact set in \(\mathcal P_p\),
and since \(X_t \coloneqq \{\mathbf T[m]_t : m\in X\}\) lies in the convex combination of \(\Image\Phi\) and \(\{m_0\}\), \(X_t\) is also precompact.
Then by the Arzel\`a--Ascoli theorem,
\(\Image \mathbf{T} = \mathbf T[X]\) is a precompact set.
In other words, \(\mathbf T\) is a compact mapping.
We use Schauder's theorem to conclude that \(\mathbf T\) admits a fixed point,
i.e. \eqref{eq:efp} admits at least one solution in \(X\).

\emph{Wellposedness when \(p=1\).}
The mapping \(\Phi\) is Lipschitz in this case.
The wellposedness follows from standard Picard--Lipschitz arguments.

\emph{Pointwise solution.}
By definition, \(\hat m_t\) admits the density function
\[
\hat m_t (x) = \frac{1}{Z_t} \exp \biggl( - \frac{2}{\sigma^2} \frac{\delta F}{\delta m} (m_t, x) - U(x) \biggr),
\]
where
\(Z_t \coloneqq \int_{\mathbb R^d}
\exp \bigl( - \frac{2}{\sigma^2} \frac{\delta F}{\delta m} (m_t, x)
- U(x) \bigr)\,dx\)
is the normalization constant.
The functional derivative \(\frac{\delta F}{\delta m} (m_t, x)\) is continuous in \(t\)
by the continuities of \(t\mapsto m_t\)
and \(m\mapsto \frac{\delta F}{\delta m} (m, x)\),
and is bounded for every \(t \geq 0\).
By the dominated convergence theorem, both
\( \exp \bigl( - \frac{2}{\sigma^2} \frac{\delta F}{\delta m} (m_t, x)
- U(x) \bigr) \)
and \(Z_t\) are continuous in \(t\)
and bounded.
Hence \(t \mapsto \hat m_t (x)\) is continuous
and bounded uniformly in \(x\).
Suppose now the initial value \(m_0\) has density \(m_0(x)\).
Define the density of \(m_t\) according to the Duhamel's formula \eqref{eq:duhamel}:
\begin{equation}
\label{eq:duhamel-density}
m_t (x) \coloneqq \int_0^t \alpha e^{-\alpha (t - s)} \hat m_s(x)\,ds
+ e^{-\alpha t} m_0(x), \quad \text{for}~x \in \mathbb R^d.
\end{equation}
By definition \(m_t (x)\) defined by \eqref{eq:duhamel-density}
is indeed the density of \(m_t\) solving the time dynamics \eqref{eq:efp},
and is automatically continuous in \(t\).
Since \(\alpha e^{-\alpha (t - s)} \hat m_s(x)\) in \eqref{eq:duhamel-density} is continuous and bounded in \(s\) for every \(t \geq 0\),
the density \(m_t(x)\) is \(C^1\) in \(t\)
and satisfies the pointwise equality \eqref{eq:efp-pointwise}.
\end{proof}

We also obtain a density bound that will be used in the following.

\begin{corollary}
Suppose Assumption~\ref{assum:wellposedness} holds.
There exist constants \(c\), \(C > 0\), depending only on \(F\) and \(U\),
such that
\begin{align}
\label{eq:m-density-lower-bound}
m_t (x) &\geq ( 1 - e^{-\alpha t} ) c e^{-U(x)}, \\
\label{eq:m-density-upper-bound}
m_t (x) &\leq ( 1 - e^{-\alpha t} ) C e^{-U(x)} + e^{-\alpha t} m_0(x),
\end{align}
for every \(x \in \mathbb R^d\).
\end{corollary}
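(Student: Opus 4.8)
The plan is to read off both bounds directly from the Duhamel representation of the density established in the proof of Proposition~\ref{prop:wellposedness-time-dynamics}. Recall that when $m_0$ is absolutely continuous, the density satisfies
\[
m_t(x) = \int_0^t \alpha e^{-\alpha(t-s)}\,\hat m_s(x)\,ds + e^{-\alpha t}\,m_0(x),
\]
which is precisely \eqref{eq:duhamel-density}. Since the nonnegative kernel $\alpha e^{-\alpha(t-s)}\,ds$ integrates to a total mass of $1 - e^{-\alpha t}$ on $[0,t]$, it suffices to control the integrand $\hat m_s(x)$ uniformly in $s$ and then integrate.

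The uniform control is exactly the two-sided Gibbs bound \eqref{eq:boundedness-m-hat-density} from Proposition~\ref{prop:existence-uniqueness-regularity-m-hat}: there exist constants $0 < c < 1 < C < +\infty$, depending only on $M_F$ (hence on $F$) and on $U$ through the normalization, such that $c\,e^{-U(x)} \leq \hat m_s(x) \leq C\,e^{-U(x)}$ for every $s \geq 0$ and every $x \in \mathbb R^d$. The essential point is that these constants do not depend on $s$, so they factor cleanly out of the time integral.

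I would then perform the elementary computation $\int_0^t \alpha e^{-\alpha(t-s)}\,ds = 1 - e^{-\alpha t}$, obtained by the substitution $u = t - s$. Substituting the upper bound on $\hat m_s$ into the Duhamel formula gives
\[
m_t(x) \leq (1 - e^{-\alpha t})\,C\,e^{-U(x)} + e^{-\alpha t}\,m_0(x),
\]
which is \eqref{eq:m-density-upper-bound}. For the lower bound, substituting $\hat m_s(x) \geq c\,e^{-U(x)}$ yields $m_t(x) \geq (1 - e^{-\alpha t})\,c\,e^{-U(x)} + e^{-\alpha t}\,m_0(x)$, and since $m_0(x) \geq 0$ the final term may simply be discarded to recover \eqref{eq:m-density-lower-bound}.

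There is no substantive obstacle here: the corollary is an immediate consequence of the Duhamel representation together with the uniform density estimate of Proposition~\ref{prop:existence-uniqueness-regularity-m-hat}. The only point deserving (minimal) attention is to confirm that the constants $c$, $C$ are genuinely uniform in the time variable $s$, so that they survive integration against the probability kernel $\alpha e^{-\alpha(t-s)}\,ds$; this is guaranteed because \eqref{eq:boundedness-m-hat-density} holds for \emph{every} $m \in \mathcal P_p(\mathbb R^d)$, and in particular for every $m_s$ along the flow.
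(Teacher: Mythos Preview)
Your proposal is correct and follows essentially the same route as the paper: both use the Duhamel density formula \eqref{eq:duhamel-density}, insert the uniform two-sided bound \eqref{eq:boundedness-m-hat-density} on $\hat m_s(x)$, and integrate the kernel $\alpha e^{-\alpha(t-s)}$ to produce the factor $1-e^{-\alpha t}$, dropping the nonnegative term $e^{-\alpha t}m_0(x)$ for the lower bound. If anything, your write-up is cleaner, since you make explicit that the constants $c,C$ are uniform in $s$ and hence survive the time integration.
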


\begin{proof}
For all \(\hat m \in \Image \Phi\), we have
\[
\hat m(x) \geq c e^{-U(x)}.
\]
Then by the definition of density \eqref{eq:duhamel-density}, we have
\begin{align*}
m_t (x) &\geq \int_0^t \alpha e^{-\alpha (t - s)} \hat m_s(x)\,ds \\
&\geq c e^{-U(x)} \int_0^t \alpha e^{-\alpha (t - s)} \hat m_s(x)\,ds \\
&= ( 1 - e^{-\alpha t} ) c e^{-U(x)}.
\end{align*}
The proof for the upper bound is similar.
\end{proof}

\subsection{Proof of Theorem~\ref{thm:general-convergence}}

As it is important to our proof of Theorem~\ref{thm:general-convergence},
we single out the derivative in time result in the following proposition
and prove it before tackling the other parts of the theorem.

\begin{proposition}
\label{prop:derivative-value-function}
Suppose Assumptions~\ref{assum:wellposedness}
and \ref{assum:convergence} holds,
and let \((m_t)_{t \geq 0}\) be a solution
to \eqref{eq:efp} in \(\mathcal W_p\).
Then for every \(t > 0\),
\begin{equation}
\label{eq:derivative-value-function}
\frac{dV^\sigma (m_t)}{dt} = - \frac{\alpha \sigma^2}{2}
\bigl( H(m_t | \hat m_t) + H(\hat m_t | m_t)\bigr).
\end{equation}
\end{proposition}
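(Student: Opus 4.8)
The plan is to differentiate the two pieces of $V^\sigma = F + \frac{\sigma^2}{2} H(\cdot\,|g)$ separately along the flow \eqref{eq:efp} and then to invoke the first-order characterization \eqref{eq:foc-m-hat} of $\hat m_t$ to collapse everything into the two relative-entropy terms. By Proposition~\ref{prop:wellposedness-time-dynamics} the density $m_t(\cdot)$ exists for $t > 0$, is $C^1$ in $t$, and satisfies $\partial_t m_t = \alpha(\hat m_t - m_t)$ pointwise; I use this throughout, noting that for fixed $t > 0$ the lower bound \eqref{eq:m-density-lower-bound} forces absolute continuity even when $m_0$ is singular.

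For the $F$-part I would form the difference quotient and use the definition of the linear derivative from Assumption~\ref{assum:wellposedness},
\[
\frac{F(m_{t+h}) - F(m_t)}{h} = \int_0^1\!\!\int_{\mathbb R^d} \frac{\delta F}{\delta m}\bigl((1-\lambda)m_t + \lambda m_{t+h}, x\bigr)\,\frac{(m_{t+h}-m_t)(dx)}{h}\,d\lambda.
\]
As $h \to 0$ the interpolating measure converges to $m_t$ in $\mathcal W_p$, so by the Lipschitz bound \eqref{eq:lipschitz-delta-F-delta-m} and the uniform bound \eqref{eq:boundedness-delta-F-delta-m} the integrand converges uniformly to $\frac{\delta F}{\delta m}(m_t, x)$; combined with $(m_{t+h}-m_t)/h \to \alpha(\hat m_t - m_t)$ this yields $\frac{dF(m_t)}{dt} = \alpha\int \frac{\delta F}{\delta m}(m_t,x)\,(\hat m_t - m_t)(dx)$. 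For the entropy I would write $H(m_t|g) = \int m_t \log\frac{m_t}{g}\,dx$ and differentiate, obtaining formally
\[
\frac{d}{dt} H(m_t|g) = \int_{\mathbb R^d} \partial_t m_t\,\log\frac{m_t}{g}\,dx + \int_{\mathbb R^d} m_t\,\partial_t(\log m_t)\,dx,
\]
where the second integral equals $\int \partial_t m_t\,dx = \frac{d}{dt}\int m_t\,dx = 0$ by conservation of mass, since $\int(\hat m_t - m_t)\,dx = 0$. Substituting the first-order condition \eqref{eq:foc-m-hat}, which reads $\frac{\delta F}{\delta m}(m_t,\cdot) = \mathrm{const} - \frac{\sigma^2}{2}\log\frac{\hat m_t}{g}$, into the $F$-derivative and adding $\frac{\sigma^2}{2}$ times the entropy derivative, the constant integrates to zero against the mass-zero signed measure $\hat m_t - m_t$ and the reference measure drops out because $\log\frac{m_t}{g} - \log\frac{\hat m_t}{g} = \log\frac{m_t}{\hat m_t}$, leaving
\[
\frac{dV^\sigma(m_t)}{dt} = \frac{\alpha\sigma^2}{2}\int_{\mathbb R^d} \log\frac{m_t}{\hat m_t}\,(\hat m_t - m_t)(dx) = -\frac{\alpha\sigma^2}{2}\bigl(H(m_t|\hat m_t) + H(\hat m_t|m_t)\bigr),
\]
the last equality obtained by splitting the integral over $\hat m_t$ and $m_t$ and recognising the two relative entropies.

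The crux is justifying the termwise differentiation of the entropy and the finiteness of every integral above. The difficulty is twofold: $\log m_t$ is singular where the density is small, and $\log\frac{1}{g} = U$ is unbounded, and \emph{only their sum} $\log\frac{m_t}{g}$ is controlled — indeed $H(m_0|g) < \infty$ from Assumption~\ref{assum:convergence} constrains the combination rather than the pieces separately, so I must never split $\log m_t$ from $U$. I would control $\log\frac{m_t}{g}$ using the two-sided density bounds: for fixed $t > 0$ the lower bound \eqref{eq:m-density-lower-bound} gives $\log\frac{m_t}{g} \geq \log\bigl((1-e^{-\alpha t})c\bigr)$, a finite constant, while the upper bound \eqref{eq:m-density-upper-bound} together with $\hat m_t \asymp e^{-U}$ from \eqref{eq:boundedness-m-hat-density} bound the integrand from above and furnish a dominating function, so that dominated convergence legitimises both the passage to the limit in the difference quotient and the discarding of the cross term. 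The same bounds show $H(m_t|\hat m_t)$ and $H(\hat m_t|m_t)$ are finite, since $\log\frac{\hat m_t}{g} = -U + O(1)$ reduces each to $H(m_t|g)$ (respectively an integral of $\hat m_t$ against a controlled integrand) up to an $O(1)$ term. I expect this domination argument, carried out uniformly on a small time interval around each $t > 0$, to be the only genuinely delicate step; the algebraic cancellation using \eqref{eq:foc-m-hat} is routine.
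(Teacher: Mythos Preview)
Your proposal is correct and follows essentially the same route as the paper: differentiate $F$ via the linear derivative, differentiate $H(\cdot|g)$ pointwise under a dominated-convergence justification, then substitute the first-order condition \eqref{eq:foc-m-hat} to collapse the result into the symmetrised relative entropy. The paper packages the domination step as a separate lemma giving explicit integrable envelopes for $\log\frac{m_t}{g}\,(\hat m_t - m_t)$ on $[s,\infty)$; the one ingredient you should make explicit when executing the argument is that the upper bound for $m_t\log\frac{m_t}{g}$ is obtained by applying Jensen's inequality (convexity of $x\log x$) to the Duhamel representation \eqref{eq:duhamel-density}, which is what brings in $\bigl(m_0\log\frac{m_0}{g}\bigr)_+$ and hence the hypothesis $H(m_0|g)<\infty$ from Assumption~\ref{assum:convergence}.
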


Before proving the proposition, we show a lemma on the uniform integrability
of \(m_t\) and \(\hat m_t\).

\begin{lemma}
\label{lemma:integrability-derivative-of-entropy}
Fix \(s > 0\).
Under the conditions of the previous proposition,
there exist integrable functions \(f\), \(g\)
such that for every \(t \in [s, +\infty)\) and every \(x \in \mathbb R^d\),
\[
g(x) \leq \log \frac{m_t(x)}{e^{-U(x)}} \bigl(\hat m_t(x) - m_t(x)\bigr)
\leq f(x).
\]
\end{lemma}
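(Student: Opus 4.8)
The plan is to construct a single integrable envelope $h$ with $\bigl|\log\frac{m_t(x)}{e^{-U(x)}}\,(\hat m_t(x)-m_t(x))\bigr|\le h(x)$ for all $t\ge s$ and all $x$, and then set $f=h$ and $g=-h$. A $t$-uniform envelope is what we want because of the intended use: since $\int(\hat m_t-m_t)\,dx=0$, the entropy part of $V^\sigma$ has formal derivative $\frac{d}{dt}H(m_t|g)=\alpha\int \log\frac{m_t}{e^{-U}}(\hat m_t-m_t)\,dx$, and domination by an integrable function is exactly what licenses the interchange of $\frac{d}{dt}$ and $\int$ via dominated convergence in the proof of Proposition~\ref{prop:derivative-value-function}. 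First I would fix $s>0$ and put $\delta\coloneqq 1-e^{-\alpha s}>0$, so that for every $t\ge s$ the lower density bound \eqref{eq:m-density-lower-bound} reads $m_t(x)\ge \delta c\,e^{-U(x)}$, while the upper bound \eqref{eq:m-density-upper-bound} gives $m_t(x)\le C\,e^{-U(x)}+m_0(x)$; I also recall $c\,e^{-U}\le\hat m_t\le C\,e^{-U}$ from \eqref{eq:boundedness-m-hat-density}, with $0<c<1<C<\infty$.

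Next I would bound the two factors separately. From the two-sided density bound, the logarithmic factor obeys $\log(\delta c)\le \log\frac{m_t(x)}{e^{-U(x)}}\le \log\bigl(C+m_0(x)e^{U(x)}\bigr)$; since $\delta c<1$ and $C>1$ this yields the uniform estimate $\bigl|\log\frac{m_t(x)}{e^{-U(x)}}\bigr|\le |\log(\delta c)|+\log\bigl(C+m_0(x)e^{U(x)}\bigr)$. For the difference factor, nonnegativity of both densities together with their upper bounds gives $|\hat m_t(x)-m_t(x)|\le \hat m_t(x)+m_t(x)\le 2C\,e^{-U(x)}+m_0(x)$. Multiplying, I obtain the candidate envelope
\[
h(x)=\Bigl(|\log(\delta c)|+\log\bigl(C+m_0(x)e^{U(x)}\bigr)\Bigr)\Bigl(2C\,e^{-U(x)}+m_0(x)\Bigr),
\]
which is independent of $t$ and dominates the quantity in the statement for all $t\ge s$.

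It then remains to check $\int_{\mathbb R^d}h\,dx<\infty$, and this is where the real work---and the only genuine obstacle---lies. Expanding the product splits $h$ into four pieces. The two pieces carrying the constant $|\log(\delta c)|$ integrate against $e^{-U}$ (total mass $1$) and against $m_0$ (total mass $1$), hence are finite; the piece $\int 2C\,e^{-U(x)}\log(C+m_0 e^{U})\,dx$ is controlled using $\log(C+\rho)\le \log C+\rho/C$ together with $\int e^{-U}=1$ and $\int e^{-U}\,m_0 e^{U}\,dx=\int m_0=1$. The delicate term is the cross term $\int m_0(x)\log\bigl(C+m_0(x)e^{U(x)}\bigr)\,dx$, which behaves like $\int m_0\log(m_0 e^{U})$ for large density ratios and therefore cannot be bounded by moments alone. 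I would handle it by writing $\rho\coloneqq m_0 e^{U}$ and splitting on $\{\rho\ge 1\}$ and $\{\rho<1\}$: on $\{\rho<1\}$ the integrand is at most $\log(C+1)$, giving a finite contribution; on $\{\rho\ge 1\}$ one uses $\log(C+\rho)\le \log(C+1)+\log\rho$, so the contribution is bounded by $\log(C+1)+\int_{\{\rho\ge1\}}m_0\log\rho\,dx$. Finally I relate this last integral to the relative entropy via $\int_{\{\rho\ge1\}}m_0\log\rho=H(m_0|g)-\int_{\{\rho<1\}}m_0\log\rho\le H(m_0|g)+e^{-1}$, where the elementary bound $\rho\log\rho\ge -e^{-1}$ controls the low-density region. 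Since $H(m_0|g)<\infty$ by Assumption~\ref{assum:convergence}, the cross term is finite, $h$ is integrable, and the lemma follows with $f=h$ and $g=-h$.
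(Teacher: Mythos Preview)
Your argument is correct. The approach differs from the paper's in how the product is decomposed: the paper splits $\log\frac{m_t}{e^{-U}}(\hat m_t - m_t)$ into the $\hat m_t$-term and the $m_t$-term and bounds each separately from above and below, invoking Jensen's inequality on the Duhamel formula to control $m_t\log\frac{m_t}{e^{-U}}$; you instead bound each factor in absolute value and multiply. Your route is a bit more elementary (no Jensen), at the cost of a slightly cruder envelope; the paper's route yields marginally sharper bounds but is longer. Both arguments rest on the same ingredients---the two-sided density estimates \eqref{eq:m-density-lower-bound}--\eqref{eq:m-density-upper-bound} and \eqref{eq:boundedness-m-hat-density}, the inequality $\log(x+y)\le\log x+y/x$, the bound $\rho\log\rho\ge -e^{-1}$, and the hypothesis $H(m_0|g)<\infty$---and either is adequate for the lemma's purpose.
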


\begin{proof}
We first deal with the first term \(\log \frac{m_t(x)}{e^{-U(x)}} \hat m_t(x)\).
Using the bounds~\eqref{eq:m-density-lower-bound},
\eqref{eq:m-density-upper-bound} we have
\begin{align*}
\log \frac{m_t(x)}{e^{-U(x)}} \hat m_t(x)
&\geq \log \frac{ (1 - e^{-\alpha t} ) c e^{-U(x)} }{ e^{-U(x)} } \hat m_t (x)
= \log \bigl( (1 - e^{-\alpha t} ) c \bigr) \hat m_t (x) \\
&\geq \log \bigl( (1 - e^{-\alpha s} \bigr) c ) \hat m_t (x)
\geq \log \bigl( (1 - e^{-\alpha s} ) c \bigr) C e^{-U(x)} \eqqcolon g_1(x).
\end{align*}
Here we shrink the constant \(c\) if necessary so that \(c < 1\)
and in the last inequality the coefficient
\(\log \bigl( (1 - e^{-\alpha s} ) c \bigr)\) is negative.
Now we upper bound \(\log \frac{m_t(x)}{e^{-U(x)}} \hat m_t(x)\).
We have
\begin{align*}
\log \frac{m_t(x)}{e^{-U(x)}}
&\leq \log \biggl( e^{-\alpha t} \frac{m_0 (x)}{e^{-U(x)}}
+ \int_0^t \alpha e^{-\alpha (t-s)} \frac{\hat m_s (x)}{e^{-U(x)}}\,ds \biggr) \\
&\leq \log \biggl( e^{-\alpha t} \frac{m_0 (x)}{e^{-U(x)}}
+ C\int_0^t \alpha e^{-\alpha (t-s)}\,ds\biggr)
= \log \biggl( e^{-\alpha t} \frac{m_0 (x)}{e^{-U(x)}} + C(1 - e^{-\alpha t}) \biggr) \\
&\leq \log \bigl( (1 - e^{-\alpha t}) C \bigr)
+ \frac{e^{-\alpha t}}{C (1 - e^{-\alpha t} )} \frac{m_0 (x)}{e^{-U(x)}} \leq \log C + C_{s} \frac{m_0 (x)}{e^{-U(x)}}.
\end{align*}
Here in the third inequality we used the elementary inequality
\(\log (x + y) \leq \log x + \frac{y}{x}\) for real \(x\), \(y\),
and in the last line we maximize over \(t \geq s\)
and set
\(C_{s} = e^{-\alpha s} \bigl(C (1 - e^{-\alpha s} )\bigr)^{-1}\).
Therefore,
\begin{align*}
\log \frac{m_t(x)}{e^{-U(x)}} \hat m_t (x)
&\leq \biggl(\log C + C_{s} \frac{m_0 (x)}{e^{-U(x)}} \biggr) \hat m_t (x)
\leq \biggl(\log C + C_{s} \frac{m_0 (x)}{e^{-U(x)}}\biggr) C e^{-U(x)} \\
&= \log C \cdot C e^{-U(x)} + C_{s}C m_0(x) \eqqcolon f_1(x).
\end{align*}

Now consider the second term \(\log \frac{m_t(x)}{e^{-U(x)}} m_t(x)\).
Applying Jensen's inequality
to the Duhamel formula \eqref{eq:duhamel-density},
we have
\begin{align*}
\log \frac{m_t(x)}{e^{-U(x)}} m_t(x)
&\leq e^{-\alpha t} \log \frac{m_0(x)}{e^{-U(x)}} m_0(x)
+ \int_0^t \alpha e^{-\alpha (t-s)} \log \frac{\hat m_s(x)}{e^{-U(x)}} \hat m_s(x)\,dt \\
&\leq e^{-\alpha t} \log \frac{m_0(x)}{e^{-U(x)}} m_0(x)
+ \int_0^t \alpha e^{-\alpha (t-s)} \log C \cdot \hat m_s (x)\,dt \\
&\leq e^{-\alpha t} \log \frac{m_0(x)}{e^{-U(x)}} m_0(x)
+ \int_0^t \alpha e^{-\alpha (t-s)} \log C \cdot C e^{-U(x)}\,dt \\
&\leq \biggl(\log \frac{m_0(x)}{e^{-U(x)}} m_0(x) \biggr)_+
+ \log C \cdot C e^{-U(x)} \eqqcolon -g_2 (x)
\end{align*}
In the second and third inequality we use consecutively the bound \(\hat m (x) \leq C e^{-U(x)}\) with \(C > 1\).
For the lower bound of the second term we note
\[
\log \frac{m_t(x)}{e^{-U(x)}} m_t(x)
= \log \frac{m_t(x)}{e^{-U(x)}} \cdot \frac{m_t(x)}{e^{-U(x)}} e^{-U(x)} \geq -\frac 1e e^{-U(x)} \eqqcolon -f_2 (x)
\]
The proof is complete by letting
\(f = f_1 + f_2\) and \(g = g_1 + g_2\).
\end{proof}

\begin{proof}{\bf of Proposition~\ref{prop:derivative-value-function}\ }
Thanks to the lemma above,
we can apply the dominated convergence theorem
to differentiate \(t \mapsto V^\sigma(m_t)\) and obtain
\[
\frac{d H (m_t)}{dt} = \alpha \int_{\mathbb R^d}
\bigl(\log m_t (x) + U(x)\bigr) \bigl( \hat m_t (x) - m_t(x) \bigr)\,dx.
\]
For the regular term \(F(m_t)\), by the definition of functional derivative,
we have
\[
F(m_{t+\delta}) - F(m_t)
= \int_{0}^{1} \int_{\mathbb R^d} \frac{\delta F}{\delta m} (m_{t + u\delta}, x)
\bigl( m_{t+\delta} (x) - m_t (x) \bigr)\,dx\,du.
\]
Applying again the dominated convergence theorem,
the derivative reads
\begin{align*}
\frac{dV^\sigma (m_t)}{dt}
&= \alpha \int_{\mathbb R^d} \biggl( \frac{\delta F}{\delta m} (m_t, x) + \frac{\sigma^2}{2} \log m_t(x) + \frac{\sigma^2}{2} U(x) \biggr)
\bigl(\hat m_t (x) - m_t(x) \bigr)\,dx \\
&= \alpha \int_{\mathbb R^d} \biggl(C_t + \frac{\sigma^2}{2}
\log m_t(x) - \frac{\sigma^2}{2} \log \hat m_t(x) \biggr)
\bigl(\hat m_t (x) - m_t(x) \bigr)\,dx \\
&= \alpha \int_{\mathbb R^d} \biggl(\frac{\sigma^2}{2}
\log m_t(x) - \frac{\sigma^2}{2} \log \hat m_t(x) \biggr)
\bigl(\hat m_t (x) - m_t(x) \bigr)\,dx \\
&= - \frac{\alpha\sigma^2}{2} \bigl( H(m_t | \hat m_t) + H(\hat m_t | m_t)\bigr),
\end{align*}
where in the second line we use the first-order condition for \(\hat m_t\) and \(C_t\) is a constant that may depend on \(t\).
\end{proof}

\begin{remark}
The result of Proposition~\ref{prop:derivative-value-function} implies
\begin{itemize}
\item \(\int_0^{+\infty} \bigl( H(m_t | \hat m_t) + H(\hat m_t | m_t) \bigr)\,dt
< +\infty\);

\item The derivative \(\frac{dV^\sigma (m_t)}{dt}\) vanishes
if and only if \(m_t = \hat m_t\), i.e. the dynamics reaches a stationary point.
\end{itemize}
\end{remark}

\begin{proof}{\bf of Theorem~\ref{thm:general-convergence}\ }
Our strategy of proof is as follows.
First we show that, by the (pre-)compactness of the flow
\((m_t)_{t \geq 0} \) in a suitable Wasserstein space,
the flow converges up to an extraction of subsequence.
Then we prove by a monotonicity argument the convergence holds true
without extraction.
Finally we study the convergence of the density functions
and prove the convergence of value function
by the dominated convergence theorem.

According to the Duhamel's formula \eqref{eq:duhamel},
the measure \(m_t\) is a (weighted) linear combination
of the initial value \(m_0\) and the best responses \(\hat m_s\).
Since there exists some \(p' > p\)
such that \(m_0 \in \mathcal P_{p'} (\mathbb R^d)\),
we obtain by the triangle inequality
\begin{multline*}
\Vert m_t\Vert_{p'}^{p'}
\leq e^{-\alpha t} \Vert m_0 \Vert_{p'}^{p'}
+ (1 - e^{-\alpha t} ) \sup_{0 \leq s\leq t} \Vert \hat m_s \Vert_{p'}^{p'} \\
\leq \Vert m_0 \Vert_{p'}^{p'}
+ \sup_{m \in \mathcal P_p (\mathbb R^d)} \Vert \Phi (m) \Vert_{p'}^{p'}
\leq \Vert m_0 \Vert_{p'}^{p'} + C \int_{\mathbb R^d} x^{p'} e^{-U(x)}\,dx.
\end{multline*}
Thus the flow \((m_t)_{t \geq 0} \) in precompact in \(\mathcal P_p (\mathbb R^d)\)
and the set of limit points,
\[
w(m_0) \coloneqq \{ m \in \mathcal P_p (\mathbb R^d) : \exists t_{n} \to +\infty~\text{such that}~m_{t_n} \to m\},
\]
is nonempty.
We now show that \(w(m_0)\) is the singleton \(\{ m^* \}\)
and therefore \(m_t \to m^*\) in \(\mathcal W_p\).
Pick \(m \in w(m_0)\) and let \((t_n)_{n\in\mathbb N}\) be an increasing sequence
such that \(t_n \to +\infty\) and \(m_{t_n} \to m\).
Extracting a subsequence if necessary,
we may suppose \(t_{n+1} - t_n \geq 1\) for \(n \in \mathbb N\).
Proposition~\ref{prop:derivative-value-function} implies
for every \(t\), \(s\) such that \(t > s \geq 0\),
\[
V^\sigma (m_s) - V^\sigma (m_t)
= \int_s^t \bigl( H(m_u | \hat m_u) + H(\hat m_u | m_u) \bigr)\,du.
\]
Consequently,
\begin{align*}
V^\sigma(m_0)
&\geq V^\sigma(m_{t_0}) - V^\sigma(m_{t_n}) \\
&\geq \sum_{k=0}^{n-1} \int_{t_{k}}^{t_{k} + 1}
\bigl( H(m_u | \hat m_u) + H(\hat m_u | m_u) \bigr)\,du \\
&\geq \sum_{k=0}^{n-1} \int_0^1
\bigl( H(m_{t_k+u} | \hat m_{t_k+u}) + H(\hat m_{t_k+u} | m_{t_k+u}) \bigr)\,du.
\end{align*}
By taking \(n\to +\infty\), we obtain
\[
\sum_{k=0}^{n-1} \int_0^1
\bigl( H(m_{t_k+u} | \hat m_{t_k+u}) + H(\hat m_{t_k+u} | m_{t_k+u}) \bigr)\,du
< +\infty.
\]
Therefore,
\begin{align*}
0 &= \lim_{k \to +\infty} \int_0^1
\bigl( H(m_{t_k+u} | \hat m_{t_k+u}) + H(\hat m_{t_k+u} | m_{t_k+u}) \bigr)\,du \\
&\geq \int_0^1 \liminf_{k \to +\infty} \bigl( H(m_{t_k+u} | \hat m_{t_k+u}) + H(\hat m_{t_k+u} | m_{t_k+u}) \bigr)\,du \\
&= \int_0^1 \liminf_{k \to +\infty }
\Bigl( H\bigl(S[u]m_{t_k}\big|\Phi(S[u]m_{t_k})\bigr)
+ H\bigl(\Phi(S[u]m_{t_k})\big|S[u]m_{t_k}\bigr)
\Bigr)\,du \\
&= \int_0^1
\Bigl( H\bigl(S[u]m\big|\Phi(S[u]m)\bigr)
+ H\bigl(\Phi(S[u]m)\big|S[u]m\bigr)
\Bigr)\,du.
\end{align*}
In the first inequality we applied Fatou's lemma,
and in the last equality we used the convergence \(m_{t_k} \to m\),
the continuity of \(S[u]\) and \(\Phi\),
and the joint lower-semicontinuity of \((\mu, \nu) \mapsto H(\mu | \nu)\)
with respect to the weak convergence of measures.
Then we have
\[
H\bigl(S[u]m\big|\Phi(S[u]m)\bigr)
+ H\bigl(\Phi(S[u]m)\big|S[u]m\bigr) = 0
\]
for a.e. \(u \in [0,1]\).
Using again the lower-semicontinuity of relative entropy, we obtain
\[
H \bigl( m \big| \Phi(m) \bigr) + H \bigl( \Phi(m) \big| m \bigr)
\leq \liminf_{u \to 0}
\Bigl( H\bigl(S[u]m\big|\Phi(S[u]m)\bigr)
+ H\bigl(\Phi(S[u]m)\big|S[u]m\bigr)\Bigr) = 0.
\]
That is to say, as a probability measure \(m = \Phi(m) = \hat m\).
By our assumption \(\Phi\) has unique fixed point \(m^*\),
therefore \(m = m^*\) and \(w(m_0)\) is equal to the singleton
\(\{m^*\}\).

Next we show that the convergence of the density function
\(m_t(\cdot) \to m^*(\cdot)\).
Since \(\frac{\sigma^2}{2} H(m^*) \leq V^\sigma (m^*) < +\infty\),
the measure \(m^*\) has a density function, which we denote by \(m^*(\cdot)\).
The Duhamel's formula for density functions \eqref{eq:duhamel-density} yields
\begin{align*}
\bigl|m_t (x) - m^*(x)\bigr|
&\leq e^{-\alpha t} \bigl| m_0(x) - m^*(x) \bigr|
+ \int_0^t \alpha e^{-\alpha (t-s)} \bigl| \hat m_s(x) - m^*(x) \bigr|\,ds \\
&\leq e^{-\alpha t} \bigl| m_0(x) - m^*(x) \bigr|
+ \int_0^t \alpha e^{-\alpha (t-s)} L \mathcal W_p (\hat m_s, m^*) e^{-U(x)}\,ds \\
&= e^{-\alpha t} \bigl| m_0(x) - m^*(x) \bigr|
+ \int_0^{t} \alpha e^{-\alpha s} L \mathcal W_p (\hat m_{t-s}, m^*) e^{-U(x)}\,ds \\
&= e^{-\alpha t} \bigl| m_0(x) - m^*(x) \bigr|
+ \int_0^{+\infty} \mathbf{1}_{s \leq t} \alpha e^{-\alpha s} L \mathcal W_p (\hat m_{t-s}, m^*) e^{-U(x)}\,ds.
\end{align*}
The integrand in the last integral is positive and upper-bounded by the integrable function
\[
\mathbf{1}_{s \leq t} \alpha e^{-\alpha s} L \mathcal W_p (\hat m_{t-s}, m^*) e^{-U(x)}
\leq \alpha L \sup_{t\geq 0}\mathcal W_p (\hat m_{t}, m^*) e^{-\alpha s} e^{-U(x)},
\]
where \(\sup_{t\geq 0}\mathcal W_p (\hat m_{t}, m^*) < +\infty\)
because \((m_t)_{t \geq 0}\)
is a continuous and convergent flow in \(\mathcal P_p\).
Hence by the dominated convergence theorem,
\begin{multline*}
\lim_{t \to +\infty} \int_0^{+\infty} \mathbf{1}_{s \leq t} \alpha e^{-\alpha s} L \mathcal W_p (\hat m_{t-s}, m^*) e^{-U(x)}\,ds
\\
= \int_0^{+\infty} \lim_{t \to +\infty} \mathbf{1}_{s \leq t} \alpha e^{-\alpha s} L \mathcal W_p (\hat m_{t-s}, m^*) e^{-U(x)}\,ds = 0,
\end{multline*}
where
\(\lim_{s \to +\infty}\mathcal{W}_p (\hat m_s, m^*)
= \lim_{s \to +\infty}\mathcal{W}_p \bigl( \Phi(m_s), m^*\bigr)
= 0\)
since \(m_s \to m^*\) and \(\Phi\) is continuous.
As a result, \(m_t (x) \to m^*(x)\) when \(t \to +\infty\).
We finally show the convergence of the value function.
Note that, as in the proof of Proposition~\ref{prop:derivative-value-function},
the entropic term is doubly bounded by integrable functions
\[
-f_2 (x) \leq m_t(x) \log \frac{m_t(x)}{e^{-U (x) }} \leq -g_2(x).
\]
Applying the dominated convergence theorem,
we obtain
\begin{multline*}
\lim_{t\to +\infty} H(m_t)
= \lim_{t\to+\infty} \int_{\mathbb R^d} m_t(x) \log \frac{m_t(x)}{e^{-U (x)}}\,dx
= \int_{\mathbb R^d} \lim_{t\to+\infty} m_t(x) \log \frac{m_t(x)}{e^{-U (x)}}\,dx \\
= \int_{\mathbb R^d} m^*(x) \log \frac{m^*(x)}{e^{-U (x) }}\,dx = H(m^*).
\end{multline*}
The convergence in Wasserstein distance implies already
\(F(m_t) \to F(m^*)\).
Therefore \(\lim_{t\to +\infty} V^\sigma (m_t) = V^\sigma(m^*)\).
\end{proof}

\subsection{Proof of Theorem
\ref{thm:exp}}

We again show some technical results before moving on to the proof
of the theorem.

\begin{lemma}
\label{lemma:partial-derivative-m-hat-zero}
Suppose Assumptions~\ref{assum:wellposedness} and \ref{assum:convergence} holds,
and let \(m_t\) be a solution to \eqref{eq:efp}.
For every \(t > 0\), we have
\begin{multline*}
0 \leq \int_{\mathbb R^d} \hat m_{t+\delta} (x)
\biggl( \log \hat m_{t+\delta} (x) + U(x)
+ \frac{2}{\sigma^2} \frac{\delta F}{\delta m } (m_t, x) \biggr)\,dx \\
- \int_{\mathbb R^d} \hat m_{t} (x) \biggl(\log \hat m_{t} (x) + U(x)
+ \frac{2}{\sigma^2} \frac{\delta F}{\delta m } (m_t, x) \biggr)\,dx
= O(\delta^{1/p})
\end{multline*}
when \(\delta \to 0\).
\end{lemma}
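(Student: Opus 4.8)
The plan is to recognize the two integrals as the values of a single variational functional evaluated at two different measures, and then to exploit the optimality that characterizes $\hat m_t = \Phi(m_t)$. Concretely, for fixed $m$ I would introduce the functional
\begin{multline*}
\mathcal J_m(\mu) \coloneqq \int_{\mathbb R^d} \mu(x)\Bigl(\log \mu(x) + U(x) + \tfrac{2}{\sigma^2}\tfrac{\delta F}{\delta m}(m,x)\Bigr)\,dx \\
= \tfrac{2}{\sigma^2}\Bigl(G(\mu;m) + \tfrac{\sigma^2}{2} H(\mu|g)\Bigr),
\end{multline*}
so that by \eqref{defn:hatm} the best response $\hat m_s = \Phi(m_s)$ is precisely the minimizer of $\mu \mapsto \mathcal J_{m_s}(\mu)$. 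The quantity in the lemma is then exactly $\mathcal J_{m_t}(\hat m_{t+\delta}) - \mathcal J_{m_t}(\hat m_t)$. The first inequality is immediate: since $\hat m_t$ minimizes $\mathcal J_{m_t}$, inserting any competitor — in particular $\hat m_{t+\delta}$ — can only raise the value. Before this I would record that all four numbers $\mathcal J_{m_t}(\hat m_{t+\delta})$, $\mathcal J_{m_t}(\hat m_t)$, $\mathcal J_{m_{t+\delta}}(\hat m_{t+\delta})$, $\mathcal J_{m_{t+\delta}}(\hat m_t)$ are finite: by the density bounds \eqref{eq:boundedness-m-hat-density} each $\hat m$ satisfies $\log\hat m + U \in [\log c, \log C]$ pointwise, and $\frac{\delta F}{\delta m}$ is bounded by $M_F$, so every integrand is dominated by a constant multiple of a probability density.

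For the upper bound $O(\delta^{1/p})$, the key structural observation is that $\mathcal J_m$ depends on $m$ only through the linear term $\frac{2}{\sigma^2}\int \mu\,\frac{\delta F}{\delta m}(m,\cdot)$, and this dependence is Lipschitz \emph{uniformly in $\mu$}: applying \eqref{eq:lipschitz-delta-F-delta-m} with $x'=x$ and integrating against the probability measure $\mu$ gives
\[
\bigl|\mathcal J_{m_t}(\mu) - \mathcal J_{m_{t+\delta}}(\mu)\bigr|
\le \frac{2L_F}{\sigma^2}\,\mathcal W_p(m_t, m_{t+\delta})
\]
for every $\mu$. I would then split the target difference through the companion objective $\mathcal J_{m_{t+\delta}}$, writing $[\mathcal J_{m_t} - \mathcal J_{m_{t+\delta}}](\nu)$ for $\mathcal J_{m_t}(\nu) - \mathcal J_{m_{t+\delta}}(\nu)$:
\begin{multline*}
\mathcal J_{m_t}(\hat m_{t+\delta}) - \mathcal J_{m_t}(\hat m_t)
= \bigl[\mathcal J_{m_t} - \mathcal J_{m_{t+\delta}}\bigr](\hat m_{t+\delta}) \\
+ \bigl[\mathcal J_{m_{t+\delta}}(\hat m_{t+\delta}) - \mathcal J_{m_{t+\delta}}(\hat m_t)\bigr]
+ \bigl[\mathcal J_{m_{t+\delta}} - \mathcal J_{m_t}\bigr](\hat m_t).
\end{multline*}
The middle bracket is $\le 0$ because $\hat m_{t+\delta}$ minimizes $\mathcal J_{m_{t+\delta}}$, while the two outer brackets are each bounded by $\frac{2L_F}{\sigma^2}\mathcal W_p(m_t,m_{t+\delta})$ by the uniform estimate above. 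Hence the whole difference is at most $\frac{4L_F}{\sigma^2}\mathcal W_p(m_t,m_{t+\delta})$.

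It remains to insert the time-regularity of the flow. Since the solution $(m_t)$ is a fixed point of the Duhamel map $\mathbf T$, the estimate \eqref{eq:wasserstein-time-regularity} yields $\mathcal W_p(m_{t+\delta}, m_t) \le C\delta^{1/p}$, whence the difference is $\le \frac{4 L_F C}{\sigma^2}\delta^{1/p} = O(\delta^{1/p})$, which finishes the proof. I expect the only genuinely delicate point to be the choice of the cross-comparison in the three-term split: one cannot bound $\mathcal J_{m_t}(\hat m_{t+\delta}) - \mathcal J_{m_t}(\hat m_t)$ directly from the $\mathcal W_p$-closeness of $\hat m_{t+\delta}$ and $\hat m_t$ (which is available from Corollary~\ref{corollary:wasserstein-lipschitz-phi}), because the entropy term makes $\mu \mapsto \mathcal J_{m_t}(\mu)$ far from Lipschitz in $\mathcal W_p$; routing the estimate through the optimality of $\hat m_{t+\delta}$ for the \emph{shifted} objective $\mathcal J_{m_{t+\delta}}$ is exactly what lets us avoid ever estimating the entropy directly.
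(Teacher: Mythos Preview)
Your argument is correct and takes a genuinely different route from the paper for the upper bound. The paper splits the quantity as \(I = I_1 + I_2\) with \(I_1 = \int\bigl(\tfrac{2}{\sigma^2}\tfrac{\delta F}{\delta m}(m_t,\cdot) + \log\hat m_t + U\bigr)(\hat m_{t+\delta} - \hat m_t) = 0\) (the bracket is constant by the first-order condition) and \(I_2 = \int(\log\hat m_{t+\delta} - \log\hat m_t)\,\hat m_{t+\delta} = H(\hat m_{t+\delta}\,|\,\hat m_t)\); nonnegativity is then immediate, and for the upper bound the paper estimates \(I_2\) \emph{pointwise}, bounding \(\lvert\log\hat m_{t+\delta} - \log\hat m_t\rvert\,\hat m_{t+\delta}\) via the density sandwich \eqref{eq:boundedness-m-hat-density} and the density Lipschitz bound \eqref{eq:continuity-m-hat-density}, then invoking \eqref{eq:wasserstein-time-regularity}. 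Your three-term swap through the shifted objective \(\mathcal J_{m_{t+\delta}}\) is more structural: it never touches the entropy term and needs only the \(\mathcal W_p\)-Lipschitz continuity of \(m\mapsto\tfrac{\delta F}{\delta m}(m,\cdot)\) together with \eqref{eq:wasserstein-time-regularity}, bypassing Proposition~\ref{prop:existence-uniqueness-regularity-m-hat} entirely. The paper's computation buys slightly more---it identifies the quantity as \(H(\hat m_{t+\delta}\,|\,\hat m_t)\) and bounds that relative entropy itself---whereas your route is shorter and isolates exactly the variational mechanism (optimality at two nearby objectives) that makes the lemma work.
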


\begin{proof}
Denote the quantity to bound by \(I\).
We write it as the sum of the following two terms:
\begin{align*}
I &= I_1 + I_2, \\
I_1 &= \int_{\mathbb R^d} \biggl( \frac{2}{\sigma^2} \frac{\delta F}{\delta m } (m_t, x)
+ \log \hat m_t (x) + U (x) \biggr)
\bigl( \hat m_{t+\delta} (x) - \hat m_{t} (x) \bigr)\,dx
= 0, \\
I_2 &= \int_{\mathbb R^d}
\bigl( \log \hat m_{t+\delta} (x) - \log \hat m_{t} (x) \bigr)
\hat m_{t+\delta} (x)\,dx.
\end{align*}
The term \(I_1\) is zero
because \(\frac{2}{\sigma^2} \frac{\delta F}{\delta m } (m_t, x) + \log \hat m_t (x)\)
is constant by the first-order condition.
On the other hand, we have
\(I_2 = H(\hat m_{t+\delta} | \hat m_t) \geq 0 \).
Let us bound the other side.
Since \(\hat m_s (x) \geq c e^{-U(x)} \) holds for every \(s \geq 0\),
we have
\begin{align*}
\bigl| \log \hat m_{t+\delta} (x) - \log \hat m_t (x) \bigr| \hat m_{t+\delta} (x)
&\leq \frac{\hat m(x)}
{\min \bigl\{ \hat m_{t+\delta}(x), \hat m_t(x) \bigr\} }
\bigl(m_{t+\delta}(x) - \hat m_t(x)\bigr) \\
&\leq C \bigl(\hat m_{t+\delta}(x) - \hat m_t(x)\bigr) \\\
&\leq C e^{-U(x)} \mathcal W_p(m_{t+\delta}, m_t) \\
&\leq C e^{-U(x)} \delta^{1/p}.
\end{align*}
Here we have used \(\log \frac xy \leq \frac {|x-y|}{\min\{x,y\}}\) in the first inequality,
\eqref{eq:continuity-m-hat-density} in the second inequality,
and \eqref{eq:wasserstein-time-regularity} in the last inequality.
\end{proof}

We need the following notion to treat the possibly non-differentiable
relative entropy.

\begin{definition}
For a real function \(f : (t-\varepsilon, t+\varepsilon) \to \mathbb R\)
defined on a neighborhood of \(t\),
the set of its upper-differentials at \(t\) is
\[
D^+ f (t) \coloneqq \biggl\{ p \in \mathbb R :
\limsup_{s \to t} \frac{f(s) - f(t) - p (s - t)} {|s - t|} \leq 0 \biggr\}.
\]
Lower-differentials are defined as \(D^- f (t) \coloneqq - D^+ (-f) (t)\).
\end{definition}

\begin{lemma}
\label{lemma:lower-differential-integration}
Let \(f : [a, b] \to \mathbb R\) be a function defined on a closed interval, continuous on its two ends \(a\) and \(b\).
If \(f\) has nonnegative lower-differentials on \((a, b)\),
i.e. for every \(a < t < b\)
there exists \(p_t \in D^- f (t)\) with \(p_t \geq 0\),
then \(f(b) \geq f(a)\).
\end{lemma}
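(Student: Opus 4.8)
The plan is to reduce to the case of strictly positive lower-differentials and then run the classical ``increasing function'' contradiction argument. First I would observe that it suffices to prove the claim after tilting: for \(\varepsilon > 0\) set \(g_\varepsilon(t) \coloneqq f(t) + \varepsilon t\). Since adding the smooth function \(t \mapsto \varepsilon t\) shifts lower-differentials by its derivative, we have \(p_t + \varepsilon \in D^- g_\varepsilon(t)\) with \(p_t + \varepsilon \geq \varepsilon > 0\) for every \(t \in (a,b)\), and \(g_\varepsilon\) is still continuous at the two endpoints. If I can show \(g_\varepsilon(b) \geq g_\varepsilon(a)\), that is \(f(b) \geq f(a) - \varepsilon(b-a)\), then letting \(\varepsilon \downarrow 0\) yields \(f(b) \geq f(a)\). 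So the whole difficulty is concentrated in the strictly positive case.

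For the strictly positive case I would argue by contradiction. Suppose \(g_\varepsilon(b) < g_\varepsilon(a)\) and fix an intermediate level \(\lambda \in \bigl(g_\varepsilon(b), g_\varepsilon(a)\bigr)\). Consider the last time the level \(\lambda\) is reached, \(c \coloneqq \sup\{t \in [a,b] : g_\varepsilon(t) \geq \lambda\}\). Continuity at \(a\), where \(g_\varepsilon(a) > \lambda\), forces \(c > a\), and continuity at \(b\), where \(g_\varepsilon(b) < \lambda\), forces \(c < b\); hence \(c\) is an interior point where the hypothesis applies. The crucial consequence of \(q_c \coloneqq p_c + \varepsilon \in D^- g_\varepsilon(c)\) with \(q_c > 0\) is that the lower right Dini derivative is strictly positive, \(\liminf_{s \to c^+} \frac{g_\varepsilon(s) - g_\varepsilon(c)}{s - c} \geq q_c > 0\), so there exist points \(s > c\) arbitrarily close to \(c\) with \(g_\varepsilon(s) > g_\varepsilon(c)\). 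Combined with \(g_\varepsilon(c) \geq \lambda\) this produces some \(s > c\) with \(g_\varepsilon(s) \geq \lambda\), contradicting the definition of \(c\) as a supremum.

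The step that deserves the most care — and that I expect to be the real obstacle — is justifying \(g_\varepsilon(c) \geq \lambda\) at the supremum point. The definition of \(c\) only supplies points \(t_n \uparrow c\) with \(g_\varepsilon(t_n) \geq \lambda\), and passing this inequality to the limit requires that \(\{g_\varepsilon \geq \lambda\}\) be closed, i.e. that \(g_\varepsilon\) be (upper semi)continuous at \(c\). The mere existence of a lower-differential yields only \emph{lower} semicontinuity, so endpoint continuity alone is genuinely delicate here; the argument leans on \(f\) being continuous at the interior point \(c\) as well. This is precisely the regularity available in the intended application, where the lemma is applied to the continuous map \(t \mapsto V^\sigma(m_t)\) — continuity of the flow in \(\mathcal W_p\) together with continuity of \(V^\sigma\) along it. With that continuity in hand the contradiction closes, giving \(g_\varepsilon(b) \geq g_\varepsilon(a)\) for every \(\varepsilon > 0\) and hence \(f(b) \geq f(a)\).
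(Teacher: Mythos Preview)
Your tilt-then-last-crossing argument is genuinely different from the paper's, which instead threads a finite chain \(a<x_1<\cdots<x_n<b\) with \(f(x_{i+1})-f(x_i)\geq-\varepsilon(x_{i+1}-x_i)\) by a compactness claim, sums the increments, and sends \(\varepsilon\to0\) via endpoint continuity. You are right that the step \(g_\varepsilon(c)\geq\lambda\) is the obstacle and that the existence of a lower-differential only supplies \emph{lower} semicontinuity at \(c\). In fact the gap is unfillable under the stated hypotheses, because the lemma as written is false: on \([0,1]\) take \(f\equiv1\) on \([0,\tfrac12)\) and \(f\equiv0\) on \([\tfrac12,1]\). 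This \(f\) is continuous at both endpoints, and \(0\in D^-f(t)\) for every \(t\in(0,1)\) (at \(t=\tfrac12\) the left quotient tends to \(+\infty\) while the right quotient vanishes, so the \(\liminf\) with \(p=0\) equals \(0\)); yet \(f(1)<f(0)\). The paper's chain cannot cross \(\tfrac12\) without absorbing a drop of size \(1\), so its proof has the same defect on this example.

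What rescues both arguments is exactly the extra regularity you point to: continuity (upper semicontinuity already suffices for your supremum step) on the open interval. In the paper's applications the relevant functions \(t\mapsto V^\sigma(m_t)\) and \(t\mapsto H(m_t\,|\,\hat m_t)\) are shown to be continuous on \((0,\infty)\), so it is really the lemma with the stronger hypothesis ``\(f\) continuous on \([a,b]\)'' that is being used. Under that hypothesis your proof is correct and, to my taste, cleaner than the paper's compactness sketch; your self-diagnosis of the weak point is accurate, and your proposed fix is the right one.
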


\begin{proof}
Since the interval \([a,b]\) is compact,
for every \(\varepsilon > 0\),
we can find a finite sequence \(a < x_1 < \ldots < x_n < b\)
such that \(f(x_{i+1}) - f(x_i) \geq - \varepsilon (x_{i+1} - x_i )\)
with \(x_1 < a + \varepsilon\) and \(b < x_n + \varepsilon\).
Thus we have \(f(x_n) - f(x_1) \geq - \varepsilon (x_n - x_1) \).
We conclude by taking the limit \(\varepsilon \to 0\).
\end{proof}

Next we calculate the upper-differential of the relative entropy
\(t \mapsto H(m_t | \hat m_t)\).

\begin{proposition}
\label{prop:entropy-exponential}
Let Assumptions~\ref{assum:wellposedness}, \ref{assum:convergence}
and \ref{assum:convex} hold,
and let \((m_t)_{t \geq 0}\) be a solution to \eqref{eq:efp}
in \(\mathcal W_p\).
Then the relative entropy \(H : t \mapsto H(m_t | \hat m_t)\) is continuous
on \([0, +\infty)\),
and for every \(t > 0\),
the set of upper differentials \(D^+ H(t)\) is non-empty
and there exists \(p_t \in D^+ H (t)\) such that
\[
p_t \leq - \alpha \bigl( H(m_t | \hat m_t) + H(\hat m_t | m_t) \bigr).
\]
\end{proposition}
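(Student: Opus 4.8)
The plan is to build on the formal identity
\[
\frac{d}{dt}H(m_t|\hat m_t)
= -\alpha\bigl(H(m_t|\hat m_t)+H(\hat m_t|m_t)\bigr)
- \frac{2\alpha}{\sigma^2}\int_{\mathbb R^d}\int_{\mathbb R^d}\frac{\delta^2 F}{\delta m^2}(m_t,x,y)(\hat m_t-m_t)(x)(\hat m_t-m_t)(y)\,dx\,dy,
\]
whose last term is nonpositive by the convexity of \(F\). Since \(t\mapsto\hat m_t\) is only \(1/p\)-Hölder (Corollary~\ref{corollary:wasserstein-lipschitz-phi}), I would not attempt to prove \(H\) is differentiable but instead produce the upper differential by a one-sided estimate. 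Writing \(H(t)=H(m_t|\hat m_t)\), I split the increment as
\[
H(t+\delta)-H(t)
= \underbrace{\bigl[H(m_{t+\delta}|\hat m_{t+\delta})-H(m_t|\hat m_{t+\delta})\bigr]}_{A(\delta)}
+ \underbrace{\bigl[H(m_t|\hat m_{t+\delta})-H(m_t|\hat m_t)\bigr]}_{B(\delta)},
\]
separating the variation of the first argument (\(A\)) from that of the reference measure (\(B\)).

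For \(A(\delta)\) I would use that \(m\mapsto H(m|\mu)\) has linear derivative \(\log\frac{m}{\mu}+1\), so \(A(\delta)=\int_0^1\int\log\frac{m_{t,s}(x)}{\hat m_{t+\delta}(x)}(m_{t+\delta}-m_t)(dx)\,ds\) with \(m_{t,s}=(1-s)m_t+s\,m_{t+\delta}\), the constant \(1\) dropping because \(m_{t+\delta}-m_t\) has zero mass. Dividing by \(\delta\), the pointwise equation \eqref{eq:efp-pointwise} gives \(\delta^{-1}(m_{t+\delta}-m_t)\to\alpha(\hat m_t-m_t)\) and the integrand tends to \(\log\frac{m_t}{\hat m_t}\); passing to the limit (justified by dominated convergence with the integrable envelopes of Lemma~\ref{lemma:integrability-derivative-of-entropy} and the density bounds \eqref{eq:m-density-lower-bound} and \eqref{eq:m-density-upper-bound}) yields the two-sided limit \(\delta^{-1}A(\delta)\to-\alpha\bigl(H(m_t|\hat m_t)+H(\hat m_t|m_t)\bigr)\).

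The term \(B(\delta)\) is where the main obstacle lies. Inserting the Gibbs form \(\log\hat m_s=-\frac{2}{\sigma^2}\frac{\delta F}{\delta m}(m_s,\cdot)-U-\log Z_s\) and eliminating the normalizing constants through Lemma~\ref{lemma:partial-derivative-m-hat-zero}, which after the same substitution states that the quantity
\(X(\delta)\coloneqq\log\frac{Z_t}{Z_{t+\delta}}+\frac{2}{\sigma^2}\int\hat m_{t+\delta}\bigl(\frac{\delta F}{\delta m}(m_t,\cdot)-\frac{\delta F}{\delta m}(m_{t+\delta},\cdot)\bigr)\,dx\) satisfies \(0\le X(\delta)\le O(\delta^{1/p})\), I obtain the key identity
\[
B(\delta)=-X(\delta)+\frac{2}{\sigma^2}\int_{\mathbb R^d}(m_t-\hat m_{t+\delta})(x)\Bigl(\frac{\delta F}{\delta m}(m_{t+\delta},x)-\frac{\delta F}{\delta m}(m_t,x)\Bigr)\,dx.
\]
The crucial point is that \(X(\delta)/\delta\) is \emph{not} controlled — it may blow up like \(\delta^{1/p-1}\) when \(p>1\), which is precisely why differentiability fails — but its sign is favorable, \(-X(\delta)/\delta\le0\) for \(\delta>0\). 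For the remaining integral I would expand \(\frac{\delta F}{\delta m}(m_{t+\delta},\cdot)-\frac{\delta F}{\delta m}(m_t,\cdot)=\int_0^1\int\frac{\delta^2 F}{\delta m^2}(m_{t,r},\cdot,y)(m_{t+\delta}-m_t)(dy)\,dr\); dividing by \(\delta\) and using \eqref{eq:efp-pointwise} together with the boundedness and continuity of \(\frac{\delta^2 F}{\delta m^2}\) (Assumption~\ref{assum:convex}), it converges to \(-\frac{2\alpha}{\sigma^2}Q_t\), where \(Q_t\coloneqq\int\int\frac{\delta^2 F}{\delta m^2}(m_t,x,y)(\hat m_t-m_t)(x)(\hat m_t-m_t)(y)\,dx\,dy\ge0\) by convexity of \(F\) on zero-mass signed measures. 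Hence \(\limsup_{\delta\to0^+}\delta^{-1}B(\delta)\le-\frac{2\alpha}{\sigma^2}Q_t\le0\).

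Assembling the two pieces, set \(q\coloneqq-\alpha\bigl(H(m_t|\hat m_t)+H(\hat m_t|m_t)\bigr)-\frac{2\alpha}{\sigma^2}Q_t\). The forward estimate gives \(\limsup_{\delta\to0^+}\delta^{-1}\bigl(H(t+\delta)-H(t)\bigr)\le q\). Repeating the computation with increment \(-\delta\) — the sign of \(X\) stays favorable because \(X\ge0\) regardless of the sign of the increment — yields \(\limsup_{\delta\to0^+}\delta^{-1}\bigl(H(t-\delta)-H(t)\bigr)\le-q\), i.e. \(\liminf_{\delta\to0^+}\delta^{-1}\bigl(H(t)-H(t-\delta)\bigr)\ge q\). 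These two inequalities say exactly that \(q\in D^+H(t)\), and since \(Q_t\ge0\) we get \(q\le-\alpha\bigl(H(m_t|\hat m_t)+H(\hat m_t|m_t)\bigr)\), which is the desired \(p_t\). Continuity of \(t\mapsto H(m_t|\hat m_t)\) on \([0,+\infty)\) I would obtain from the \(\mathcal W_p\)-continuity of the flow, the Hölder continuity of \(\Phi\) (Corollary~\ref{corollary:wasserstein-lipschitz-phi}), the uniform density bounds, and dominated convergence, exactly as in the proof of Theorem~\ref{thm:general-convergence}. The essential difficulty throughout is the poor \(1/p\)-Hölder regularity of \(t\mapsto\hat m_t\), which I circumvent by keeping the uncontrolled quantity \(X(\delta)\) on the favorable side of a one-sided inequality rather than differentiating through it.
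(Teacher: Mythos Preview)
Your proposal is correct and rests on the same essential idea as the paper: the non-differentiable contribution coming from the variation of \(\hat m_t\) is exactly the quantity of Lemma~\ref{lemma:partial-derivative-m-hat-zero} (your \(X(\delta)=H(\hat m_{t+\delta}|\hat m_t)\ge 0\)), and one keeps it on the favorable side of a one-sided inequality rather than differentiating through it. The only difference is organizational. The paper decomposes \(H(m_t|\hat m_t)=H_{1,t}-H_{2,t}\) with
\[
H_{1,t}=\int m_t\Bigl(\log m_t+U+\tfrac{2}{\sigma^2}\tfrac{\delta F}{\delta m}(m_t,\cdot)\Bigr),\qquad
H_{2,t}=\int \hat m_t\Bigl(\log \hat m_t+U+\tfrac{2}{\sigma^2}\tfrac{\delta F}{\delta m}(m_t,\cdot)\Bigr),
\]
so that \(H_{2,t}\) is the minimum of the Gibbs free energy and the Lemma~\ref{lemma:partial-derivative-m-hat-zero} term appears directly as \(H_{2,t+\delta}-H_{2,t}\) (with potential frozen at \(m_t\)). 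You instead use the chain-rule split \(A(\delta)+B(\delta)\) in the two arguments of \(H(\cdot|\cdot)\) and then extract \(X(\delta)\) from \(B\) via the Gibbs formula. After the algebra both routes yield the same identity \(H(t+\delta)-H(t)=p_t\delta-X(\delta)+o(\delta)\) with \(X(\delta)\ge 0\), and both conclude \(p_t\in D^+H(t)\). Your treatment of the backward increment (\(\delta<0\)) is in fact more explicit than the paper's, which handles it rather tersely; your observation that \(X(\delta)\ge 0\) regardless of the sign of \(\delta\) is exactly what makes the two-sided upper-differential work.
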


\begin{proof}
Fix \(t > 0\). The relative entropy reads
\begin{align*}
H_t \coloneqq H(m_t|\hat m_t)
&= \int_{\mathbb R^d} m_t(x) \bigl(\log m_t (x) - \log \hat m_t (x)\bigr)\,dx \\
&= \int_{\mathbb R^d} m_t(x) \biggl(\log m_t (x) + U(x) + \frac{2}{\sigma^2} \frac{\delta F}{\delta m} (m_t,x) \biggr)\,dx \\
&\quad - \int_{\mathbb R^d} m_t(x) \biggl(\log \hat m_t (x) + U(x) + \frac{2}{\sigma^2} \frac{\delta F}{\delta m} (m_t,x) \biggr)\,dx \\
&= \int_{\mathbb R^d} m_t(x) \biggl(\log m_t (x) + U(x) + \frac{2}{\sigma^2} \frac{\delta F}{\delta m} (m_t,x) \biggr)\,dx \\
&\quad - \int_{\mathbb R^d} \hat m_t(x) \biggl(\log \hat m_t (x) + U(x) + \frac{2}{\sigma^2} \frac{\delta F}{\delta m} (m_t,x) \biggr)\,dx \\
&\eqqcolon H_{1,t} - H_{2,t}.
\end{align*}
In the second equality we can separate the integral into two parts
because the integrand of the second term
\(m_t(x) \bigl(\log \hat m_t (x) + U(x) + \frac{2}{\sigma^2} \frac{\delta F}{\delta m} (m,x) \bigr)\)
is integrable as it is constant by the first-order condition.
For the same reason, in the fourth equality
we can replace \(m_t\) by \(\hat m_t\) in the second term,
as we are integrating against a constant
and \(m_t, \hat m_t\) have the same total mass \(1\).

Now we consider the difference \(H_{t+\delta} - H_t = (H_{1,t+\delta} - H_{1,t}) - (H_{2, t+\delta} - H_{2,t})\).
For the first part we have
\begin{align*}
H_{1,t+\delta} - H_{1,t} &= H(m_{t+\delta}) - H(m_t) \\
&\quad+ \frac{2}{\sigma^2}\int_{\mathbb R^d}
\biggl( m_{t+\delta}(x) \frac{\delta F}{\delta m} (m_{t+\delta},x) - m_t(x) \frac{\delta F}{\delta m} (m_t,x) \biggr)\,dx. \\
&= \delta \int_{\mathbb R^d} \alpha \log \frac{m_t(x)}{e^{-U(x)}} (\hat m_t (x) - m_t(x))\,dx \\
&\quad+ \frac{2 \delta}{\sigma^2}\int_{\mathbb R^d}
\alpha \bigl(\hat m_t (x) - m_t(x)\bigr) \frac{\delta F}{\delta m} (m_{t},x)\,dx \\
&\quad+ \frac{2\delta}{\sigma^2}\int_{\mathbb R^d}\int_{\mathbb R^d} m_t(x) \frac{\delta^2 F}{\delta m^2} (m_t, x, y) \alpha \bigl(\hat m_t (y) - m_t(y)\bigr)\,dx\,dy + o(\delta) \\
&= \alpha \delta \int_{\mathbb R^d} \biggl(\log m_t(x) + U(x) + \frac{2}{\sigma^2} \frac{\delta F}{\delta m} (m_t, x) \biggr)
\bigl(\hat m_t(x) - m_t(x)\bigr)\,dx \\
&\quad+ \frac{2\delta}{\sigma^2}\int_{\mathbb R^d}\int_{\mathbb R^d} m_t(x) \frac{\delta^2 F}{\delta m^2} (m_t, x, y) \alpha
\bigl(\hat m_t (y) - m_t(y)\bigr)\,dx\,dy + o(\delta).
\end{align*}
by Lemma~\ref{lemma:integrability-derivative-of-entropy} and dominated convergence theorem.

Next we calculate the second part:
\begin{align*}
H_{2,t+\delta} - H_{2,t} &= \frac{2}{\sigma^2} \int_{\mathbb R^d} \hat m_{t+\delta} (x)
\biggl(\frac{\delta F}{\delta m} (m_{t+\delta},x) - \frac{\delta F}{\delta m} (m_{t},x)\biggr)\,dx \\
&\quad + \biggl[ \int_{\mathbb R^d} \hat m_{t+\delta} (x)
\biggl(\log \hat m_{t+\delta} (x) + U(x) + \frac{2}{\sigma^2} \frac{\delta F}{\delta m } (m_t, x)\biggr)\,dx \\
&\hspace{3em} - \int_{\mathbb R^d} \hat m_{t} (x)
\biggl(\log \hat m_{t} (x) + U(x) + \frac{2}{\sigma^2} \frac{\delta F}{\delta m } (m_t, x) \biggr)\,dx \biggl]
\end{align*}
For the first difference we use the expansion
\(m_{t + \delta} - m_t = \alpha \delta\,(\hat m_t - m_t ) + o(\delta)\)
and apply the dominated convergence theorem to obtain
\begin{align*}
&\frac{2}{\sigma^2} \int_{\mathbb R^d} \hat m_{t+\delta} (x)
\biggl(\frac{\delta F}{\delta m} (m_{t+\delta},x) - \frac{\delta F}{\delta m} (m_{t},x)\biggr)\,dx \\
&= \frac{2 \alpha \delta}{\sigma^2} \int_{\mathbb R^d} \int_{\mathbb R^d} \hat m_{t+\delta} (x) \frac{\delta^2 F}{\delta m^2} (m_t, x, y) \alpha
\bigl(\hat m_t (y) - m_t(y) \bigr)\,dx\,dy + o(\delta) \\
&= \frac{2 \alpha \delta}{\sigma^2} \int_{\mathbb R^d} \int_{\mathbb R^d} \hat m_{t} (x) \frac{\delta^2 F}{\delta m^2} (m_t, x, y) \alpha
\bigl(\hat m_t (y) - m_t(y)\bigr)\,dx\,dy + o(\delta)
\end{align*}
and the second difference is already treated in Lemma~\ref{lemma:partial-derivative-m-hat-zero}.
Summing up, we have
\[
H_{2,t+\delta} - H_{2,t} -
\frac{2 \alpha \delta}{\sigma^2} \iint \hat m_{t} (x) \frac{\delta^2 F}{\delta m^2} (m_t, x, y) \alpha
\bigl(\hat m_t (y) - m_t(y)\bigr)\,dx\,dy
\geq o(\delta)
\]
We have equally the bound on the other side: \(H_{2,t+\delta} - H_{2,t} \leq O(\delta^{1/p})\).

Putting everything together, we have
\begin{align*}
H_{t+\delta} - H_{t} &\leq \alpha \delta \int
\biggl(\log m_t(x) + U(x) + \frac{2}{\sigma^2} \frac{\delta F}{\delta m} (m_t, x) \biggr)
\bigl(\hat m_t(x) - m_t(x)\bigr)\,dx \\
&\quad - \frac{2\alpha\delta}{\sigma^2} \iint \frac{\delta^2 F}{\delta m^2} (m_t, x, y)
\bigl(\hat m_t (x) - m_t(x)\bigr) \bigl(\hat m_t (y) - m_t(y)\bigr)\,dx\,dy + o(\delta) \\
&=\alpha \delta \int_{\mathbb R^d} \bigl(\log m_t(x) - \log \hat m_t(x) \bigr)
\bigl(\hat m_t(x) - m_t(x)\bigr)\,dx \\
&\quad - \frac{2\alpha\delta}{\sigma^2} \iint \frac{\delta^2 F}{\delta m^2} (m_t, x, y)
\bigl(\hat m_t (x) - m_t(x)\bigr) \bigl(\hat m_t (y) - m_t(y)\bigr)\,dx\,dy + o(\delta) \\
&=- \alpha \delta \bigl( H(m_t | \hat m_t) + H(\hat m_t | m_t) \bigr) \\
&\quad - \frac{2\alpha\delta}{\sigma^2} \iint \frac{\delta^2 F}{\delta m^2} (m_t, x, y)
\bigl(\hat m_t (x) - m_t(x)\bigr) \bigl(\hat m_t (y) - m_t(y)\bigr)\,dx\,dy + o(\delta).
\end{align*}
By the convexity of \(F\).
the double integral is positive, that is to say
\[
\iint \frac{\delta^2 F}{\delta m^2} (m_t, x, y)
\bigl(\hat m_t (x) - m_t(x)\bigr) \bigl(\hat m_t (y) - m_t(y)\bigr)\,dx\,dy \geq 0.
\]
For the other side we have \(H_{t+\delta} - H_{t} \geq O(\delta^{1/p})\).
Thus \(H_t\) is continuous and \(p_t\) defined by
\begin{multline*}
p_t = - \alpha \bigl( H(m_t | \hat m_t) + H(\hat m_t | m_t) \bigr) \\
- \frac{2\alpha}{\sigma^2}
\iint \frac{\delta^2 F}{\delta m^2} (m_t, x, y) \bigl(\hat m_t (x) - m_t(x)\bigr) \bigl(\hat m_t (y) - m_t(y)\bigr)\,dx\,dy.
\end{multline*}
is an upper-differential of \(H(m_t | \hat m_t)\)
and satisfies \(p_t
\leq - \alpha \bigl( H(m_t|\hat m_t) + H(\hat m_t | m_t) \bigr)\).
\end{proof}

\begin{proof}{\bf of Theorem~\ref{thm:exp}\ }
By Proposition~\ref{prop:derivative-value-function}, we know
\[
\frac{dV^\sigma (m_t)}{dt}
= -\alpha \frac{\sigma^2}{2} \bigl( H(m_t|\hat m_t) + H(\hat m_t|m_t) \bigr).
\]
By Proposition~\ref{prop:entropy-exponential},
we find for every \(t > 0\) an upper-differential
\(p_t \in D^+ H(m_t|\hat m_t)\) such that
\[
p_t \leq -\alpha \bigl( H(m_t|\hat m_t) + H(\hat m_t|m_t) \bigr).
\]
Therefore,
\[
\frac{dV^{\sigma} (m_t)}{dt} \geq \frac{\sigma^2}{2} p_t.
\]
Since \(\frac{dV^\sigma (m_t)}{dt} - p_t\) is a lower-differential of
\(V^\sigma(m_t) - H(m_t|\hat m_t)\),
we apply Lemma~\ref{lemma:lower-differential-integration}
to the finite interval \([s, u]\) and obtain
\begin{equation}
\label{eq:final-step-expoential-convergence}
V^\sigma(m_u) - V^{\sigma}(m_s) \geq \frac{\sigma^2}{2}
\bigl( H(m_u|\hat m_u) - H(m_s | \hat m_s) \bigr).
\end{equation}
It follows from Proposition~\ref{prop:entropy-exponential}
and Lemma~\ref{lemma:lower-differential-integration}
that \(t \mapsto e^{\alpha t} H(m_t | \hat m_t)\) is non-increasing,
and therefore,
\[
H(m_t|\hat m_t) \leq H(m_0|\hat m_0) e^{-\alpha t}.
\]
Taking the limit \(u \to +\infty\)
in \eqref{eq:final-step-expoential-convergence},
we obtain
\[
\inf V^{\sigma} - V^{\sigma}(m_s)
\geq 0 - \frac{\sigma^2}{2} H(m_s|\hat m_s)
\geq - \frac{\sigma^2}{2} H(m_s|\hat m_s) e^{-\alpha t},
\]
and the proof is complete.
\end{proof}

\section{Conclusion}

In this paper we proposed the entropic fictitious play algorithm
that solves the mean field optimization problem regularized by relative entropy.
The algorithm is composed of an inner and an outer iteration,
sharing the same flavor with the particle dual average algorithm
studied in \cite{Nitanda2021ParticleDA},
but possibly allows easier implementations.
Under some general assumptions we rigorously prove the exponential convergence
for the outer iteration and identify the convergence rate
as the learning rate \(\alpha\).
The inner iteration involves sampling a Gibbs measure
and many Monte Carlo algorithms have been extensively studied for this task,
so errors from the inner iterations are not considered in this paper.
For further research directions, we may look into the discrete-time scheme
to better understand the efficiency and the bias of the algorithm,
and may also study the annealed entropic fictitious play
(i.e., \(\sigma \to 0\) when \(t \to +\infty\)) as well.

\bibliography{references}

\end{document}